\documentclass[11pt,a4paper]{article}

\oddsidemargin .5cm \evensidemargin .5cm \textwidth=14cm
\textheight=21truecm \unitlength=1cm
\parskip 3mm
\baselineskip 15pt

\usepackage{amsmath,amsfonts,amssymb,latexsym,graphics,epsfig,url}
\usepackage{hyperref}
\usepackage{color}
\usepackage{amsthm}
\usepackage{latexsym}
\usepackage{tikz}
\usetikzlibrary{arrows}

\usepackage[english]{babel}
\usepackage{epsfig}
\usepackage{graphicx}

\newcommand{\executeiffilenewer}[3]{%
\ifnum\pdfstrcmp{\pdffilemoddate{#1}}%
{\pdffilemoddate{#2}}>0%
{\immediate\write18{#3}}\fi%
}
\newcommand{%
\executeiffilenewer{.svg}{.pdf}%
{inkscape -z -D --file=.svg %
--export-pdf=.pdf --export-latex}%
\input{.pdf_tex}%
}[1]{%
\executeiffilenewer{#1.svg}{#1.pdf}%
{inkscape -z -D --file=#1.svg %
--export-pdf=#1.pdf --export-latex}%
\input{#1.pdf_tex}%
}

\theoremstyle{plain}
\newtheorem{proposition}{Proposition}[section]
\newtheorem{theorem}{Theorem}[section]
\newtheorem{lemma}{Lemma}[section]
\newtheorem{corollary}{Corollary}[section]

\newtheorem{conjecture}{Conjecture}[section]

\newtheorem{remark}{Remark}[section]

\newtheorem{problem}{Problem}[section]

\input amssym.def
\newsymbol\rtimes 226F
\newfont{\nset}{msbm10}

\def\e{\mbox{\boldmath $e$}}

\def\m{\mbox{\boldmath $m$}}

\def\vecv{{\mbox {\boldmath $v$}}}

\def\0{\mbox{\boldmath $0$}}
\def\1{\mbox{\boldmath $1$}}

\def\mod{\mathop{\rm mod }\nolimits}

%\def\tr{\mathop{\rm tr }\nolimits}
%\def\ev{\mathop{\rm ev }\nolimits}

%%%%%%%%%ERNEST%%%%%%%%%%%%%%%%%%%%%%%%%%%%

\DeclareMathOperator{\Col}{Col}

\begin{document}

\title{On measures of edge-uncolorability of cubic graphs:\\
 A brief survey and some new results}
%Complexity measures of edge-\\uncolorability in cubic graphs}

\author{M.A. Fiol$^a$, G. Mazzuoccolo$^b$, E. Steffen$^c$\\
\\ {\small $^a$Barcelona Graduate School of Mathematics}
\\ {\small and}
\\ {\small Departament de Matem\`atiques}
\\ {\small Universitat Polit\`ecnica de Catalunya}
\\ {\small Jordi Girona 1-3 , M\`odul C3, Campus Nord }
\\ {\small 08034 Barcelona, Catalonia.}
\\
\\ {\small $^b$Dipartimento di Informatica}
\\ {\small Universit\'a di Verona}
\\ {\small Strada le Grazie 15, 37134 Verona, Italy.}
\\
\\ {\small $^c$Paderborn Center for Advanced Studies }
\\ {\small and}
\\ {\small Institute for Mathematics}
\\ {\small Universit\"at Paderborn}
\\ {\small F\"urstenallee 11, D-33102 Paderborn, Germany.}\\
\\  {\small E-mails: {\tt miguel.angel.fiol@upc.edu,}}\\
 {\small {\tt giuseppe.mazzuoccolo@univr.it, es@upb.de}}
  }

%\date{}
\maketitle
\begin{abstract}
There are many hard conjectures in graph theory, like Tutte's 5-flow conjecture, and the 5-cycle double cover conjecture, which would be true in general if they would be true for cubic graphs. Since most of them are trivially true for
3-edge-colorable cubic graphs, cubic graphs which are not 3-edge-colorable, often called {\em snarks}, play a key role in this context. Here, we survey parameters measuring
how far apart a non 3-edge-colorable graph is from being 3-edge-colorable. We study their interrelation and prove some new results.
Besides getting new insight into the structure of snarks, we show that such  measures give partial results with respect to these important conjectures. The paper closes with a list of
open problems and conjectures.
%In this paper we survey the different  measures of edge-uncolorability in cubic graphs that have been defined in the literature.%, and also consider some new ones.
%We discuss their  similarities and differences, and related results in the classification of non-edge-colorable graphs, mainly snarks (the case of cubic graphs). We prove that some of such measures are equivalent. Besides colorings,  we comment upon flows  (e.g., Tutte's 5-flow conjecture), factors (e.g., Berge's conjecture, Fulkerson's conjecture) and other structural parameters, and relate them to each other.
%We end by showing that, besides the objective to gain new insight into the structure of snarks, such  measures give partial results with respect to these important conjectures.
\end{abstract}

\noindent \textbf{Mathematics Subject Classifications:}  05C15, 05C21, 05C70, 05C75.

\noindent \textbf{Keywords:} Cubic graph; Tait coloring; snark; Boole coloring;  Berge's conjecture; Tutte's 5-flow conjecture; Fulkerson's Conjecture.

\section{Introduction}

We begin by commenting upon the main motivation of this paper.

\subsection{Motivation}

There are many hard problems in graph theory which can be solved in the general case if they can be solved for cubic graphs. Examples  of such problems are the 4-color-problem (now a theorem), problems concerning cycle- and matching-covers,
surface embeddings or flow-problems on graphs. Most of these problems are easy to solve for 3-edge-colorable cubic graphs.
By Vizing's theorems \cite{v64,v65}, or Johnson's \cite{j66} for the case $\Delta=3$, a cubic graph is either 3- or 4-edge-colorable.
Graphs with chromatic index greater than  their maximum degree are often called {\em class 2} graphs and {\em class 1} otherwise.

Bridgeless cubic class 2 graphs with cycle separating 2- or 3-cuts, or 4-circuits, can be constructed from smaller cubic class 2 graphs by some easy operations. Such substructures are excluded in possible minimal counterexamples for the most of the problems.
Thus, possible minimal counterexamples are asked to be cyclically 4-edge-connected class 2 cubic  graphs with girth at least 5 (see, for instance, Chetwynd and Wilson\cite{cw81}, Isaacs \cite{i75}, and Watkins \cite{w83}).
Such graphs were called `snarks' by Gardner \cite{g76}, who borrowed the name
from a nonsense poem by the famous English author Lewis Carroll~\cite{Ca74}.
However,  the decomposition results given by Cameron, Chetwynd, and Watkins \cite{ccw87} and  Haj\'os \cite{ha61} showed that this notion of non-triviality may not be appropriate. Thus, some authors adopt the most simple definition stating that a snark is a class 2 bridgeless cubic graph. Moreover, we remark that, in some cases (for instance, in Berge's and Fan-Raspaud's conjectures), it is not known if a minimal counterexample should satisfy the above strong definition of snark.
For the sake of clarity, throughout the paper we will call {\em snark} to a class 2 cubic graph satisfying the strong definition, and otherwise we will speak of a
{\em class 2 bridgeless cubic graph}.

\subsection{Historical remarks}
To the authors' knowledge, the history of the hunting of (non trivial) snarks may be summarized as follows. In 1973 only four snarks were known, the earliest one being the ubiquitous Petersen graph $P$ \cite{p81}. The other three, on  18, 210, and  50  vertices, were found by Blanu\v{s}a \cite{b46},  Descartes (a pseudonymous of Tutte) \cite{d48}, and Szekeres \cite{s73} respectively. Then, quoting Chetwynd and Wilson \cite{ccw87}, ``In 1975 the art of snark hunting underwent a dramatic change when Isaacs \cite{i75} described two infinite families of snarks." One of these families, called the {\em BDS class}, included all (three) snarks previously known. In fact, this family is  based on a construction also discovered independently by  Adelson-Velski and  Titov in \cite{avt74}. The members of the other family are the so-called {\em flower snarks}. They were also found independently by Grinberg in 1972 \cite{go90}, although he never published his work. In \cite{j73}, Jakobsen proposed a method, based on the well-known Haj\'os-union \cite{ha61}, to construct class 2 graphs. As it was pointed out by Goldberg in \cite{go81}, some snarks of the BDS class can also be obtained by using this approach.
Later, Isaacs \cite{i76} described two new infinite sets of snarks found by Loupekhine.

In 1979, Fiol \cite{f79} proposed a new method of generating snarks, based on Boolean algebra. This method led to a new characterization of the BDS class, and also to a significant enlargement of it. For instance, Loupekhine's snarks \cite{i76} and most of the Goldberg's snarks \cite{go79,go81}  can be viewed as members of this class.
This approach is based on the interpretation of certain cubic graphs as logic circuits and, hence, relates the Tait coloring of such graphs with the SAT problem. Hence, such a method implicitly contains the result of Holyer \cite{hol81} proving that the problem of determining the chromatic index of a cubic graph is NP-complete.
In the same work \cite{f79}, infinitely many snarks of another family, called by Isaacs  the {\em Q class}, were also given. Apart from the Petersen graph $P$ and the flower snark  $J_5$ , in  \cite{i75} Isaacs had given only one further snark of this class: {\em the double star graph}. The graphs of this class are all cyclically  5-edge-connected. Subsequently, Cameron, Chetwynd, and Watkins \cite{ccw87} gave a method to construct new snarks belonging to such a family.
Other constructions of snarks, most of them belonging to the BDS class, have been proposed by several authors. See, for instance, the papers of Celmins and Swart \cite{csxx}, Fouquet,  Jolivet, and Rivi\`ere  \cite{fjr81}, and Watkins  \cite{w83}.

In \cite{Kochol_96} Kochol used a method, that he called {\em superposition}, to construct snarks with some specific structure (see an overview in \cite{ko09}).
Kochol used the concept of flows (see Section \ref{sec:flows}) to define superposition. However, the approaches via flows and logical circuits are equivalent since both can be described
as 3-edge-colorings with colors from the Klein four group. Thus, Kochol's {\em supervertices} and {\em superedges} are special instances of the {\em multisets}
used by Fiol in \cite{f79,f95}. More generally, in these papers there is a method to construct  multisets representing all logic gates of a circuit, which allows us to use `superposition' in a broader context (see Subsection 2.3).
By using this method, Kochol disproved two old conjectures on snarks. First, he constructed snarks of girth greater than 6,
disproving a conjecture of Jaeger and Swart \cite{jsw80}. Second, in \cite{ko09_2} Kochol disproved a conjecture of Gr\"{u}nbaum \cite{Gr68}, that snarks do not have polyhedral embeddings into orientable surfaces.

Later on, stronger criteria of non-triviality and reductions/constructions of snarks are considered in  many papers: Chetwynd and Hilton \cite{ch88},
Nedela and \v{S}koviera \cite{ns96},
Brinkmann and Steffen \cite{bs98},
Steffen \cite{s98,s01},
Gr\"unewald and Steffen \cite{gs99},
M\'a\v{c}ajov\'a and ~\v{S}koviera \cite{ms06},
Chladn\'y and \v{S}koviera \cite{cs10},
Karab\'a\v{s}, M\'a\v{c}ajov\'a, and Nedela  \cite{kmn13}, and Sinclair \cite{sinclair_97}, among others.
However, non of them leads to recursive construction of all snarks, as it is known for 3-connected graphs \cite{b91,t61}. Intuitively, a snark which is not reducible to a 3-edge-colorable graph seems to be more complicated or of higher complexity than a snark which is reducible to a 3-edge-colorable cubic graph.

One major difficulty in proving theorems for snarks is to find/define appropriate structural parameters for a proof. This leads to the study of invariants that measure `how far apart' the graph is from being 3-edge-colorable. Isaacs \cite{i75} called cubic class 2 graphs {\em uncolorable}. Hence, these invariants are sometimes called measures of edge-uncolorability in the literature. On one side, these parameters can give new insight into
the structure of bridgeless cubic class 2 graphs, on the other side they allow to prove partial results for some hard conjectures.
%We are are going to study the following measures of edge-uncolorability:

\subsection{Some strong conjectures}
The formulation of the 4-Color-Theorem in terms of edge-colorings of bridgeless planar cubic graphs is due to Tait \cite{Tait} (1880).
Tutte generalized the ideas of Tait when he introduced nowhere-zero flows on graphs \cite{t54,t56,t66}. He conjectured in 1954 that every bridgeless graph has a nowhere-zero 5-flow. This conjecture is equivalent to its restriction to cubic graphs.

In a recent paper, Brinkmann, Goedgebeur, H\"agglund, and Markstr\"om \cite{Gunnar_2011_paper} generated a list of all snarks up to 36 vertices and tested whether some conjectures are true
for these graphs. They disproved some conjectures. However, the most prominent ones are true for these graphs.

\begin{conjecture} [5-Flow Conjecture, Tutte \cite{t54}] \label{5flow_conj}
Every bridgeless graph has a nowhere-zero 5-flow.
\end{conjecture}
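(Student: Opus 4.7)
The plan is to reduce the conjecture to cubic graphs, then to snarks, and to attack the snark case via a measure of edge-uncolorability. First I would invoke the classical reduction mentioned in the excerpt: in a bridgeless graph $G$, one may suppress degree-$2$ vertices without changing the flow problem, and split a vertex of degree $d\geq 4$ into two vertices of smaller degree joined by an edge, so that a nowhere-zero $5$-flow on the resulting cubic graph pulls back to one on $G$. Hence it suffices to establish Conjecture \ref{5flow_conj} for bridgeless cubic graphs.

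Next, every $3$-edge-colorable cubic graph admits a nowhere-zero $4$-flow: identify the three colors with the three nontrivial elements of the Klein four-group $\mathbb{Z}_2\times\mathbb{Z}_2$, and observe that the three colors meeting at any vertex sum to zero, yielding a nowhere-zero $\mathbb{Z}_2\times\mathbb{Z}_2$-flow. The problem therefore collapses to snarks (in the broad sense of bridgeless class $2$ cubic graphs). At this point I would try to induct on one of the measures of edge-uncolorability surveyed in the paper: take a hypothetical counterexample $G$ minimizing the chosen measure, and show that $G$ either contains a reducible configuration whose contraction or replacement strictly decreases the measure, so that the induction hypothesis supplies a $5$-flow which can be lifted back, or already admits an explicit nowhere-zero $5$-flow obtained by combining a partial $3$-edge-coloring with a small number of $\mathbb{Z}_5$-valued adjustments along a carefully chosen subgraph that supports the ``uncolored'' edges (a $2$-factor of odd components, a small join of odd cuts, or similar).

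The main obstacle is exactly the step that has blocked every previous attempt for over sixty years: no reduction is known that handles an arbitrary snark, and the best unconditional result along these lines remains Seymour's $6$-flow theorem. The Petersen graph is cyclically $5$-edge-connected, vertex-transitive, and of girth $5$, so it resists every known local reduction, and yet a $5$-flow on it must still be exhibited by hand; any inductive scheme must therefore include a base-case mechanism that can produce a $5$-flow on highly symmetric, irreducible snarks. A realistic fallback under this plan would be to establish the conjecture for snarks with bounded value of the chosen uncolorability measure (bounded oddness, bounded resistance, small number of perfect matchings needed to cover the edge set, etc.), in the spirit of the partial results the rest of the survey collects.
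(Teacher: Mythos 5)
This statement is Conjecture~\ref{5flow_conj}, one of the open problems the survey is organized around; the paper contains no proof of it, only a collection of partial results (for instance, that a minimal counterexample must be cyclically $6$-edge-connected, have girth at least $11$, and have oddness at least $6$). Your proposal likewise does not contain a proof, and you say so yourself. The two reductions you begin with are correct and standard: the reduction to cubic graphs (by splitting high-degree vertices and suppressing divalent ones) and the further reduction to class~2 bridgeless cubic graphs via the equivalence between $3$-edge-colorability and nowhere-zero $4$-flows over the Klein four-group --- this is exactly Theorem~\ref{Tutte_character}$(ii)$ in the paper. But everything after that is a plan, not an argument. The proposed induction on a measure of edge-uncolorability is missing its two essential ingredients: a reducible configuration guaranteed to exist in every snark whose removal strictly decreases the measure while allowing a $5$-flow to be lifted back, and a base-case mechanism for irreducible snarks such as the Petersen graph. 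Neither is supplied, and no such scheme is known; the unconditional state of the art remains Seymour's $6$-flow theorem.

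Your fallback position --- proving the conjecture for snarks with bounded values of an uncolorability measure --- is precisely what the existing literature achieves and what this survey records: nowhere-zero $5$-flows are known for cubic graphs with oddness at most $4$ \cite{mast14}, and for cyclically $k$-edge-connected cubic graphs with $k \geq \frac{5}{2}\omega(G)-3$ \cite{s10}. So the proposal correctly identifies the shape of the known partial results but does not go beyond them; the gap is the entire resolution of the general snark case.
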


The following conjecture is attributed to Berge (unpublished, see e.g.~\cite{{fouqvan09}, {gm11}}).

\begin{conjecture} [Berge Conjecture] \label{Berge_Conjecture}
Every bridgeless cubic graph has five perfect matchings such that every edge is in at least one of them.
\end{conjecture}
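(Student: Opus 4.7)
The plan is to split on whether $G$ is $3$-edge-colorable. If $G$ admits a proper Tait coloring, its three color classes are perfect matchings whose union is $E(G)$, and padding with any two further perfect matchings (available by Petersen's theorem) yields five matchings trivially. The whole difficulty is concentrated in the case where $G$ is a snark, which is what I would attempt to attack.

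For a snark $G$, my first step would be to exploit Edmonds' perfect matching polytope theorem: since $G$ is bridgeless and cubic, the vector $\frac{1}{3}\mathbf{1}$ on $E(G)$ lies in the convex hull of characteristic vectors of perfect matchings. Hence there exist perfect matchings $M_1,\ldots,M_k$ and positive coefficients $\lambda_i$ with $\sum_i \lambda_i = 1$ and $\sum_i \lambda_i \chi_{M_i} = \frac{1}{3}\mathbf{1}$. Every edge is therefore fractionally covered by this family; the plan is to reduce $k$ to $5$ by an extraction argument, for instance via a Carath\'eodory-type reduction combined with an integrality step that uses the bridgelessness to avoid losing coverage on individual edges.

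A complementary combinatorial strategy I would run in parallel is to fix a perfect matching $M_1$ and analyze the $2$-factor $G - M_1 = C_1 \cup \cdots \cup C_t$. Each even cycle $C_i$ is covered by two perfect matchings of $G$ formed by alternating along $C_i$ and completing with $M_1$ on the remaining vertices, so the edges outside the odd cycles are already handled by three matchings $M_1, M_2, M_3$. The remaining task reduces to finding two perfect matchings $M_4, M_5$ whose union contains every odd-cycle edge of $G - M_1$. A useful intermediate goal here is Fulkerson's conjecture: six perfect matchings covering every edge exactly twice would provide an immediate slack that one could try to convert into a $5$-matching cover by discarding a carefully chosen matching or by merging two short-supported matchings via alternating-path surgeries along the odd cycles.

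The main obstacle is transparent: Berge's conjecture is an open problem of long standing, and every unconditional upper bound currently known for the minimum number of perfect matchings needed to cover the edges of an arbitrary bridgeless cubic graph is strictly larger than five. Bringing the bound down to $5$ would require either proving Fulkerson's conjecture as a lemma, or producing a new structural handle on snarks, most plausibly via one of the edge-uncolorability measures surveyed elsewhere in this paper, that excludes the possibility of a snark whose every cover by perfect matchings has size at least $6$.
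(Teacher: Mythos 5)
There is no proof to compare against: the statement you were given is Conjecture~\ref{Berge_Conjecture}, a long-standing open problem attributed to Berge, and the paper states it without proof (it only records partial results, e.g.\ that the conjecture holds for graphs with excessive index at most $5$ by definition, and that it would follow from the Berge--Fulkerson Conjecture~\ref{Fulkerson}, which Mazzuoccolo \cite{gm11} showed is in fact \emph{equivalent} to it). Your proposal is candid that it does not close the problem, but each of its three routes has a concrete gap worth naming. First, the Edmonds polytope argument only yields a \emph{fractional} cover; a Carath\'eodory reduction controls the number of matchings by the dimension of the polytope, not by $5$, and there is no known ``integrality step'' that rounds a fractional cover to five perfect matchings while preserving coverage of every edge --- the best unconditional bounds remain logarithmic in $|V(G)|$ (Kaiser--Kr\'al--Norine \cite{kkn06}, Mazzuoccolo \cite{m13}), exactly as the paper records.

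Second, the $2$-factor construction is not sound as stated: if $C_i$ is an even circuit of $G-M_1$, an alternating matching of $C_i$ cannot in general be ``completed with $M_1$ on the remaining vertices,'' because the $M_1$-edges incident to $V(C_i)$ may have their other endpoints outside $C_i$, so the proposed edge set is not a perfect matching of $G$. (When \emph{all} circuits of $G-M_1$ are even one does get a Tait coloring, but that is precisely the colorable case; for a snark the odd circuits are unavoidable and the claim that ``edges outside the odd cycles are already handled by three matchings'' is unjustified.) Third, invoking Fulkerson's conjecture as a lemma is circular: Fulkerson implies Berge by discarding one of the six matchings, but Fulkerson is itself open and, by \cite{gm11}, no easier than Berge. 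So the proposal is a reasonable survey of known attack lines, but it contains no step that advances beyond the current state of the art, and the conjecture remains unproved.
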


Conjecture \ref{Berge_Conjecture} is true if the following conjecture is true, which is also attributed to Berge in \cite{Seymour_79}.
This conjecture was first published in a paper by Fulkerson \cite{Fulkerson_1971}.

\begin{conjecture} [Berge-Fulkerson Conjecture \cite{Fulkerson_1971}] \label{Fulkerson}
Every bridgeless cubic graph has six perfect matchings such that every edge is in precisely two of them.
\end{conjecture}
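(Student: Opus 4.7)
The Berge-Fulkerson Conjecture has been open since 1971 and resists all known techniques, so any plan must be regarded as speculative; what follows is a roadmap rather than a prospective proof.

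\emph{Step 1 (reduction to snarks).} If $G$ is a $3$-edge-colorable cubic graph with color classes $M_1, M_2, M_3$, then the multiset $\{M_1,M_1,M_2,M_2,M_3,M_3\}$ consists of six perfect matchings covering every edge exactly twice, so the conjecture holds trivially. One may therefore restrict attention to class $2$ bridgeless cubic graphs and, via the standard arguments on cyclic edge-cuts of size at most $3$ and on short cycles alluded to in Subsection~1.1, to snarks in the strong sense used in this paper.

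\emph{Step 2 (inductive attack via measures of uncolorability).} The natural strategy, in the spirit of this survey, is induction on one of the edge-uncolorability parameters that the paper later catalogues (for example the resistance, the oddness, or the perfect matching index). One would try to show that any snark $G$ with a sufficiently large value of the chosen parameter contains a reducible configuration $H$ such that contracting or replacing $H$ produces a smaller cubic graph $G'$ which, by induction, admits a Berge-Fulkerson cover, and such that every Berge-Fulkerson cover of $G'$ can be extended to one of $G$ by completing it locally inside $H$.

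\emph{Step 3 (main obstacle).} The principal difficulty, and arguably the reason the conjecture has remained open for half a century, lies in this lifting step: the requirement that every edge be in \emph{exactly} two matchings is extremely rigid, forcing the possible boundary patterns of the local cover on $H$ to be matched precisely by those induced from $G'$, and no reduction is known that controls all such patterns simultaneously. Moreover, every proposed local modification has to be shown not to increase the measure of uncolorability, which is exactly the kind of monotonicity that typically fails for snarks. A reasonable intermediate goal is therefore the equivalent Berge Conjecture (Mazzuoccolo having shown that a positive answer to Berge already implies Berge-Fulkerson) or the weaker Fan-Raspaud Conjecture on three perfect matchings with empty common intersection; a quantitative refinement of either, bounding how many extra matchings one needs to reach exactness, would be strong evidence and is likely to suggest the right inductive parameter to use in Step~2.
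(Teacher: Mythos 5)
There is no proof to compare against: the paper states this as an open conjecture (first published by Fulkerson in 1971, attributed to Berge), offers no argument for it, and later records that the only known constraints on a minimum counterexample are the trivial ones (cyclic 4-edge-connectivity and girth at least 5). Your submission is likewise not a proof, and to your credit you say so explicitly. Step 1 is correct and consistent with the paper: for a $3$-edge-colorable cubic graph the doubled color classes give the required six perfect matchings, and the paper's Subsection~1.1 remarks justify restricting to snarks in the strong sense for this particular conjecture. But Steps 2 and 3 describe a strategy whose essential ingredient --- a reducible configuration together with a lifting lemma showing that every Berge--Fulkerson cover of the reduced graph extends across the configuration --- is precisely what is not known, as you yourself acknowledge. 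That acknowledged missing step is the entire content of the conjecture, so the proposal cannot be regarded as establishing the statement.

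One small caution on the framing of Step 3: the paper proves (via Mazzuoccolo's result, cited as \cite{gm11}) that the Berge Conjecture and the Berge--Fulkerson Conjecture are \emph{equivalent}, not merely that Berge--Fulkerson implies Berge. So treating the Berge Conjecture as a strictly ``weaker intermediate goal'' is not quite right; proving it would already prove the statement at hand. The Fan--Raspaud Conjecture, by contrast, genuinely is a weaker consequence and is also open. As a survey of why the conjecture is hard your text is reasonable, but it should be labelled as discussion, not as a proof attempt.
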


Mazzuoccolo \cite{gm11} proved that Conjectures \ref{Berge_Conjecture} and  \ref{Fulkerson} are equivalent.
The following conjecture of Fan and Raspaud is true if Conjecture \ref{Fulkerson} is true.

\begin{conjecture} [Fan-Raspaud Conjecture \cite{Fan_Raspaud_94}] \label{Fan_Raspaud}
Every bridgeless cubic graph has three 1-factors such that no edge is in each of them.
\end{conjecture}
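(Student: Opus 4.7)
The plan is to prove Conjecture~\ref{Fan_Raspaud} conditionally, as the sentence immediately preceding it announces: derive it from the Berge-Fulkerson Conjecture~\ref{Fulkerson}. Given a bridgeless cubic graph $G$ that is assumed to admit a Fulkerson cover, that is, six perfect matchings $M_1,\ldots,M_6$ with every edge of $G$ lying in exactly two of them, I would simply pick any three of these matchings, say $M_1,M_2,M_3$, and claim they already satisfy the Fan-Raspaud requirement.

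The verification is a one-line counting argument: if some edge $e$ were in $M_1\cap M_2\cap M_3$, then $e$ would be covered at least three times by the Fulkerson cover, contradicting the defining property that each edge is covered exactly twice. Hence no edge is common to all three chosen matchings. Note that the argument uses nothing about $G$ beyond the existence of the Fulkerson cover, and in fact any triple of the six matchings works, so Conjecture~\ref{Fulkerson} is vastly stronger than what is actually needed to reach the Fan-Raspaud conclusion.

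The main obstacle is precisely that this is a purely conditional derivation: Berge-Fulkerson is itself open, so the implication only reduces one difficult problem to a potentially harder one. An unconditional attack would have to build the three 1-factors without the Fulkerson hypothesis. The 3-edge-colorable case is immediate, since any three colour classes of a Tait coloring trivially witness the conclusion, so the genuine difficulty is concentrated on snarks, for which no recursive construction is available, as recalled in the historical remarks. I would therefore expect a direct proof to require some of the edge-uncolorability parameters surveyed in this paper, or an argument producing a \emph{near-Fulkerson} cover: a weakening of Conjecture~\ref{Fulkerson} strong enough to force the Fan-Raspaud conclusion but weak enough to be provable in full generality.
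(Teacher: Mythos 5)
This statement is an open conjecture; the paper offers no proof of it, only the remark immediately before it that Fan--Raspaud would follow from the Berge--Fulkerson Conjecture~\ref{Fulkerson}. What you have written is a correct proof of that implication: in a Fulkerson cover every edge lies in \emph{precisely} two of the six perfect matchings, so an edge lying in all three of any chosen triple would be covered at least three times, a contradiction; hence any three of the six matchings form a Fan--Raspaud triple. This is exactly the standard argument behind the paper's (unproved) remark, and your counting step is sound.

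But be clear about what this does and does not achieve: it is not a proof of Conjecture~\ref{Fan_Raspaud}, only a reduction of it to a strictly stronger open conjecture, and you acknowledge as much. There is no gap in the implication itself; the ``gap'' is simply that the hypothesis (Berge--Fulkerson) is unavailable. Your closing observations --- that the $3$-edge-colorable case is trivial via the three colour classes of a Tait coloring, and that the difficulty concentrates on class~2 graphs --- are consistent with how the paper treats the problem: its contribution is the list of necessary conditions on a minimal counterexample ($\omega(G)\geq 4$, $\mu_3(G)\geq 7$, $\chi_e'(G)\geq 5$) rather than any attempt at a proof.
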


A {\em cycle} is a 2-regular graph, and its components are called {\em circuits}.
A {\em 5-cycle double cover}  of a graph $G$ is a set of five cycles, such that every edge is in precisely two of them.
The following conjecture was stated by Celmins and Preissmann independently.

\begin{conjecture} [5-Cycle-Double-Cover Conjecture, see \cite{Zhang_book}] \label{5-cdcc}
Every bridgeless graph has a 5-cycle double cover.
\end{conjecture}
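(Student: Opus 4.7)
The 5-CDC is a long-standing open conjecture, so any concrete plan must acknowledge that a full proof is beyond current techniques; what I can propose is how I would organize the attack and where I expect it to break.

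First, I would reduce to the cubic case, which is standard: suppressing degree-2 vertices preserves bridgelessness and is reversible at the level of cycle double covers, while degree $\geq 4$ vertices can be handled by the classical splitting argument (a bridgeless graph remains bridgeless after a suitable splitting at any vertex of degree $\geq 4$). Second, I would dispose of the 3-edge-colorable cubic graphs: given a Tait coloring with color classes $M_1,M_2,M_3$, the three unions $M_i\cup M_j$ are 2-regular subgraphs that already form a 3-cycle double cover, and taking two additional empty cycles yields a 5-CDC trivially. So the problem concentrates on the class 2 bridgeless cubic graphs, and a natural further reduction (via 2- and 3-cuts and 4-circuits) narrows it down to snarks in the strong sense.

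For the snark case I would attempt an induction controlled by a measure of edge-uncolorability surveyed in this paper, with oddness $\omega(G)$ being the most promising handle. The base case is $\omega(G)=2$: fix a 2-factor $F$ with exactly two odd circuits $O_1,O_2$ and let $M=E(G)\setminus F$ be its complementary perfect matching. I would try to construct three cycles $C_1,C_2,C_3$ covering the edges of $F$ twice and the edges of $M$ once by mimicking the 3-edge-colorable argument on $F\cup M$, accepting a controlled failure along an $M$-path joining $O_1$ and $O_2$; then two further cycles $C_4,C_5$, each the symmetric difference of $M$ with a carefully chosen even subgraph supported near this failure path, would be engineered to cover each edge of $M$ one more time and contribute nothing net to $F$. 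If this base case succeeded, I would try to reduce $\omega(G)=2k$ to $\omega(G)=2k-2$ by replacing a localized ``uncolorable patch'' with a 3-edge-colorable gadget, proving a 5-CDC lifting lemma in the spirit of Kochol's superposition.

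The hard part, which is exactly why the conjecture remains open, is this lifting step. Snarks do not admit a recursive decomposition into smaller snarks analogous to the Barnette/Tutte decomposition of 3-connected graphs, so even if the patch-replacement is locally reversible, there is no guarantee that a 5-CDC of the simplified graph extends back. A second obstacle is that the $\omega=2$ base case, though folklore-plausible, is not known: one cannot in general force the two odd circuits to be joinable by a single $M$-path with the right parity constraints. A pragmatic fallback, and probably the most honest outcome I could expect, is to prove conditional results of the form ``if $G$ satisfies the Berge-Fulkerson conjecture with matchings in sufficiently general position, then $G$ admits a 5-CDC,'' thereby linking the measures discussed in this survey to the conjecture rather than resolving it outright.
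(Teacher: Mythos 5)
You have correctly identified that this statement is an open conjecture: the paper offers no proof of the 5-Cycle-Double-Cover Conjecture and only records it (with attribution to Celmins and Preissmann) together with partial results, so no proof attempt can be checked against a proof in the paper. Your reductions to the cubic case and the disposal of 3-edge-colorable cubic graphs via the three unions $M_i\cup M_j$ of color classes are standard and correct, and your overall framing --- that the difficulty concentrates on snarks and that a measure of edge-uncolorability such as oddness is the natural induction parameter --- is exactly the perspective this survey takes.

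There is, however, one factual error in your assessment of the state of the art. You write that the $\omega=2$ base case ``though folklore-plausible, is not known.'' It is known: Huck and Kochol \cite{hk95} proved that bridgeless cubic graphs of oddness $2$ have a 5-cycle double cover, and Huck \cite{huck01} extended this to oddness (in fact weak oddness) at most $4$. This is precisely why the paper lists $\omega'(G)\ge 6$ as a property of any minimal counterexample to Conjecture \ref{5-cdcc}. Likewise, the conjecture is known for cubic graphs with excessive index $4$ (Hou, Lai, and Zhang \cite{Hou_etal_2012}, Steffen \cite{s15}), which gives the second recorded property $\chi_e(G)\ge 5$ of a minimal counterexample. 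So the genuinely open frontier is not your base case but the lifting step you describe: pushing from oddness $4$ to oddness $6$ and beyond, where no replacement or superposition argument is known to preserve the existence of a 5-CDC. Your fallback of proving conditional results linking the uncolorability measures to the conjecture is exactly the programme the survey documents.
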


\subsection{Basic notation and first parameters of edge-uncolorability}

Let $G$ be a graph with vertex set $V=V(G)$ and edge set $E=E(G)$.  Let $deg(v)$ be the  degree of vertex $v\in V$, and let  $\Delta=\Delta(G)$ denote the maximum degree of $G$. A mapping $c : E(G) \longrightarrow {\cal C}=\{1, 2, \dots, k\}$ is a {\em $k$-edge-coloring} of $G$.
If $c(e) \not= c(e')$ for any two adjacent edges $e$ and $e'$,
then $c$ is a {\em proper $k$-edge-coloring} of $G$. The {\em chromatic index} of $G$, denoted by $\chi'(G)$, is the minimum
integer $k$ such that $G$ has a proper $k$-edge-coloring.
By the well-known theorem of Vizing \cite{v64}, $\chi'(G)$ must be either $\Delta(G)$ or $\Delta(G)+1$ (see Johnson \cite{j66} for the case $\Delta=3$).
In the former case, $G$ is said to be {\em class 1}. Otherwise, $G$ is said to be {\em class 2}.
In  \cite{fv13}, Fiol and Vilaltella proposed a simple, but empirically efficient, heuristic algorithm for edge-coloring of graphs, which is based on the displacement of conflicting vertices.
If $c$ is a proper edge-coloring coloring
of $G$, then we say that $c^{-1}(i)$ is a {\em color class}. Clearly, $c^{-1}(i)$ is  a matching in $G$. A graph $G$ with $\Delta(G)=3$ is called {\em subcubic} and, if $G$ is regular, then $G$ is a {\em cubic} graph.
A proper $3$-edge-coloring of a cubic graph is also refered to as a {\em Tait coloring}.

The following parameters to measure how far apart a graph is from being edge-colorable were first defined (they are listed in the historical order they were proposed). As commented above, these are used both, to gain information about the structure of class 2 cubic graphs, and to obtain partial results on the aforementioned conjectures.
\begin{itemize}
\item
(Fiol \cite{f79,f95})
$d(G)$: The {\em edge-coloring degree} $d(G)$ is the minimum number of conflicting vertices  (i.e., with some incident edges having the same color) in a  3-edge-coloring of $G$.
\item
(Huck and Kochol \cite{hk95})
$\omega'(G)$: The {\em weak oddness} $\omega'(G)$ is the minimum number of odd components of an even factor $F$ of $G$. Note, that $F$ may contain vertices of degree 0.
\item
(Huck and Kochol \cite{hk95})
 $\omega(G)$: The {\em oddness} $\omega(G)$ is the smallest number of odd components in a 2-factor of $G$.
\item (Steffen \cite{s98,s04})
$r(G)$: The {\em resistence} $r(G)$ is the minimum cardinality of a color class of a proper 4-edge-coloring of $G$. Clearly, this is precisely the minimum number of edges whose removal yields a 3-edge-colorable graph.
We will say {\em minimal} a proper $4$-edge coloring of $G$ with a color class of cardinality $r(G)$.
This parameter is called {\em color number} and denoted by $c(G)$ in \cite{s98}.
\item
(Steffen \cite{s98}, Kochol \cite{ko11}, Mkrtchyan and Steffen \cite{ms12})
$\rho(G)\equiv r_v(G)$ is the minimum number of vertices to be deleted from $G$ so that the resulting graph has a proper $3$-edge-coloring.
\end{itemize}

Other  (more recent) measures or concepts related to edge-uncolorability, considered in subsequent sections of the paper, are:  reduction and decomposition of snarks (Nedela and \v{S}koviera \cite{ns96}); maximum 2-edge- and 3-edge-colorable subgraphs of snarks (Steffen \cite{s04}); excessive index (Bonisoli and Cariolaro  \cite{bc07}); $\mu_3$ (Steffen \cite{s15});
nowhere-zero flows (Tutte \cite{t54}); flow resistance (introduced in this paper); oddness and resistance ratios; etc. Moreover, some measures of edge-uncolorability are closely related to some types of reductions and vice versa, see for example Nedela and \v{S}koviera \cite{ns96} and Steffen \cite{s98}.

In this work we give a survey on results on the different  measures of edge-uncolorability in cubic graphs, and give some new results on their relation to each other.
We also discuss their  similarities and differences, and related results in the attempt of a classification of non-edge-colorable graphs, mainly snarks (the case of cubic graphs).
The paper is organized as follows. We distinguish coloring, flow and structural parameters in the next sections and relate them to each other.
For general terminology and notations on graphs, see for instance, Bondy and Murty \cite{bm76}, Diestel \cite{Di97}, or Chartrand, Lesniak and Zhang \cite{clz16}.  For results on edge-coloring, see  e.g. Fiorini and Wilson \cite{fw77} or
Stiebitz,  Scheide, Toft, and Favrholdt \cite{sstf12}.

%%%%%%%%%%%%%%%%% Edge-colorings

%%%%%%%%%%%%%%%%%%%%%%%%%%%%%%%%%%%%
\section{Basic results and multipoles}
\label{sec:defi}
We begin by considering the first measures of uncolorability that were defined in the Introduction.
Basically, they concern with the concepts of conflicting vertex, conflicting zone, and oddness. Since their definition does not depend on the regularity of the graph, now we will focus on subcubic graphs. Let $c$ be a 3-edge-coloring of a graph $G$ with $\Delta(G)=3$. For $v \in V(G)$, let $c(v)$ be the set of colors that appear at $v$. Vertex $v$ is a {\em conflicting vertex} if $|c(v)| < deg(v)$,
and it is a {\em normalized} conflicting vertex if $|c(v)| = deg(v)-1$, i.e., there is precisely one color that appears twice at $v$.
A {\em conflicting zone} is a subgraph containing some conflicting vertex for any 3-edge-coloring of $G$. A {\em conflicting edge-cut} is a set of edges of $G$ that separates two conflicting zones.

%\subsection{Some basic results}
As shown in the following proposition,  all the first defined
parameters, excepting the oddness and weak oddness, coincide in the case of graphs with maximum degree $3$.

\begin{theorem}
For every subcubic graph $G$ the following hold:
\begin{itemize}
\item[$(i)$]
The graph $G$ has a proper 3-edge-coloring if and only if $d(G)=r(G)=\rho(G)=\omega(G)=\omega'(G)=0$.
\item[$(ii)$]
$d(G)=r(G)=\rho(G)$.
\item[$(iii)$]
A cubic graph $G$ is not edge-colorable (class 2) if and only if $d(G)\ge 2$.
\item[$(iv)$]
If $d(G)\le 1$, then $\omega(G)=d(G)$.
\item[$(v)$]
If $G$ is a cubic graph with $d(G)=2$, then $\omega(G)=d(G)$.
\item[$(vi)$]
In general, $\omega(G)\ge d(G)$, and both parameters can be arbitrarily far apart.
\end{itemize}
\end{theorem}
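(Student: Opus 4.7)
The plan is to treat the six items in turn, with the substantive work concentrated in (ii), (v), and the last clause of (vi). Part (i) is essentially by definition: if $G$ admits a proper 3-edge-coloring then every color class is a matching, so no conflicts, no edge deletions and no vertex deletions are needed; in the cubic case the union of two color classes is a 2-factor made of even cycles, giving $\omega(G)=\omega'(G)=0$. The converses are equally direct. For (iii), a cubic graph is class 2 iff every 3-edge-coloring has a conflict, so the only thing to rule out is $d(G)=1$: the color class that is doubled at the unique conflicting vertex $v$ would have degree $2$ at $v$ and degree $0$ or $1$ everywhere else, contradicting the hand-shake lemma applied to that subgraph.

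For (ii), I would establish the cyclic chain $\rho(G)\le d(G)\le r(G)\le\rho(G)$. The first inequality follows by deleting the conflicting vertices of an optimal 3-edge-coloring. The inequality $d(G)\le r(G)$ is obtained by taking a minimum edge set $R$ with $G\setminus R$ properly 3-edge-colored and, when reinserting each $e\in R$, assigning a color that avoids conflict at one preselected endpoint (possible because a subcubic vertex that has lost an incident edge still has a free color); this yields a 3-edge-coloring of $G$ with at most $|R|$ conflicts. For $r(G)\le\rho(G)$, given an optimal $V'$ with $G\setminus V'$ properly 3-edge-colored, one selects for each $v\in V'$ a single incident edge whose deletion suffices to extend the coloring; the one-edge-per-vertex accounting is made tight by subcubicity.

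Parts (iv) and (v) handle the small-oddness regime. In (iv), if $d(G)=0$ the cubic graph is 3-edge-colorable and the union of two color classes is a 2-factor of even cycles, so $\omega(G)=0$; the case $d(G)=1$ does not occur for cubic graphs by (iii). For (v), take a 3-edge-coloring achieving $d(G)=2$ with conflicts at $u$ and $v$, normalized so that some color $a$ is doubled at $u$ and some color $b$ at $v$. Let $H$ be the subgraph spanned by the edges not of color $a$; a short case analysis on whether $a=b$ or $a\ne b$ shows $H$ is a disjoint union of cycles together with a single $u$-$v$ path, and a Kempe-type exchange along that path promotes the color-$a$ class to a perfect matching of $G$. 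Its complement is a 2-factor whose only odd components are the two cycles born from closing the $u$-$v$ path at $u$ and at $v$, giving $\omega(G)\le 2$. Together with $\omega(G)\ge d(G)=2$ from (vi) this forces equality.

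For (vi), I would prove $\omega(G)\ge d(G)$ by starting from any 2-factor $F$ realizing $\omega(G)$: 2-color $F$ alternately, forcing exactly one conflict per odd cycle, and color the perfect matching $E(G)\setminus F$ with the third color. For the gap statement, one exhibits a family of snarks (for instance iterated Petersen-type superpositions, or the explicit families of Steffen and H\"agglund) for which $r(G)$, equal to $d(G)$ by (ii), stays bounded while $\omega(G)$ grows without bound. The main technical obstacle is the Kempe-exchange step in (v): several swaps are a priori available and one must verify that the chosen one produces exactly two odd cycles rather than more; this is handled by exploiting the fact that $u$ and $v$ lie on the same $u$-$v$ path of $H$, so closing that path in the color-$a$ class creates at most two new odd components.
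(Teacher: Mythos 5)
Your overall architecture is reasonable, but several of the steps that carry the real weight do not go through as written. The paper disposes of this theorem essentially by citation, and the ingredient it cites for $(ii)$ --- Fiol's normalization lemma, that a coloring attaining $d(G)$ can be chosen so that every conflicting vertex has exactly one repeated color, appearing exactly twice --- is precisely what your argument is missing. In your cyclic chain $\rho\le d\le r\le\rho$ the first two links are fine, but the closing link $r(G)\le\rho(G)$ is only asserted: given $V'$ with $G-V'$ colorable, it is not clear that one can always delete a single well-chosen edge per vertex of $V'$ and extend the coloring (a vertex of $V'$ may have several pendant edges forced to the same color by the outside coloring, and vertices of $V'$ may be adjacent to one another). ``The one-edge-per-vertex accounting is made tight by subcubicity'' is not an argument; this is exactly where the Kempe-chain normalization machinery of \cite{f95,ko11,ms12} is needed. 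The same unproved normalization is silently assumed at the start of your proof of $(v)$.

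The construction in $(v)$ is also flawed on its own terms. If $a$ is the color doubled at $u$, the complement $H$ of the color class $a$ has degree $1$ at both $u$ and $v$ only when the doubled colors coincide; when they differ, $v$ has degree $3$ in $H$, so $H$ is not ``cycles plus a $u$--$v$ path''. Even in the coinciding case, the $u$--$v$ path of $H$ carries only the two colors different from $a$, so a Kempe exchange along it leaves the color-$a$ class untouched and cannot ``promote'' it to a perfect matching (that class has degree $2$ at both $u$ and $v$). The clean argument is different: by the parity lemma (Lemma 2.2) the two conflicting vertices have the same \emph{singleton} color $e$, and then the subgraph spanned by the doubled color together with the color missing at $u$ is genuinely $2$-regular, i.e.\ a $2$-factor; its circuits through exactly one of $u,v$ are odd, all others are even, and $u,v$ cannot lie on a common circuit because $G$ is class 2. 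Three further points. In $(iv)$ you dismiss $d(G)=1$ via $(iii)$, but the theorem is stated for subcubic graphs, where $d(G)=1$ does occur (a conflicting vertex of degree $2$) and is the actual content of that item; the paper handles it with a Kempe chain producing one odd circuit. In $(iii)$, the handshake lemma applied to one color class with degrees ``$2$ at $v$ and $0$ or $1$ elsewhere'' yields no contradiction; you need that every non-conflicting vertex has degree exactly one in each class and that $|V(G)|$ is even for cubic $G$ (and a separate class for the case of three equal colors at $v$). Finally, in $(vi)$ your witness for ``arbitrarily far apart'' is a family with $r$ bounded and $\omega$ unbounded; no such family is known --- whether $\omega$ is bounded by any function of $r$ is an open problem recorded in Section 5 of this very paper. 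Steffen's result only makes the difference $\omega-d$ unbounded, with both parameters growing.
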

\begin{proof}
$(i)$ is a simple consequence of the definitions.
$(ii)$ is a direct consequence of the result by Fiol \cite[Theorem 2.1]{f95} stating that, if $d(G)=d$, then there is a 3-edge coloring with exactly $d$ normalized conflicting vertices.
This result was rediscovered by Kochol \cite{ko11} by proving that $\rho(G)=\sigma(G)$.
%for every graph $G$ with maximum degree 3.
Again, $(iii)$ is a consequence of Corollary 1.2 in \cite{f95}.
Concerning $(iv)$, if $d(G)=0$ the result is clear. Otherwise, if $d(G)=1$, the only conflicting vertex has degree 2 with edges having color, say, 1. Then, the Kempe chain 1-2 (or 1-3) gives an odd circuit and $\omega(G)=1$.
The result in $(v)$ is proved similarly by using Kempe chains, and it was given \cite[Theorem 2.2]{f95}.
Finally, the result in $(vi)$ was proved by Steffen in \cite[Theorem 2.3]{s04}.
\end{proof}

The following results are drawn from \cite{f79,f95,ff84}.

\begin{theorem}
\label{propo-f}
The following holds.
\begin{itemize}
\item[$(i)$]
Let $G$ be a class 2 graph. If $H\subset G$ is a conflicting zone with $d(H)=d\ge 2$, then there exists a conflicting zone $H'\subset H$ with $d(H')=d-1$.
\item[$(ii)$]
Let $S$ be a snark with $d(S)\ge 2$. Then, for every $d'=1,\ldots,d$, there exists a conflicting zone $H\subset G$ with $d(H)=d'$.
\item[$(iii)$]
Let $S$ be a snark with $d(S)>2$. Then $S$ contains a subdivision of a snark $S'$ with $d(S')=2$ ($S'$ is a minor of $S$).
\item[$(iv)$]
Let $S$ be a snark with $d(S)\ge 2$. Then,  its number of vertices satisfy
$|V(S)|\ge 10\left\lfloor (d+1)/2\right\rfloor$.
\end{itemize}
\end{theorem}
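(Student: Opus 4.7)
The plan is to prove parts $(i)$--$(iv)$ in sequence, with $(i)$ serving as the workhorse.

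For $(i)$, I would peel off one conflict. Using the fact (shown in the preceding theorem) that $d(H)=d$ is witnessed by a 3-edge-coloring $c$ of $H$ with exactly $d$ normalized conflicting vertices, pick one such vertex $v$ and set $H':=H-v$. The restriction $c|_{H'}$ has at most $d-1$ conflicts, so $d(H')\le d-1$. Conversely, any 3-edge-coloring of $H'$ extends to $H$ by coloring the (at most three) edges at $v$ with at most one new conflict: if all three neighbors $u_i$ already carry a full 2-element color-set $A_i$, then the color of $vu_i$ is forced to $\{1,2,3\}\setminus A_i$ and at most one repetition can arise among the three forced colors at $v$ itself; if some $|A_i|<2$, there is strictly more slack. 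Hence $d(H)\le d(H')+1$, so $d(H')\ge d-1$ and therefore $d(H')=d-1$. The main obstacle is showing that $H'$ is itself a conflicting zone of $G$: the vertex $v$ must be chosen so that removing it does not destroy the conflict-forcing property inherited from $H$; intuitively, since $H$ forces at least $d\ge 2$ conflicts in every 3-edge-coloring of $G$, removing one vertex can eliminate at most one conflict, leaving $\ge d-1\ge 1$ forced conflicts in $H-v$.

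For $(ii)$, iterate $(i)$ starting from $H_d:=S$, which is a conflicting zone of itself with $d(H_d)=d$; this yields a descending chain $S=H_d\supset H_{d-1}\supset\cdots\supset H_1$ with $d(H_i)=i$ for each $i$.

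For $(iii)$, use $(ii)$ to extract a conflicting zone $H\subseteq S$ with $d(H)=2$. Since $H$ is subcubic and $d(H)\ge 2$, $H$ is not 3-edge-colorable. Iteratively delete vertices of degree $\le 1$ and suppress vertices of degree $2$ (replacing an incident path $u_1vu_2$ by a single edge $u_1u_2$); neither operation changes $d$, because a vertex of degree $\le 1$ cannot carry a conflict and suppressing a degree-$2$ vertex gives a bijection on 3-edge-colorings. The result is a cubic graph $S'$ with $d(S')=2$, hence class $2$ by part $(iii)$ of the preceding theorem, i.e., a snark. By construction $S$ contains a subdivision of $S'$, so $S'$ is a topological minor of $S$.

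For $(iv)$, combine $(iii)$ with the fact that the Petersen graph, on $10$ vertices, is the smallest snark, and iterate. Pairing the nested zones from $(ii)$ two levels at a time (each pair $H_{2k+1}\supset H_{2k-1}$ producing, via the smoothing of $(iii)$, a distinct Petersen-type snark minor that requires at least $10$ further vertices of $S$) allows one to extract $\lfloor(d+1)/2\rfloor$ essentially disjoint snark subgraphs, summing to $|V(S)|\ge 10\lfloor(d+1)/2\rfloor$. The hard part is verifying sufficient disjointness of these successive minors so that the count is genuinely additive; this is where one must exploit the minimality of each Petersen-like obstruction and the fact that peeling off one conflict-zone of resistance $2$ leaves a class-$2$ residue of resistance $d-2$, enabling an induction on $d$.
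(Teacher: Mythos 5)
First, a point of reference: the paper itself gives no proof of this theorem --- it is stated as ``drawn from \cite{f79,f95,ff84}'' --- so there is no in-paper argument to match yours against, and your attempt must stand on its own. Parts $(i)$ and $(ii)$ essentially do: deleting a normalized conflicting vertex $v$ of a minimal coloring gives $d(H-v)\le d-1$, the greedy extension of a coloring of $H-v$ back to $H$ gives $d(H)\le d(H-v)+1$, and the ``main obstacle'' you flag is in fact no obstacle, since $d(H')=d-1\ge 1$ already forces every $3$-edge-coloring of $G$ to place a conflicting vertex inside $H'$, which is exactly what the definition of conflicting zone requires. Iterating gives $(ii)$.

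Parts $(iii)$ and $(iv)$ contain genuine gaps. In $(iii)$, suppressing a degree-$2$ vertex is \emph{not} a bijection on $3$-edge-colorings, and it does not preserve $d$: a coloring of the smoothed graph lifts by giving both halves of the split edge the \emph{same} color, which makes the reinstated divalent vertex conflicting (its two edges then share a color), while a coloring of $H$ in which that vertex is non-conflicting assigns \emph{different} colors to its two edges and hence does not descend to a single well-defined edge color. This is precisely why a subdivision of a class 2 cubic graph can itself be $3$-edge-colorable, so the equality $d(S')=2$ cannot be obtained by your smoothing step; the cited sources handle this by treating the zone as a multipole with semiedges and arguing via the parity/Boole-coloring calculus rather than via literal vertex suppression. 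Moreover, the paper's ``snark'' is cyclically $4$-edge-connected with girth at least $5$, and nothing in your construction controls the girth or cyclic connectivity of the smoothed graph (suppression can create multiple edges and short circuits). In $(iv)$, the zones $H_1\subset H_2\subset\cdots\subset H_d$ produced by $(ii)$ are nested, not disjoint, so ``pairing them two levels at a time'' does not by itself yield $\lfloor (d+1)/2\rfloor$ vertex-disjoint $10$-vertex pieces; the disjointness you defer to ``the hard part'' is the entire content of the bound, and without it your argument only gives $|V(S)|\ge 10$. As written, $(iii)$ and $(iv)$ are sketches of a strategy that needs the multipole machinery (or an equivalent substitute) to be turned into proofs.
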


From Proposition \ref{propo-f}$(iv)$, the first author \cite{f79} also managed to prove the following theorem (the most difficult case was $N=16$ and it was rediscovered later by Fouquet \cite{fou82}):
\begin{theorem}
There is no snark with number of vertices $N\in\{12,14,16\}$.
\end{theorem}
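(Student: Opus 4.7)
The plan is to argue by contradiction. Suppose $S$ is a snark with $N=|V(S)|\in\{12,14,16\}$. Since $S$ is class~$2$, part $(iii)$ of the first theorem of this section gives $d(S)\ge 2$, and Theorem~\ref{propo-f}$(iv)$ then yields $N\ge 10\lfloor(d(S)+1)/2\rfloor$. For $N\le 16$ this forces $\lfloor(d(S)+1)/2\rfloor=1$, and hence $d(S)=2$. Because $d(S)=2$ on a cubic graph, part~$(v)$ of the first theorem gives $\omega(S)=2$, so $S$ admits a $2$-factor $F$ with exactly two odd circuits.

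Next I would exploit the fact that $S$ has girth at least~$5$: the two odd circuits of $F$ have odd lengths $a,b\ge 5$, and any remaining (even) circuits of $F$ have even lengths $\ge 6$. Enumerating the partitions $a+b+(\text{even terms})=N$ under these constraints leaves very few options. For $N=12$ only $(a,b)=(5,7)$ is possible; for $N=14$ only $(5,9)$ or $(7,7)$; and for $N=16$ only $(5,11)$, $(7,9)$, or $(5,5)$ together with an additional $6$-circuit. In each case the complementary perfect matching $M=E(S)\setminus E(F)$ is highly constrained, since its edges must join vertices of the circuits of $F$ without creating triangles or $4$-cycles.

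For each admissible configuration I would then exhaustively list the non-isomorphic ways of inserting $M$, using the fact that (i)~the girth is $\ge 5$, which forbids chords of length $\le 4$ and certain parallel matching edges between consecutive vertices, and (ii)~$S$ is cyclically $4$-edge-connected, which rules out cycle-separating cuts of size $\le 3$. For each candidate graph surviving these filters, I would either construct a proper $3$-edge-coloring, typically by swapping colors along a suitable Kempe chain that eliminates both normalized conflicting vertices simultaneously (contradicting $d(S)=2$), or exhibit a cycle-separating $2$- or $3$-edge-cut (contradicting cyclic $4$-edge-connectivity).

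The main obstacle is the $N=16$ case with $2$-factor type $(5,5,6)$: the matching edges can be distributed among three circuits in many non-isomorphic ways, and the enumeration together with the ruling-out step becomes genuinely delicate — consistent with the paper's remark that this was exactly the case later rediscovered by Fouquet~\cite{fou82}. By contrast, $N=12$ and $N=14$ should be dispatched quickly, since the two odd circuits already cover all vertices and the matching then has only a handful of inequivalent placements, each easily defeated by an explicit $3$-edge-coloring or a small cycle-separating cut.
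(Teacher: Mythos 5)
Your reduction follows exactly the route the paper indicates: part $(iii)$ of the first theorem of Section~\ref{sec:defi} gives $d(S)\ge 2$, Theorem~\ref{propo-f}$(iv)$ forces $d(S)=2$ for $N\le 16$, part $(v)$ then gives $\omega(S)=2$, and the girth-$5$ constraint correctly restricts the possible types of a $2$-factor with two odd circuits to $(5,7)$ for $N=12$, to $(5,9)$ and $(7,7)$ for $N=14$, and to $(5,11)$, $(7,9)$ and $(5,5,6)$ for $N=16$. The genuine gap is that the entire substance of the theorem lies in the final enumeration, and you only describe how you \emph{would} carry it out. The assertion that every candidate graph surviving the girth and connectivity filters ``is easily defeated by an explicit $3$-edge-coloring or a small cycle-separating cut'' is precisely the claim that has to be established for each configuration, and it cannot be taken on faith: for the types $(5,9)$, $(7,7)$, $(5,11)$, $(7,9)$ and above all $(5,5,6)$ the number of nonisomorphic placements of the complementary perfect matching is not negligible, and each placement requires an actual Tait coloring or an actual cycle-separating cut of size at most $3$ to be exhibited. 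The paper itself singles out $N=16$ as the difficult case (the one later rediscovered by Fouquet), so this is not a step that a proof can leave as a plan.

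It is worth noting that your framework does dispatch $N=12$ essentially for free, and making this explicit would strengthen the writeup: any chord of a $5$-circuit or of a $7$-circuit joins vertices at circuit-distance at most $3$ and therefore creates a circuit of length at most $4$, so in the $(5,7)$ configuration all five vertices of the $5$-circuit must be matched into the $7$-circuit, leaving exactly two vertices of the $7$-circuit that could only be matched by a forbidden chord; hence no girth-$5$ cubic graph on $12$ vertices has a $2$-factor of type $(5,7)$ at all, and no $12$-vertex snark exists. A comparably explicit treatment of the remaining five configuration types --- in particular a complete, isomorphism-free list of matchings for $(5,5,6)$ together with a verified $3$-edge-coloring or small cut for each --- is what is missing to turn your outline into a proof.
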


Some of the aforementioned results were also considered for subcubic graphs by Fouquet and Vanherpe \cite{fouqvan13}, and  Rizzi \cite{Rizzi09}.

\subsection{Multipoles}

In the study of snarks it is useful to think of them as made up by joining two or more graphs with `dangling edges'. Following \cite{f79,f91}, we call these graphs multipoles. More precisely, a {\em multipole} or  {\em $m$-pole}  $M = (V, E, {\cal X})$  consists of a (finite) set of vertices  $V = V(M)$, a set of edges $E = E(M)$ or unordered pair of vertices, and a set ${\cal X} = {\cal X}(M)$, $|{\cal X}|= m$, whose elements $\epsilon$ are called {\em semiedges}. Each semiedge is associated either with one vertex or with another semiedge making up what we call an isolated edge.
For instance, Fig. \ref{7-pole}$(a)$ shows a $7$-pole with two free edges. Notice that a multipole can be disconnected or even be `empty', in the sense that it can have no vertices. The diagram of a generic $m$-pole is shown in Fig. \ref{7-pole}$(b)$.
%If $M$ is just a free edge, we denote it by \textbf{e}.

\begin{figure}[h]
\begin{center}
%\vskip-1cm
\includegraphics[width=12cm]{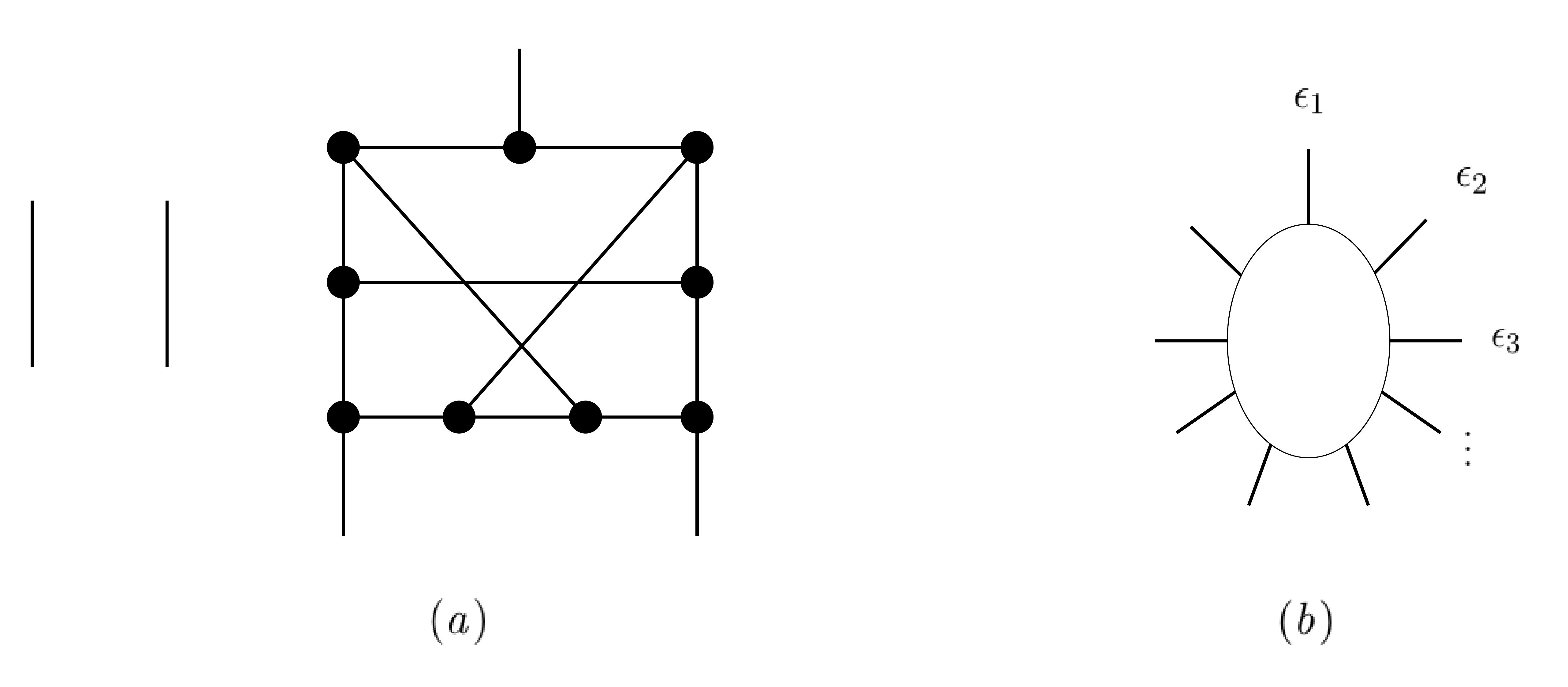}
\caption{A 7-pole with two free edges $(a)$ and a generic multipole $(b)$.}
\label{7-pole}
\end{center}
\end{figure}

The behavior of the semiedges is as expected.  For instance, if the semiedge   $\epsilon$ is associated with vertex $u$, we say that $\epsilon$ is {\em incident} to $u$. Then we write $\epsilon = (u)$ following Goldberg's notation \cite{go81}.  By joining the semiedges  $(u)$  and  $(v)$  we obtain the edge $(u, v)$. As for graphs, we define the {\em degree} of $u$, denoted by $deg(u)$, as the number of edges plus the number of semiedges incident to it. Throughout this paper, a multipole will be supposed to be cubic, i.e.,  $deg(u) = 3$ for all $u\in V$.

%Given a multipole $M$, we denote by $M^*$ the graph (with maximum degree 3) obtained from $M$  by leaving out all its semiedges.  Then, $M$  is said to be {\em contained} in a cubic graph $G$ if $M^*$ is a (proper) subdigraph of  $G$.  Notice that, in this case, $M$ can be thought of as being obtained from $G$ by cutting (in one or more points) some of its edges.

As expected, a {\em Tait coloring} of a $m$-pole $(V, E, {\cal E})$  is
an assignment of three colors to its edges and semiedges, that is, a mapping $\phi : E \cup {\cal E} \rightarrow {\cal C}=\{1,2,3\}$, such that the edges and/or semiedges incident to each vertex have different colors, and each isolated edge has both semiedges with the same color.
For example, Fig. \ref{colored-6-pole} shows a Tait coloring of a 6-pole. Note that the numbers of semiedges with the same color have the same parity. The following basic lemma states that this is always the case (see Izbicki \cite{Izbicki_66} and also Isaacs~\cite{i75}):

\begin{lemma}[{\bf The Parity Lemma}]
Let $M$ be a Tait colored $m$-pole with $m_i$ semiedges having color $i=1,2,3$. Then,
\begin{equation}
m_1\equiv m_2 \equiv m_3 \equiv m\qquad (\mod\ 2). \label{parity}
\end{equation}
\end{lemma}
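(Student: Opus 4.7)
My plan is to prove the Parity Lemma by a straightforward double-counting argument at the vertices of $M$, exploiting the fact that in a Tait coloring of a cubic multipole every vertex is incident to exactly one edge/semiedge of each of the three colors. The bookkeeping has to distinguish three types of colored objects contributing to $m_i$: proper edges with both endpoints in $V(M)$, semiedges attached to a vertex, and isolated edges (pairs of semiedges joined to each other with no vertex incident). The last type is the only subtle one, because such an edge contributes $2$ to $m_i$ without touching any vertex.

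Fix a color $i\in\{1,2,3\}$ and let $n=|V(M)|$. Denote by $e_i$ the number of proper edges of color $i$, by $s_i$ the number of semiedges of color $i$ that are incident to some vertex of $M$, and by $\iota_i$ the number of isolated edges of color $i$. By the Tait condition, each vertex of $M$ is incident to exactly one object of color $i$; counting incidences gives
\begin{equation*}
2e_i+s_i=n.
\end{equation*}
Moreover, by definition, $m_i=s_i+2\iota_i$, since each isolated edge of color $i$ contributes two semiedges of that color to ${\cal X}$. Combining these two identities,
\begin{equation*}
m_i=s_i+2\iota_i=n-2e_i+2\iota_i\equiv n\pmod 2.
\end{equation*}
Since this holds for every color independently, we immediately obtain $m_1\equiv m_2\equiv m_3\equiv n\pmod 2$.

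It only remains to check that this common parity agrees with that of $m=m_1+m_2+m_3$. But from $m_i\equiv n\pmod 2$ for each $i$ we get $m\equiv 3n\equiv n\pmod 2$, so $m_1\equiv m_2\equiv m_3\equiv m\pmod 2$, as required.

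There is no real obstacle in this proof; the only mildly delicate point is not to forget the isolated edges when translating between the ``vertex‑incidence'' count (which sees $s_i$) and the ``semiedge'' count (which sees $s_i+2\iota_i$). Once that distinction is kept straight, the $2\iota_i$ term is even and drops out mod $2$, and the parity of $m_i$ is forced by the single invariant $n\pmod 2$.
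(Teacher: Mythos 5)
Your proof is correct. The accounting is sound: with $M$ cubic, each vertex sees exactly one object of color $i$ in a Tait coloring, so $2e_i+s_i=|V(M)|$; the isolated edges are handled properly via $m_i=s_i+2\iota_i$ (the Tait condition forces both semiedges of an isolated edge to share a color); and the conclusion $m_1\equiv m_2\equiv m_3\equiv m\pmod 2$ follows.

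Your route differs from the paper's in an interesting way. The paper does not write out a direct proof of this lemma at all --- it cites the classical references and instead proves, in the following subsection, a ``slightly more general version'' phrased in terms of Boole colorings. That argument works in the Klein four-group: one sums the Boole-colorings over all vertices, observes that this equals $\sum_{e}2\,Bc(e)=\0$ because every internal edge is counted twice, and then reads off the parities from the fact that $n_i\1_i$ is $\0$ or $\1_i$. Your proof replaces this single group-valued identity with three independent integer incidence counts, one per color, and is strictly more elementary (no group structure needed). It also yields a small bonus the group argument does not immediately give: each $m_i$ is congruent to $|V(M)|$ modulo $2$, so the common parity of the $m_i$ is pinned down by the order of the multipole, not merely shown to be common. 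What the Klein-group formulation buys in exchange is exactly the generalization the paper needs later --- it extends verbatim to colorings with conflicting vertices (Boole-coloring $\1_i$) and underlies the construction of snarks via logic gates, which a per-color count does not capture as cleanly.
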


\begin{figure}[h]
\begin{center}
%\vskip-1cm
\includegraphics[width=6cm]{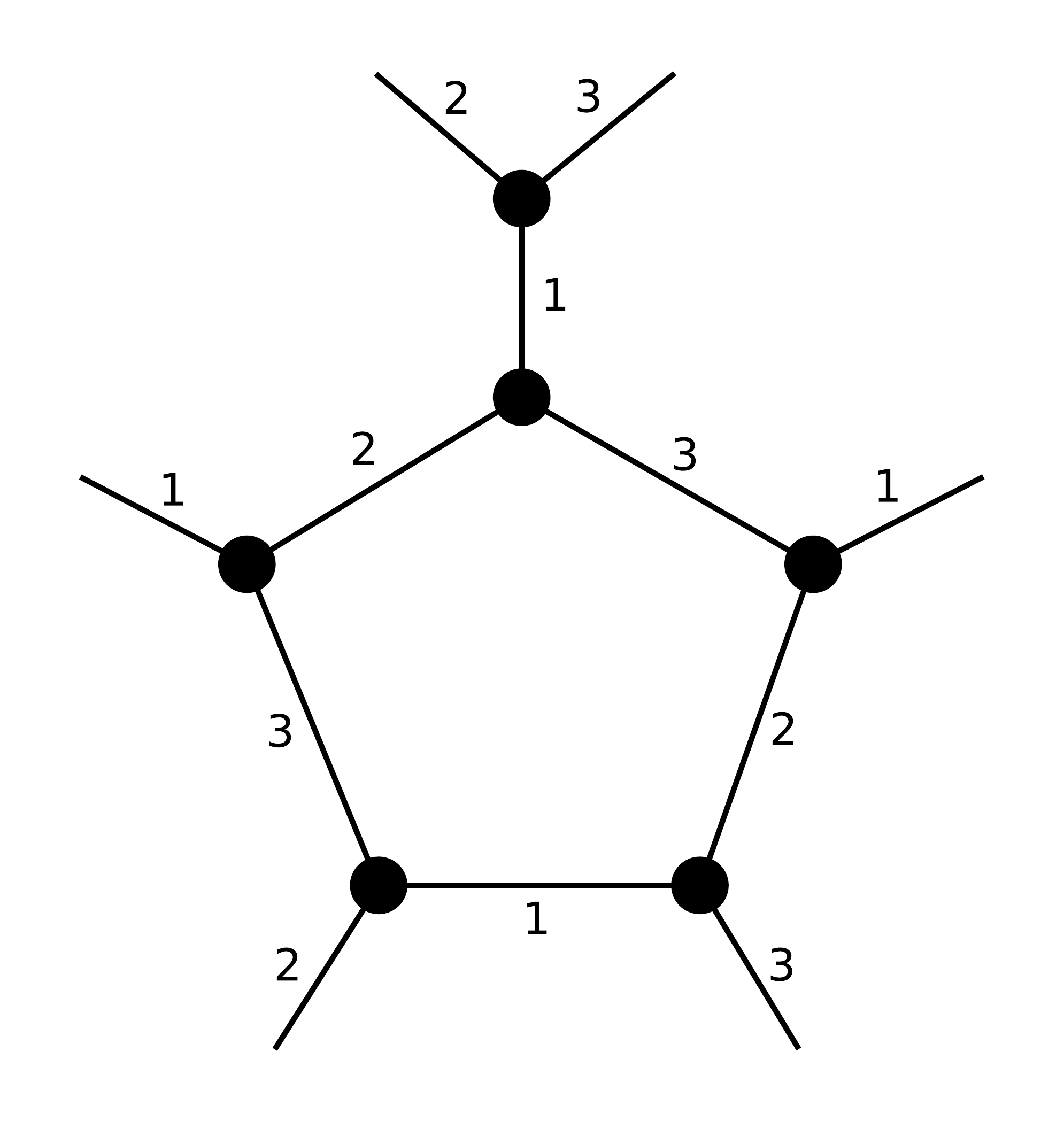}
\caption{A Tait coloring of a 6-pole.}
\label{colored-6-pole}
\end{center}
\end{figure}

This result has been used extensively in the literature on the subject. See, for instance,  Blanu\u{s}a \cite{b46}, Descartes \cite{d48},  Isaacs  \cite{i75}, and Goldberg \cite{go81}. Although in these references  isolated edges are not allowed, the proof is basically the same. A slightly more general version concerning Boole colorings will be proved in the next subsection.

Given an  $m$-pole  $M$  with semiedges  $\epsilon_1,\ldots,\epsilon_m$, we define its {\em set   $\Col(M)$ of semiedge colorings} as
$$
\Col(M) = \{(\phi(\epsilon_1),\phi(\epsilon_2),\ldots,\phi(\epsilon_m)):\mbox{\rm $\phi$ is a Tait coloring of $M$} \}.
$$
Note  that  $\Col(M)$ depends on the order in which the semiedges are considered. Thus, when referring to such a set we will implicitly assume that this ordering is given.

Of course, $\Col(M) =\emptyset$ if and only if  $M$  is not Tait colorable.  In this case it is trivial to obtain a class 2 graph from $M$. Indeed, we can either remove all its semiedges or join them properly in order to achieve regularity (using additional vertices if  necessary).  By  the parity lemma, the simplest example of non-Tait-colorable $m$-pole is when $m=1$, so that any cubic graph with a bridge is trivially class 2.

In the other extreme, we will say that $M$ is {\em color-complete} if $\Col(M)$ has maximum cardinality. In other words, $M$ is color-complete if it can be Tait colored so that its semiedges have any combination of colors satisfying the parity lemma. For instance, all Tait colorable 2-poles and 3-poles are   color-complete  because, according to \eqref{parity lemma}, the only possibilities, up to permutation of the colors, are $(\phi(\epsilon_1),\phi(\epsilon_2) = (a, a)$  and
$((\phi(\epsilon_1),\phi(\epsilon_2),(\phi(\epsilon_3)) = (a, b, c)$ respectively ---here, and henceforth, the letters $a$, $b$, $c$ stand for the colors 1, 2, 3 in any order. Clearly, the simplest color-complete 2-pole and 3-pole are respectively an isolated edge and a single vertex with 3 semiedges incident to it.  They will be denoted by  $\e$  and  $\vecv$ respectively.  Besides, a  color-complete 4-pole has four different values of $((\phi(\epsilon_1),\phi(\epsilon_2),(\phi(\epsilon_3)),(\phi(\epsilon_4))$. Namely, $(a, a, a, a)$, $(a, a, b, b)$, $(a, b, a, b)$  and  $(a, b, b, a)$.

A $m$-pole $M$ is said to be {\em reducible} when there exists an $m$-pole $N$ such that $|V(N)|<|V(M)|$ and $\Col(N)\subseteq \Col(M)$. Otherwise, we say that $M$ is {\em irreducible}. %Obviously, all minimal multipoles are irreducible.
This concept was first introduced by Fiol in \cite{f91}, where the following result was proved:

\begin{proposition}
Let $U$ be a snark. Then, for any integer $m\ge 1$ there exists a positive integer-valued function $v(m)$ such that any $m$-pole $M$ contained in $U$ with $|M|> v(m)$ is either not Tait colorable or reducible.
\end{proposition}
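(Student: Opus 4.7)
My proposed approach is a pigeonhole/finiteness argument that exploits the fact that, for fixed $m$, a cubic $m$-pole has only finitely many possible coloring patterns. The role of the snark $U$ will be marginal: it only guarantees that $M$ is a legitimate cubic $m$-pole to which the combinatorial classification applies.

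First, I would use the Parity Lemma to observe that every element of $\Col(M)$ is a vector $(c_1,\ldots,c_m)\in\{1,2,3\}^m$ whose three color-counts satisfy the common parity condition $m_1\equiv m_2\equiv m_3\equiv m\ (\mathrm{mod}\ 2)$. Let $T_m$ denote the (finite) set of such admissible vectors. Since $\Col(M)\subseteq T_m$, we have $\Col(M)\in 2^{T_m}$, and the latter is a finite set depending only on $m$.

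Next, for each nonempty $\mathcal{S}\subseteq T_m$ that is actually realized as $\Col(N)$ for some cubic $m$-pole $N$, I would set
$$
\mu(\mathcal{S}) \;=\; \min\{|V(N)| : N \text{ is an } m\text{-pole with } \Col(N)=\mathcal{S}\},
$$
which is well-defined as the minimum of a nonempty subset of $\mathbb{N}$. Then I would define
$$
v(m) \;=\; \max\{\mu(\mathcal{S}) : \mathcal{S}\subseteq T_m \text{ is realized and nonempty}\},
$$
which is well-defined because the index set is finite.

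Finally, to conclude, I would take an $m$-pole $M\subset U$ with $|V(M)|>v(m)$. If $M$ is not Tait colorable there is nothing to prove. Otherwise $\mathcal{S}:=\Col(M)$ is a nonempty realized element of $2^{T_m}$, so by construction there is an $m$-pole $N$ with $\Col(N)=\mathcal{S}$ and $|V(N)|=\mu(\mathcal{S})\le v(m)<|V(M)|$. Hence $\Col(N)\subseteq\Col(M)$ and $|V(N)|<|V(M)|$, showing that $M$ is reducible.

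The main (and really only) subtlety lies in being confident that the bound produced this way depends only on $m$: this holds precisely because $T_m$, and therefore $2^{T_m}$ and the set of realized color patterns, is determined by $m$ alone. No sharp quantitative estimate on $v(m)$ is asked for, so I would not try to grind one out; if wanted, one could bound $|T_m|\le 3^m$ and hence $v(m)$ by the largest minimum realization among at most $2^{3^m}$ color sets, but this is not needed for the statement.
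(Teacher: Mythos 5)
Your argument is correct. Note that the paper itself gives no proof of this proposition --- it only cites Fiol's 1991 paper --- so there is nothing in the text to compare against line by line; but your finiteness/pigeonhole argument is the standard one for statements of this type and is, in essence, the argument in the cited source. The key points all check out: $\Col(M)$ is a subset of the finite set $T_m$ of parity-admissible vectors, so only finitely many color sets $\mathcal{S}$ are realizable; taking $\mu(\mathcal{S})$ to be the minimum order of a realization and $v(m)$ the maximum of these over realized $\mathcal{S}$ gives a bound depending only on $m$; and any Tait-colorable $M$ with $|V(M)|>v(m)$ admits a strictly smaller $N$ with $\Col(N)=\Col(M)\subseteq\Col(M)$, which is exactly the paper's definition of reducibility. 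Two small observations: your proof never uses the hypothesis that $M$ is contained in a snark $U$, so you have in fact proved the stronger statement for arbitrary cubic $m$-poles (the snark hypothesis matters only for the sharper explicit values $v(4)=2$, $v(5)=5$, etc., quoted afterwards, not for bare existence); and your $v(m)$ can be $0$ for small $m$ (indeed the paper records $v(2)=0$), so the word ``positive'' in the statement should really be ``nonnegative'' --- a defect of the statement, not of your proof.
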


The known values of $v(m)$ are $v(2)=0$, $v(3)=1$ (both are trivial results), $v(4)=2$ (Goldberg \cite{go81}), and $v(5)=5$ (Cameron, Chetwynd, and Watkins \cite{ccw87}),  whereas its exact value is unknown for $m\ge 6$. However, in this case, Karab\'a\v{s}, M\'ac\v{a}jov\'a, and  Nedela \cite{kmn13} proved that $v(6)\ge 12$. These are much relevant questions in the decompositions of snarks. According to the Jaeger-Swart's conjecture \cite{jsw80}, every snark contains a cycle-separating edge-cut of size at most six. In that case, $v(6)$ would be the most interesting unknown value of $v(m)$.
Moreover, the above definition implies that any snark $U$ with a cutset of $m$  edges and  $|V(U)|>2v(m)$  can be `reduced' to another snark with fewer vertices. See \cite{ccw87} for the cases $m = 4,5$. More recently, Fiol and Vilaltella \cite{fv15} proved that the tree and cycle multipoles are irreducible and, as a byproduct, that $v(m)$ has a linear lower bound.

Let $M_1$ and  $M_2$  be two $m$-poles with semiedges  $\epsilon_i$ and $\zeta_i$,  $i=1,2,\ldots, m$, respectively, and assume that by joining  $\epsilon_i$ with $\zeta_i$ for all $i=1,2, \ldots , m$ we obtain the cubic graph $G$.  Then we will say that  $M_1$   and  $M_2$  are {\em complementary (with respect to $G$)}, or that $M_2$  is the {\em complement} of $M_1$, written   $M_2=M_1'$.
Moreover, the $m$-poles  $M_1$  and  $M_2$  are said to be  {\em color-disjoint} if $\Col(M_1) \cap \Col(M_2)=\emptyset$. In particular this is the case when one of the $m$-poles is not Tait colorable. The analysis and synthesis of snarks is based on the following straightforward result.

\begin{proposition}
Let  $M$  and  $M'$  be two complementary multipoles of a graph  $G$. Then
$G$ is a snark  if and only if $M$ and  $M'$  are color-disjoint.
\end{proposition}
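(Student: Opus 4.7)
The statement is essentially a direct unpacking of the definitions, so the plan is to make that unpacking precise and to be explicit about the one bit of bookkeeping it requires.

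First, I would fix the semiedge indexing. Since $\Col(M)$ depends on the order in which the semiedges are listed, I commit to the ordering induced by the pairing: $\epsilon_i$ of $M$ is joined to $\zeta_i$ of $M'$ to form the $i$-th edge of $G$, and both $\Col(M)$ and $\Col(M')$ are computed with respect to this common ordering. With that convention in place, the tuples in the two sets can be compared coordinate-by-coordinate.

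The key observation I would then record is that a Tait coloring of $G$ is the same datum as a pair $(\phi,\psi)$, where $\phi$ is a Tait coloring of $M$, $\psi$ is a Tait coloring of $M'$, and $\phi(\epsilon_i)=\psi(\zeta_i)$ for every $i=1,\dots,m$. One direction is simply restriction: a proper 3-edge-coloring of $G$ restricted to $E(M)$ (respectively $E(M'))$ is proper at every vertex of $M$ (resp.\ $M'$) because vertex degrees are unchanged, and the colors of the two semiedges produced by cutting the edge $\epsilon_i\zeta_i$ agree by construction. The converse is equally immediate: given such a pair $(\phi,\psi)$, assign each edge $\epsilon_i\zeta_i$ of $G$ the common color $\phi(\epsilon_i)=\psi(\zeta_i)$, and assign to every other edge the color prescribed by $\phi$ or $\psi$; properness at each vertex is inherited from $\phi$ or $\psi$.

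Consequently, $G$ admits a Tait coloring if and only if there exists a tuple $(c_1,\dots,c_m)$ lying both in $\Col(M)$ and in $\Col(M')$, that is, if and only if $\Col(M)\cap\Col(M')\neq\emptyset$. Taking contrapositives, $G$ is class $2$---a snark in the (weak) sense relevant to this synthesis construction---if and only if $\Col(M)\cap\Col(M')=\emptyset$, which is precisely the color-disjointness of $M$ and $M'$. There is no real obstacle: the argument is purely definitional once the common semiedge ordering is fixed, and the only place one could slip is by forgetting that the two $\Col$-sets must be read off with respect to the same pairing.
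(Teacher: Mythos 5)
Your proof is correct and is exactly the definitional unpacking the paper has in mind (the paper states this result without proof, calling it straightforward): a Tait coloring of $G$ restricts to Tait colorings of $M$ and $M'$ agreeing on the paired semiedges, and conversely, so $G$ is Tait colorable iff $\Col(M)\cap\Col(M')\neq\emptyset$. Your two points of care---fixing a common semiedge ordering for both $\Col$-sets, and noting that the conclusion really concerns $G$ being class~2 rather than the paper's strong notion of snark---are both well taken.
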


Thus, the problem of constructing snarks can be reduced to the problem of finding pairs of color-disjoint multipoles. The main problem to proceed in this way is that, when the number of semiedges increases, the characterization of the set $\Col(M)$ becomes  more and more difficult. To overcome this drawback the idea is to group the semiedges in different sets, making the so-called {\em multisets}, and give a proper characterization of the `global' coloring of their elements, as we do in the next section.

\subsection{Boole colorings} \label{Boole_colorings}
The construction of cubic graphs which
cannot be Tait-colored leads to Boolean algebra, which is commonly
used in the study of logic circuits. To this end, the first author \cite{f79,f91}
introduced a generalization of the concept of `color', which describes in a
simple way the coloring (`$\0$' or `$\1$') of any set of edges
or, more abstractly, of any family ${\cal F}$ of $m$ colors chosen between
three different colors of ${\cal C}=\{1,2,3\}$, such that color $i\in {\cal C}$ appears
$m_i$ times. This situation can be represented by the coloring-vector
$\m=(m_1,m_2,m_3)$, where $m=m_1+m_2+m_3$. Then, we say that $\cal F$ has {\it
Boole-coloring} $\0$, denoted by $Bc({\cal F})=\0$, if
$$
m_1\equiv m_2\equiv m_3 \equiv m \quad (\mod 2),
$$
whereas $\cal F$ has {\it Boole-coloring} $\1$ (more specifically $\1_a$), denoted by  $Bc({\cal F})=\1$ (or $Bc({\cal F})=\1_a$),
if
$$
m_a+1\equiv m_b\equiv m_c\equiv m+1 \quad (\mod 2),
$$
where, as before, $a,b,c$ represent the colors $1,2,3$ in any order. See  \cite{ff84,f91} for more information.

From these definitions, the Boole-coloring of an edge $e\in E$
with color $c(e)=a\in\cal{C}$ is $Bc(e)=Bc(\{a\})=\1_a$, and the
Boole-coloring of a vertex $v\in V$, denoted by $Bc(v)$, is defined
as the Boole-coloring of its incident edges, which can have either different
or the same colors. In this context, it is worth noting the
following facts:
\begin{itemize}
\item[{\bf F1.}]
If $deg(v)=1$, then $Bc (v)=\1_{a}$ if and only if the incident edge
to vertex $v$ has color $a\in\cal{C}$.
\item[{\bf F2.}]
If $deg(v)=2$, then $Bc (v)=\0$ if both incident edges to
vertex $v$ have the same color, and $Bc (v)=\1$ if not.
\item[{\bf F3.}]
If $deg(v)=3$, then $Bc (v)=\0$ if and only if the three incident edges
to vertex $v$ have three different colors. Thus, in a
Tait coloring of a cubic graph, all its vertices have Boole-coloring $\0$.
\end{itemize}

\begin{table}
\begin{center}
\begin{tabular}{|c||c|c|c|c|}
\hline
$+$ & $\0$ & $\1_1$ & $\1_2$ & $\1_3$\\
\hline\hline
$\0$ & $\0$ & $\1_1$ & $\1_2$ & $\1_3$\\
\hline
$\1_1$ & $\1_1$ & $\0$ & $\1_3$ & $\1_2$\\
\hline
$\1_2$ & $\1_2$ & $\1_3$ & $\0$ & $\1_1$\\
\hline
$\1_3$ & $\1_3$ & $\1_2$ & $\1_1$ & $\0$\\
\hline
\end{tabular}
\caption{Klein's group of Boole-colorings.}
\label{tableKlein}
\end{center}
\end{table}

Moreover, a natural sum operation can be defined in the set ${\cal
B}=\{\0,\1_{1}$, $\1_{2},\1_{3}\}$ of Boole-colorings in the following
way: Given the colorings $X_1$ and $X_2$ represented, respectively, by the
coloring-vectors $\m_1=(m_{11},m_{12},m_{13})$ and
$\m_2=(m_{21},m_{22},m_{23})$, we define the sum $X=X_1+X_2$ as the
coloring represented by the coloring vector $\m=\m_1+\m_2$. Then,
$(\cal{B},+)$ is isomorphic to the Klein group, with $\0$ as
identity, $\1_{a}+\1_{a}=\0$, and $\1_{a}+\1_{b}=\1_{c}$; see
Table~\ref{tableKlein}.

Notice that, since every element coincides with its inverse,
$m\1_{a}=\1_{a}+\1_{a}+\stackrel{m}{\cdots}+\1_{a}$ is $\0$ if $m$
is even and $\1_{a}$ if $m$ is odd. From this simple fact, we can
imply the following result (see Fiol~\cite{f95}), which is very useful in the further
development of the theory, and it can be regarded as another version
of the Parity Lemma (see, e.g., Isaacs~\cite{i75} or Izbicki \cite{Izbicki_66})

\begin{lemma}
\label{parity lemma}
 Let $G$ be a subcubic graph with $n$ vertices having a
$3$-edge-coloring, such that $n_{i}$
vertices have Boole-coloring $\1_{i}$, for $i\in \cal{C}$, with $n'=n_1+n_2+n_3\le n$. Then,
\begin{equation}\label{parity-lemma}
n_{1} \equiv  n_{2} \equiv  n_{3} \equiv  n'\quad (\mod 2).
\end{equation}
\end{lemma}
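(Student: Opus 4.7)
The plan is a double-counting of the sum $S = \sum_{v\in V} Bc(v)$ inside the Klein group $(\mathcal{B},+)$.

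First, I would verify the key compatibility: for every vertex $v$, the definition of $Bc(v)$ given in facts \textbf{F1}--\textbf{F3} agrees with the algebraic formula $Bc(v) = \sum_{e\ni v} \1_{c(e)}$. This is just a case check on $\deg(v)\in\{1,2,3\}$, using that $\1_a+\1_a=\0$ and $\1_1+\1_2+\1_3=\0$ in the Klein group (Table~\ref{tableKlein}). Granted this, I would swap summation order:
\[
S \;=\; \sum_{v\in V}\sum_{e\ni v} \1_{c(e)} \;=\; \sum_{e\in E}\sum_{v\in e}\1_{c(e)} \;=\; \sum_{e\in E}\bigl(\1_{c(e)}+\1_{c(e)}\bigr) \;=\; \sum_{e\in E}\0 \;=\; \0,
\]
since every edge has exactly two endpoints.

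Second, I would evaluate $S$ using the hypothesis on the distribution of vertex Boole-colorings. Vertices with $Bc(v)=\0$ contribute nothing, and the remaining $n'$ vertices contribute $n_i$ copies of $\1_i$, so
\[
S \;=\; n_1\1_1 + n_2\1_2 + n_3\1_3.
\]
Since every element of $\mathcal{B}$ is its own inverse, $n_i\1_i$ is $\0$ if $n_i$ is even and $\1_i$ if $n_i$ is odd. Equating with $S=\0$ from the first step and doing the short case analysis over the parities of $(n_1,n_2,n_3)$, one finds that the only possibilities are ``all even'' or ``all odd'' (the cases with one or two odd values give a nonzero element of $\mathcal{B}$). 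Hence $n_1\equiv n_2\equiv n_3\pmod 2$, and this common parity clearly coincides with the parity of $n'=n_1+n_2+n_3$, giving \eqref{parity-lemma}.

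There is really no serious obstacle here: the argument is a handshake-style identity transported into the Klein group. The only place where one must be careful is the first step, i.e., making sure that vertices of degree $1$ or $2$ (allowed because $G$ is only subcubic) are correctly handled by the formula $Bc(v)=\sum_{e\ni v}\1_{c(e)}$; once \textbf{F1}--\textbf{F3} are read in this additive form, the proof is a one-line double count followed by a parity case analysis.
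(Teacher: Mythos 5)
Your proposal is correct and follows essentially the same route as the paper's proof: both compute $\sum_{v\in V}Bc(v)$ in the Klein group two ways, observing that each edge contributes $2\1_{c(e)}=\0$ so the total is $\0$, and then read off the common parity of $n_1,n_2,n_3$ (and hence of $n'$) from $n_1\1_1+n_2\1_2+n_3\1_3=\0$. Your explicit check that \textbf{F1}--\textbf{F3} agree with the additive formula $Bc(v)=\sum_{e\ni v}\1_{c(e)}$ is a detail the paper leaves implicit, but it is the same argument.
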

\begin{proof}
Indeed, since the Boole-coloring of each vertex is the sum of the
Boole-colorings of its incident edges, and recalling that $\sum_{v\in V}deg(v)=2|E|$, we can write
$$
\sum_{v\in V}Bc (v) =
\sum^{3}_{i=1}n_{i}\1_{i}+(n-n')\0=\sum^{3}_{i=1}n_{i}\1_{i}=\sum^{}_{e\in
E}2Bc (e) = \0,
$$
but this equality is only satisfied if, for every $i\in \cal{C}$, $n_{i}\1_{i}=\0$ or
$n_{i}\1_{i}=\1_{i}$. Then, from
$n_{1}+n_{2}+n_{3}=n'$, we get the result.
\end{proof}

Note that, if $G$ is a cubic graph with a given 3-edge-coloring, then the result apply
with $n_i$ being the number of conflicting vertices of type $\1_i$, $i=1,2,3$.

As a direct consequence of the lemma, we also get the following:
\begin{corollary}
There is no edge-coloring of a graph $G$ having only one vertex with
Boole-coloring $\1$ $($and the other vertices with Boole-coloring $\0)$.
\end{corollary}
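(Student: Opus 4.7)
The plan is to derive this immediately from the Parity Lemma (the statement labeled \texttt{parity-lemma}) just proved, via a one-line contradiction. Suppose, for the sake of contradiction, that there is a $3$-edge-coloring of $G$ in which exactly one vertex $v$ has Boole-coloring $\1$ and every other vertex has Boole-coloring $\0$. Without loss of generality, say $Bc(v)=\1_a$ for some $a\in\{1,2,3\}$. Then, in the notation of the Parity Lemma, we have $n_a=1$ and $n_b=n_c=0$ for the two other colors $b,c$, so $n'=n_1+n_2+n_3=1$.

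The Parity Lemma requires $n_1\equiv n_2\equiv n_3\equiv n'\pmod 2$. Substituting yields $1\equiv 0\equiv 0\equiv 1\pmod 2$, which is absurd. This contradiction shows no such edge-coloring exists.

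The step to be careful about is purely notational: one must verify that the hypothesis of the Parity Lemma (a subcubic graph with $n$ vertices, of which $n'$ have nonzero Boole-coloring) is indeed met by the assumed coloring. This is immediate since $G$ is a graph, the coloring is an edge-coloring, and we are explicitly setting $n_a=1$, $n_b=n_c=0$. There is no real obstacle; the corollary is a direct numerical specialization of the lemma to the case $n'=1$.
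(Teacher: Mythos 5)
Your proof is correct and matches the paper's approach exactly: the paper presents this corollary as a direct consequence of the Parity Lemma (Lemma \ref{parity lemma}), which is precisely the specialization $n_a=1$, $n_b=n_c=0$, $n'=1$ violating $n_1\equiv n_2\equiv n_3\equiv n'\pmod 2$ that you spell out. Nothing is missing.
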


Similar results are obtained in the context of the resistance in \cite{s98}. There it is shown that the uncolored edges can be classified
as in Lemma \ref{parity lemma}. Analogously, there is no 4-edge-coloring of a cubic graph with a color class of cardinality 1.

As previously mentioned, the concept of Boole  colorings allows us to use the theory of Boolean algebra for the construction and characterization of infinite families of snarks. An example
is the family obtained by joining adequately an odd number of copies
of the multiset shown in Figure~\ref{fig:multipols} (left).
This structure behaves as a NOT gate of logic circuits
in the sense that, its edges and semi-edges having been Tait-colored,
the colorings $X_1$ and $X_2$ are conjugated one to each other, namely $X_2=\0$ (respectively, $X_2=\1$) if and only if $X_1=\1$ (respectively, $X_1=\0$). This is satisfied for any coloring of semi-edge $e$. Two examples of this fact are shown in  Figure~\ref{fig:multipols} (center and right). If, as previously stated,
we join an odd number of these multipoles in a circular configuration, adding some vertices to connect semi-edges $e$, any attempt at Tait coloring
will lead to a conflict, and hence the graph is a snark. An example with five multipoles can be seen in Figure~\ref{fig:flower-snark}.
This family of snarks, called \emph{flower snarks}, was proposed by
Loupekhine (see Isaacs~\cite{i76}). As commented, the first infinite families
of snarks were given by Isaacs~\cite{i75}, but they can also be obtained by using Boole-colorings. More details on this technique can
be found in Fiol~\cite{f91}.

%%%%%%%%%%%%%%%%%%%%%%%%%%%%%%%%%%%
\begin{figure}[t]
\begin{center}
\includegraphics[width=13.5cm]{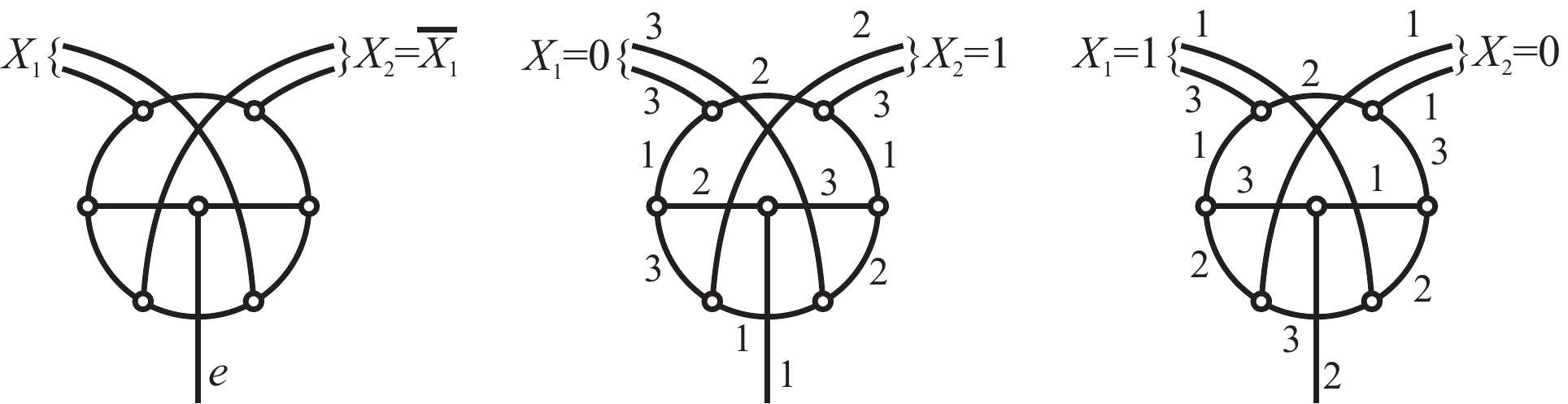}
\caption{Multipoles and the NOT gate.}
\label{fig:multipols}
\end{center}
\end{figure}
%%%%%%%%%%%%%%%%%%%%%%%%%%%%%%%%%%%
%%%%%%%%%%%%%%%%%%%%%%%%%%%%%%%%%%%
\begin{figure}[t]
\begin{center}
\includegraphics[width=5cm]{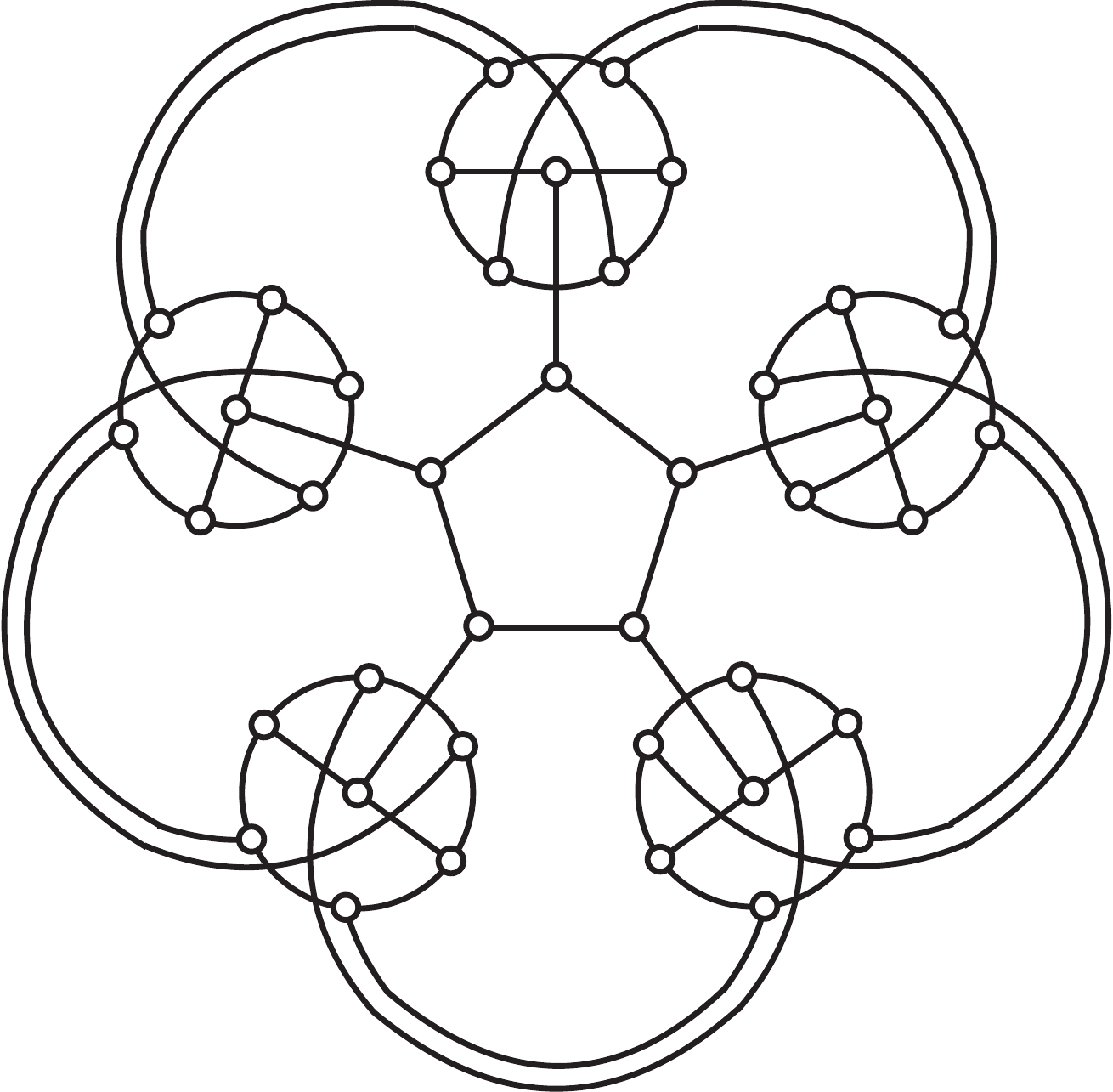}
\caption{A Loupekhine snark with 5 NOT gates.}
\label{fig:flower-snark}
\end{center}
\end{figure}
%%%%%%%%%%%%%%%%%%%%%%%%%%%%%%%%%%%

\subsection{Max 2- and 3-colorable subgraphs} \label{max_2-3-col_subgraphs}

Maximum $2$- and maximum $3$-edge-colorable subgraphs of cubic graphs were first studied by Albertson and Haas \cite{alberthaas96}.

The resistance measures the minimum number of uncolored edges in a 3-edge-coloring of a bridgeless cubic class 2 graph $G$.
We can ask a similar question with respect to 2-edge-colorable subgraphs of $G$, since if there is a 2-edge-colorable subgraph $H$
with $|E(H)| = \frac{2}{3}|E(G)|$, then $G$ is a class 1 graph.
Let $c_2 = \max \{|E(H)|: H \text{ is a subgraph of $G$ and } \chi'(H) = 2\}$, and $r_2(G) = \frac{2}{3}|E(G)| - c_2(G)$.
The following theorem shows that $r_2(G)$ is also a measure of edge-uncolorability.

\begin{theorem}[\cite{s04}]
If $G$ is a bridgeless cubic graph, then
\begin{itemize}
\item[$(i)$]
$r_2(G)=1$ if and anly if $r(G)=2$.
\item[$(ii)$]
 $\frac{1}{2}r(G) \leq r_2(G) \leq \min \{\frac{2}{3}r(G), \frac{1}{2} \omega(G)\}$, and the bounds are attained.
\end{itemize}
\end{theorem}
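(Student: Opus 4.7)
The plan is to prove part $(ii)$ first and derive $(i)$ as a short corollary. Throughout, set $n=|V(G)|$, so $|E(G)|=\tfrac{3n}{2}$ and $\tfrac{2}{3}|E(G)|=n$, giving $r_2(G)=n-c_2(G)$. I would begin with the structural observation that any spanning subgraph $H\subseteq G$ with $\chi'(H)\le 2$ is a vertex-disjoint union of isolated vertices, paths, and even cycles; writing $n_0$ for the number of isolated vertices of $H$ and $p$ for the number of path components of length at least one, a direct count gives $|E(H)|=n-n_0-p$, so $r_2(G)=\min\{n_0+p\}$ over such $H$.

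For the upper bound $r_2\le\tfrac{2}{3}r$, I would fix a proper $4$-edge-coloring of $G$ whose smallest class $M_4$ has $r=r(G)$ edges; then the smallest of $M_1,M_2,M_3$ contains at most $(\tfrac{3n}{2}-r)/3=\tfrac{n}{2}-\tfrac{r}{3}$ edges, so the union of the other two is a $2$-edge-colorable subgraph with at least $n-\tfrac{2r}{3}$ edges. For the key bound $r_2\le\tfrac{1}{2}\omega$, fix a $2$-factor $F$ of $G$ achieving $\omega(G)$; since $G$ is cubic, $M=E(G)\setminus F$ is a perfect matching, and a maximum matching inside $F$ has exactly $\tfrac{n}{2}-\tfrac{\omega}{2}$ edges (each odd cycle of length $\ell$ contributing $(\ell-1)/2$, each even cycle $\ell/2$). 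The disjoint union of $M$ with such a matching is a union of two matchings, hence a $2$-edge-colorable subgraph with $n-\tfrac{\omega}{2}$ edges.

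For the lower bound $r_2\ge\tfrac{1}{2}r$, start from a maximum $H$ with its $2$-edge-coloring in $\{1,2\}$ and try to assign colour $3$ to every edge of $F=E(G)\setminus E(H)$: conflicts arise only at isolated vertices of $H$ (three pending $F$-edges) and at path endpoints (two pending $F$-edges), since interior vertices of paths and cycles already see the colours $1,2$ and meet just one $F$-edge. Removing two $F$-edges incident to each isolated vertex and one incident to each path endpoint produces a set $S\subseteq F$ with $|S|\le 2n_0+2p=2r_2(G)$ whose removal leaves a proper $3$-edge-coloring, so $r(G)\le 2r_2(G)$. Attainment of $r_2=\tfrac{r}{2}$ and $r_2=\tfrac{\omega}{2}$ is witnessed simultaneously by the Petersen graph ($r=\omega=2$, $r_2=1$); for $r_2=\tfrac{2r}{3}$ I would cite an explicit construction from \cite{s04}.

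Part $(i)$ is then immediate from $(ii)$: if $r(G)=2$, then $1=\tfrac{r}{2}\le r_2\le\tfrac{2r}{3}$ forces $r_2=1$; conversely, $r_2(G)=1$ implies $r\le 2r_2=2$ and $r\ge 1$ (since $G$ is class $2$), while the parity lemma (Lemma \ref{parity lemma}) precludes $r=1$, so $r=2$. The hard part will be the inequality $r_2\le\omega/2$: the naive strategy of deleting one edge per odd cycle of $F$ only gives $r_2\le\omega$, and halving this requires recognising that an optimal $H$ should be assembled as a union of two matchings drawn from complementary parts of $G$ rather than carved out of a single $2$-factor.
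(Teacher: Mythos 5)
The paper does not prove this theorem at all --- it is stated as a quotation of Steffen \cite{s04} --- so there is no in-paper argument to compare against; your proposal has to stand on its own, and it does. The three main steps are all sound: the identity $r_2(G)=\min\{n_0+p\}$ follows correctly from the path/even-cycle structure of subgraphs of maximum degree $2$ and chromatic index at most $2$; the bound $r_2\le\tfrac{2}{3}r$ by discarding the smallest of the three large colour classes is a correct averaging argument; the bound $r_2\le\tfrac{1}{2}\omega$ via the complementary perfect matching of a minimum-oddness $2$-factor together with a maximum matching inside that $2$-factor (which indeed has $\tfrac{n}{2}-\tfrac{\omega}{2}$ edges) is the right construction and, as you note, the non-obvious one; and the lower bound $r\le 2r_2$ by colouring $E(G)\setminus E(H)$ with a third colour and deleting at most two edges per isolated vertex and one per path endpoint is correct, since $n_0+p=r_2(G)$. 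The deduction of $(i)$ from $(ii)$ is also fine, including the use of the parity lemma to exclude $r(G)=1$. The only soft spot is the claim that all bounds are attained: the Petersen graph settles $r_2=\tfrac{1}{2}r$ and $r_2=\tfrac{1}{2}\omega$, but attainment of $r_2=\tfrac{2}{3}r$ is deferred to \cite{s04} rather than exhibited, so that part of the statement remains cited rather than proved. If you want the argument fully self-contained, you would need to supply such an extremal example; otherwise the proof is complete and, as far as one can tell, follows essentially the same decomposition-based route as Steffen's original.
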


%%%%%%%%%%%%%%%%%%%%%%%%%%%% Factors
\section{Factors}

 In this section, we review some measures of edge-uncolorability of a cubic graph $G$ which depend on the properties of the sets of its $1$-factors and $2$-factors.

\subsection{1-factors}

We start from an unusual statement of Vizing's Theorem for cubic graphs:

{\it
The edge-set of every cubic graph can be written as a union of at most four of its matchings.
}

What happens if we would like to replace matchings with perfect matchings (i.e., 1-factors)? Can we prove an analogous theorem?

First of all, we must remark that previous question is only relevant in the class of bridgeless cubic graphs. Indeed, if a cubic graph $G$ has a bridge then some edge of $G$ does not lie in a 1-factor of $G$, hence we cannot obtain the edge-set of $G$ by union of $1$-factors.

Let $G$ be a bridgeless cubic graph, by Petersen's Theorem from 1891 \cite{p81}, $G$ has a 1-factor. Sch\"onberger \cite{Sch} refined this result by proving that
for every edge $e$, there is a 1-factor of $G$ which contains $e$. Sch\"onberger result implies that the edge-set of every bridgeless cubic graph $G$ can be obtained as a union of a finite set of $1$-factors. We denote by $\chi'_e(G)$ the minimum cardinality among all such sets of $1$-factors. The parameter $\chi'_e(G)$ is called {\em excessive index} in \cite{bc07} and {\em perfect matching index} in \cite{fouqvan09}.
Note that if a cubic graph has two disjoint 1-factors, then $\chi'(G)=\chi'_e(G)=3$.
Hence, $\chi'(G)=4$ if and only if any two 1-factors of $G$ have a non-empty intersection.

In an attempt of mimic Vizing's result, one could try to prove that $\chi'_e(G) \leq 4$ for every bridgeless cubic graph $G$, but such an attempt is guaranteed to fail. Indeed, the union of five disjoint $1$-factors of the Petersen graph is necessary to obtain its edge-set.

If Berge's conjecture (Conjecture \ref{Berge_Conjecture}) holds, then bridgeless cubic graphs which are not $3$-edge-colourable are divided into two classes according to they have excessive index $4$ or $5$. Hence, we can consider the excessive index as a measure of $3$-edge-uncolorability finer than chromatic index.
It is interesting that cubic graphs $G$ with $\chi'_e(G)=4$ share some properties with $3$-edge-colourable cubic graphs. For instance a shortest cycle cover (i.e., a set of cycles covering all the edges) of length $\frac{4}{3}|E(G)|$ \cite{s15}. Moreover, the 5-cycle double cover conjecture is true for such graphs (see Hou, Lai, and Zhang \cite{Hou_etal_2012}, and Steffen \cite{s15}). In other words, a possible counterexample for the 5-cycle double cover conjecture should be look for in the class of cubic graphs with excessive index $5$.
Abreu, Kaiser, Labbate, and Mazzuoccolo \cite{aklm16} suggest a relation between excessive index and circular flow number of a cubic graph (see Section \ref{sec:flows} for a definition). In particular, it is remarked that all known snarks with excessive index 5 have circular flow number at least 5. In other words, if a snark is critical with respect to Conjecture \ref{Berge_Conjecture}, then it seems to be critical also for the 5-flow conjecture. We believe that proving a relation between these two famous conjectures could be a very interesting result.

All previous arguments stress the fact that the class of snarks having excessive index five has a particular relevance and computational evidence shows that these snarks are quite rare. The smallest example, but the Petersen graph, has order $34$ and it was found by H\"agglund \cite{Hagglund_2012}. Starting from H\"{a}gglund's example, some infinite families of snarks with excessive index five have been recently constructed by Esperet and Mazzuoccolo \cite{em14}, Abreu, Kaiser, Labbate, and Mazzuoccolo \cite{aklm16}, and Chen \cite{chen16}.
Note that all such snarks are not cyclically $5$-edge-connected, it remains open the question about the existence of a cyclically $5$-edge-connected snark with excessive index 5 different from the Petersen graph (see Problem \ref{p_exc_index}). Such graphs had already been studied by Sinclair in \cite{sinclair_97}. He proves some decomposition results for graphs which cannot be covered by four perfect matchings. That paper is written in the language
of weights and cycle covers. The equivalence to the cover with perfect matchings follows from the fact that a snark $G$ can be covered by four
perfect matchings if and only if it has a cycle cover such that each edge is in one or in two circuits and the edges which are in two circuits form a
1-factor of $G$, see Theorem 3.5 of Hou, Lai, and Zhang \cite{Hou_etal_2012}.

%%%%%%%%%%%%%%%%%%%%%%%%%%%%%%%%%%%

Conjecture \ref{Berge_Conjecture} is largely open and it remains open even if we replace 5 with an arbitrary larger constant $k$. It is not hard to show, by using Edmonds' matching polytope theorem \cite{Edm65}, that the edges of any bridgeless cubic graph of order $n$ can be obtained as union of $ \log(n)$ perfect matchings. As far as we know, the best bound known (still logarithmic in the order of $G$) is the one given by Mazzuoccolo in \cite{m13} as a corollary of the technique introduced by Kaiser, Kr\'al and Norine in \cite{kkn06}.

%%%%%%%%%%%%%%%%%%%%%%%%%%%%%%%%%%%

In the remaining part of this section, we review known results about the cardinality and the structure of the union of a prescribed number of $1$-factors in a bridgeless cubic graph.
More precisely, let $G$ be a bridgeless cubic graph. Consider a list of $k$ 1-factors of $G$. Let $E_i$ be the
set of edges contained in precisely $i$ members of the $k$ 1-factors. Let $\mu_k(G)$ be the smallest $|E_0|$ over all lists of $k$ 1-factors of $G$ and set $m_k(G)= 1-\frac{\mu_k(G)}{|E(G)|}$, that is the maximum possible fraction of edges in a union of $k$ 1-factors of $G$.

We can restate some of the previous results about the excessive index in terms of these parameters.
For instance, a bridgeless cubic graph $G$ is $3$-edge-colorable if and only if $m_3(G)=1$ (or equivalently $\mu_3(G)=0$); and $m_5(G)=1$ for all $G$ is exactly Conjecture \ref{Berge_Conjecture}. Furthermore, if $G$ is a cubic bridgeless class 2 graph, then $\mu_3(G) \geq 3$, see \cite{s15}.

Kaiser, Kr\'al and Norine \cite{kkn06} proved that $m_2(G)\geq \frac{3}{5}$ and this result is the best possible, since the union of any two $1$-factors of the Petersen graph $P$ contains $9$ of the $15$ edges of the graph. It is also proved that $m_3(G)\geq \frac{27}{35}$, but it is conjectured that $m_3(G) \geq \frac{4}{5}=m_3(P)$ for every bridgeless cubic graph $G$.

Let $G$ be a cubic graph and $S_3$ be a list of three 1-factors $M_1, M_2, M_3$ of $G$. Let ${\cal M} = E_2 \cup E_3$, ${\cal U} = E_0$ and
$|{\cal U}| = k$. The edges of $E_0$ are also called the uncovered edges.
The  $k$-core of $G$ with respect to $S_3$ (or to $M_1, M_2, M_3$)  is the subgraph $G_c$ of $G$
which is induced by ${\cal M} \cup {\cal U}$; that is, $G_c = G[{\cal M} \cup {\cal U}]$.
If the value of $k$ is irrelevant, then we
say that $G_c$ is a core of $G$. If $M_1 = M_2 = M_3$, then $G_c = G$. A core $G_c$ is  proper if $G_c \not = G$.
If $G_c$ is a cycle, then we say that $G_c$ is a  cyclic core. In \cite{s15} it is shown that every bridgeless cubic graph has a proper core and therefore, every $\mu_3(G)$-core is proper. Cores of cubic graphs have bee studied by Jin and Steffen \cite{js15,s15}.

Let $\gamma_2(G) = \min \{|M_1 \cap M_2| : M_1 \text{ and } M_2 \text{ are 1-factors of } G\}$.
Then, $\mu_2(G) = \gamma_2(G) + \frac{1}{3}|E(G)|$.

\begin{theorem} \cite{js15} \label{weak_bound} Let $G$ be a bridgeless cubic graph. If $G$ is not 3-edge-colorable,
then $\omega(G) \leq 2\gamma_2(G)\leq \mu_3(G)-1$. Furthermore, if $G$ if $G$ has cyclic $\mu_3(G)$-core, then
$\gamma_2(G) \leq \frac{1}{3}\mu_3(G)$.
\end{theorem}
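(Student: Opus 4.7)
My plan is to prove the three inequalities separately, in each case exploiting the cubic structure together with a well-chosen extremal pair or triple of matchings. For $\omega(G)\le 2\gamma_2(G)$, I fix 1-factors $M_1,M_2$ attaining $|M_1\cap M_2|=\gamma_2(G)$, write $I=M_1\cap M_2$, and consider the 2-factor $F_1=E(G)\setminus M_1$. A local check at each vertex shows that at every $u\in V(I)$ the two $F_1$-edges at $u$ both lie in $E(G)\setminus(M_1\cup M_2)$, while at every $v\notin V(I)$ exactly one $F_1$-edge at $v$ lies in $M_2\setminus M_1$ and the other in $E(G)\setminus(M_1\cup M_2)$. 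Hence on any circuit $C$ of $F_1$ the edges of $M_2\setminus M_1$ form a perfect matching on $V(C)\cap(V(G)\setminus V(I))$, so $|V(C)\cap(V(G)\setminus V(I))|$ is even and the length of $C$ has the same parity as $|V(C)\cap V(I)|$. Every odd circuit of $F_1$ therefore contains at least one vertex of $V(I)$, and since $|V(I)|=2|I|=2\gamma_2(G)$, this gives $\omega(G)\le\omega(F_1)\le 2\gamma_2(G)$.

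For $2\gamma_2(G)\le\mu_3(G)-1$, I take 1-factors $M_1,M_2,M_3$ realising $\mu_3(G)$ and set $a_{ij}=|M_i\cap M_j|$, $b=|M_1\cap M_2\cap M_3|$. Inclusion-exclusion together with $|M_i|=|V(G)|/2$ yields
\[
\mu_3(G)=|E_0|=a_{12}+a_{13}+a_{23}-b,
\]
so $a_{ij}\ge\gamma_2(G)$ and $b\le\min_{i<j}a_{ij}$ give the weak bound $\mu_3(G)\ge 2\gamma_2(G)$. To upgrade to a strict inequality, I argue that equality forces $a_{12}=a_{13}=a_{23}=b=\gamma_2(G)$, hence $M_i\cap M_j=M_1\cap M_2\cap M_3=:I$ for every pair. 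In that case every $u\in V(I)$ has its two non-$I$-edges in $E_0$, while every $v\notin V(I)$ has its three edges partitioned by the pairwise disjoint matchings $M_1\setminus I,M_2\setminus I,M_3\setminus I$; an edge between $V(I)$ and its complement would then have to be simultaneously in $E_0$ and in some $M_i$, a contradiction. Thus $V(I)$ is a union of components of $G$, and connectedness forces either $V(I)=\emptyset$ (so $\gamma_2(G)=0$ and $G$ admits a Tait colouring, contradicting class 2) or $V(I)=V(G)$ (so $M_1=M_2=M_3$, contradicting the minimality of the triple since a bridgeless cubic graph has at least two 1-factors by Kotzig's theorem). Hence $\mu_3(G)\ge 2\gamma_2(G)+1$.

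For the cyclic-core statement, I reuse the triple $M_1,M_2,M_3$ and note that any $e\in E_3=M_1\cap M_2\cap M_3$ would force each endpoint of $e$ to have its other two incident edges in $E_0$, giving it degree three in $G_c$. Since $G_c$ is a cycle, every vertex of $G_c$ has degree two, so $E_3=\emptyset$. The further identity $a_{12}+a_{13}+a_{23}=|E_2|+3|E_3|=|E_0|+|E_3|$ then collapses to $a_{12}+a_{13}+a_{23}=\mu_3(G)$, from which $\gamma_2(G)\le\min_{i<j}a_{ij}\le\tfrac13\mu_3(G)$. The hard point is the strict improvement in the middle inequality: ruling out the degenerate configuration in which the three matchings collapse to a single 1-factor requires both the component argument above and the fact that a bridgeless cubic graph always carries at least two 1-factors.
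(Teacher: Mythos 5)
The paper does not actually prove this statement: it is quoted verbatim from Jin and Steffen \cite{js15} with only a citation, so there is no in-text proof to compare against. Judged on its own, your argument is correct and is essentially a faithful reconstruction of the arguments behind the cited result. The first part (complementary $2$-factor of $M_1$, with $(M_2\setminus M_1)$ inducing a perfect matching on $V(C)\setminus V(I)$ for each circuit $C$, forcing every odd circuit to meet $V(I)$) is sound. The second part correctly derives $\mu_3(G)=a_{12}+a_{13}+a_{23}-b$ and the equality analysis forcing $M_i\cap M_j=I$ for all pairs, after which the "no edges between $V(I)$ and its complement" observation plus connectedness kills both degenerate cases ($\gamma_2=0$ contradicts class~2; $M_1=M_2=M_3$ contradicts minimality since a bridgeless cubic graph has two distinct $1$-factors). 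The third part ($E_3=\emptyset$ for a cyclic core via the degree-$3$ obstruction, then $\sum a_{ij}=|E_0|$) is also correct.

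Three small points you should tidy up. First, the existence of two distinct $1$-factors is more naturally attributed to Sch\"onberger's theorem (every edge of a bridgeless cubic graph lies in a $1$-factor, cited as \cite{Sch} in this paper), not to Kotzig; since $|M|=\tfrac{1}{3}|E(G)|$ for any single $1$-factor, one $1$-factor cannot contain every edge. Second, in the last part you say you "reuse the triple" from the second part, but the hypothesis only guarantees that \emph{some} $\mu_3$-realising triple has a cyclic core; you must work with that triple (the identity $\mu_3=\sum a_{ij}-b$ holds for any minimising triple, so nothing else changes). Third, your dichotomy $V(I)=\emptyset$ or $V(I)=V(G)$ uses connectedness of $G$, which is a standard but unstated assumption here and worth making explicit.
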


\begin{theorem} \cite{js15} \label{omega-mu3}
If $G$ is a bridgeless cubic graph, then $\omega(G)\leq \frac{2}{3}\mu_3(G)$.
\end{theorem}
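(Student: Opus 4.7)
The plan is to fix three 1-factors $M_1, M_2, M_3$ realising $\mu_3(G)$, write $E_0$ for the uncovered edges (so $|E_0| = \mu_3$), and study the three 2-factors $F_i = G - M_i$. Let $O_i$ denote the odd circuits of $F_i$ and set $u_i(C) := |E(C) \cap E_0|$ and $s_i := |\{C \in O_i : u_i(C) = 1\}|$. Because $\omega(G) \le |O_i|$ for every $i$, the goal becomes $\min_i |O_i| \le \tfrac{2}{3}\mu_3$; my plan is to prove two inequalities and combine them: $|O_i| \le \tfrac{1}{2}(\mu_3 + s_i)$ for every $i$, and $s_1 + s_2 + s_3 \le \mu_3$. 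Averaging then forces $\min_i s_i \le \mu_3/3$, and substituting back yields $\omega(G) \le \tfrac{1}{2}(\mu_3 + \mu_3/3) = \tfrac{2}{3}\mu_3$.

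For the first inequality I would first show that every $C \in O_i$ contains at least one uncovered edge. At each vertex $v$ the labels of the three incident edges (indicating in which $M_r$ each edge lies) partition $\{1,2,3\}$; if $C$ contained no edge of $E_0$ then, since edges of $C$ avoid $M_i$, each vertex of $C$ would be forced to be ``good'', i.e.\ to carry labels $\{i\},\{j\},\{k\}$. The two edges of $C$ at each such vertex would then alternate between labels $\{j\}$ and $\{k\}$, making $C$ even---a contradiction. Hence $u_i(C) \ge 1$ for all $C \in O_i$, and combining this with $\sum_{C \in O_i} u_i(C) \le \mu_3$ via a two-case split gives $\mu_3 \ge s_i + 2(|O_i| - s_i)$, which rearranges to the claimed bound.

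The substantive step is $s_1 + s_2 + s_3 \le \mu_3$, which I plan to prove by showing that each $e \in E_0$ contributes to at most one $s_i$. Write $e = xy$. If $x$ or $y$ is \emph{bad} (labels $\{1,2,3\},\emptyset,\emptyset$), both $F_i$-edges at that endpoint are uncovered, so the $F_i$-circuit through $e$ has $u_i(C) \ge 2$ for every $i$ and $e$ is counted $0$ times. Otherwise $x$ and $y$ are medium, with label-partitions $\{A_x,B_x\},\{C_x\},\emptyset$ and $\{A_y,B_y\},\{C_y\},\emptyset$. A parity count of the vertices of $C$ whose $M_j$-edge leaves $C$ (for $j \ne i$) shows that the condition $u_i(C) = 1$ for an odd $C$ forces all vertices of $C \setminus \{x,y\}$ to be good and both $x, y$ to be of ``$\alpha^{(i)}$-type'', meaning their $M_i$-edge carries a two-element label through $i$ with distinct complementary colours. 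This translates into $i \in \{A_x,B_x\} \cap \{A_y,B_y\}$ together with $\{A_x,B_x\} \ne \{A_y,B_y\}$; since two distinct 2-subsets of $\{1,2,3\}$ meet in exactly one element, $i$ is uniquely determined when it exists.

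The delicate point---and the one I expect to require the most care---is excluding the ``b1'' configuration in which $x$ or $y$ has $M_i$-edge labelled $\{i\}$ with companion labels $\{j,k\},\emptyset$. At such a vertex, $M_j$ and $M_k$ are jointly represented by the $\{j,k\}$-edge, which lies in $C$, so this vertex contributes $0$ to the parity counts that would make $C$ odd; consequently with $u_i(C) = 1$ the circuit either becomes even or requires a second uncovered edge, in either case contradicting the hypothesis. Once b1-endpoints (and bad endpoints) are ruled out, the uniqueness of $i$ gives $\sum_i s_i \le |E_0| = \mu_3$, and the averaging step completes the proof.
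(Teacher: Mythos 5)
The survey states this theorem without proof, citing Jin and Steffen \cite{js15}, and your argument is correct and follows essentially the same route as the original: every odd circuit of $G-M_i$ must meet $E_0$, the parity of a circuit of $G-M_i$ is controlled by the number of its vertices whose $M_i$-edge also lies in $M_j$ (resp.\ $M_k$), an odd circuit with a unique uncovered edge $xy$ therefore forces the doubleton labels at $x$ and $y$ to be distinct $2$-sets both containing $i$ (so each uncovered edge contributes to at most one $s_i$), and averaging over $i$ gives the $\frac{2}{3}$ bound. All the individual steps check out, including the exclusion of the $\{i\},\{j,k\},\emptyset$ endpoint configuration via the parity identity.
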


In \cite{js15} it is also proved that there are graphs $G$ with $\omega(G) = \frac{2}{3}\mu_3(G)$, and that those graphs have very
specific structural properties. Indeed, if $G$ is a bridgeless cubic graph with $\omega(G) = \frac{2}{3}\mu_3(G)$, then it satisfies Conjecture \ref{Fan_Raspaud}.

%Oddness results for some conjectures: Fan-Raspaud \cite{Fan_Raspaud_94}.

\subsection{2-factors}

%A join of a graph $G$ is a set $J$ of edges such that the degrees of every vertex have the same parity in $G$ and $G[J]$. If there is no harm of
%confusion we use $J$ instead of $G[J]$.

Let $G$ be a bridgeless cubic graph. Obviously, $\omega'(G)\leq \omega(G)$. Note that an even factor of a bridgeless cubic graph is a spanning subgraph of $G$ having all vertices of degree either 2 or 0, that is, a union of circuits and isolated vertices.

In several papers, over the last few decades, weak oddness and oddness of a cubic graph appear as interchangeable definitions, implicitly assuming that they should be equal for every bridgeless cubic graph. But, the long standing discussion whether $\omega(G) = \omega'(G)$ for all bridgeless cubic graphs $G$ was recently finished by the following negative result of Lukot'ka and Maz\'{a}k.

\begin{theorem} [\cite{LM_2016}] \label{weak_oddness_vs_oddness} There exist a graph $G$ with $r(G)=12$, $\omega'(G) = 14$, and $\omega(G) = 16$.
\end{theorem}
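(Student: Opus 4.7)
The plan is to exhibit an explicit bridgeless cubic graph $G$ that simultaneously realises the three values, rather than to try to derive the inequality abstractly. The natural strategy is to build $G$ by combining several copies of a small, well-understood ``block'' multipole $B$ whose local contributions to each of $r$, $\omega'$, and $\omega$ can be computed. The key conceptual point driving the separation $\omega'(G) < \omega(G)$ is that an even factor is allowed to leave vertices isolated (degree $0$), and a judiciously chosen block should admit an even factor in which turning two vertices of degree $2$ into degree $0$ eliminates two odd circuits without creating new ones, a move that is not available to a genuine $2$-factor.

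First, I would fix a candidate block $B$---say a Petersen-like or flower-snark-derived multipole with a small number of dangling semiedges---whose Tait-coloring behavior is fully described via its set $\Col(B)$ of semiedge colorings. For $B$ I would precompute (i) the minimum number of uncolored edges in any proper edge-coloring of $B$ compatible with a prescribed boundary coloring (its ``local resistance''), (ii) the minimum number of odd components inside $B$ over all local $2$-factors, and (iii) the analogous minimum when the internal factor is allowed to have isolated vertices. The block would be chosen precisely so that (iii) is strictly smaller than (ii) by exactly $1$, while (i) is strictly smaller than both. One can then join eight copies of $B$ (or whatever number calibrates the totals to $12,14,16$) via a small connecting scheme so that the three invariants add: the interfaces must be designed so that coloring/factoring constraints across them cannot produce savings beyond what each block already gives.

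The upper bounds $r(G) \le 12$, $\omega'(G) \le 14$, $\omega(G) \le 16$ would be established by exhibiting three explicit witnesses: a $4$-edge-coloring with a color class of size $12$, an even factor containing $14$ odd circuits and a few strategically placed isolated vertices, and a $2$-factor with $16$ odd circuits. The lower bounds are where the real work is. For $r(G) \ge 12$ one uses a Parity-Lemma argument (Lemma 2.1) block by block: any $3$-edge-coloring of $G$ induces a Boole-coloring on each interface edge-cut, and the parity constraints force at least a prescribed number of uncolored edges inside each block. The bounds on $\omega(G)$ and $\omega'(G)$ are proved by enumerating the possible ways a $2$-factor (resp.\ even factor) can traverse each interface and summing up the forced odd components; the difference $\omega(G) - \omega'(G) = 2$ comes from showing that precisely one block admits a local saving of one odd component by isolating a vertex, while another symmetric block admits the same saving, and that no further savings propagate.

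The main obstacle is the sharp lower bound $\omega(G) \ge 16$ \emph{together with} $\omega'(G) \le 14$: one must prove that no $2$-factor can mimic the even-factor trick, which requires ruling out global cancellations between blocks that a naive additive analysis would miss. This is typically handled by a finite case analysis over the possible restrictions of a $2$-factor to each interface edge-cut, made tractable by the symmetry of the block and by the Parity Lemma applied to the complementary matching. Carrying out this case analysis cleanly---while keeping the explicit constructions of the three witnesses consistent---is the real content of the proof, and is ultimately what Lukot'ka and Maz\'ak accomplish in~\cite{LM_2016}.
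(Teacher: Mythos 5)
There is a genuine gap: what you have written is a plan, not a proof. The block $B$ is never specified, the quantities (i)--(iii) are never computed for any concrete multipole, the ``connecting scheme'' that is supposed to make the invariants add is never described, and the target values $12$, $14$, $16$ are never derived from anything --- they are simply asserted as the totals your calibration should produce. Every step that carries actual content (the additivity of $r$, $\omega$ and $\omega'$ across the interfaces, the lower bounds, the exclusion of global cancellations between blocks) is deferred, and your final sentence concedes that the real work ``is ultimately what Lukot'ka and Maz\'ak accomplish.'' As it stands, nothing is proved.

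For comparison: the survey itself does not prove Theorem~\ref{weak_oddness_vs_oddness} either --- it is quoted from \cite{LM_2016} --- but it does prove the stronger separation $\omega'(K^*)=6$, $\omega(K^*)=8$ (Theorem~\ref{improvement_by_GM}), and the mechanism there is worth internalizing because it replaces your abstract ``judiciously chosen block'' with something completely explicit. The block is the Petersen graph $P$, glued onto an edge $xy$ of a base graph via a $2$-cut-connection, and the whole argument rests on two concrete facts: every $2$-factor of $P$ consists of two $5$-circuits, whereas $P$ has an even factor consisting of a $9$-circuit through any prescribed edge plus one isolated vertex. Lemma~\ref{oddvsweakodd} then shows that if $xy$ lies on an odd circuit of some minimal even factor of $G$ but on no minimal $2$-factor of $G$, the gluing raises $\omega$ by at least $2$ while leaving $\omega'$ unchanged --- the $9$-circuit merges with the odd circuit through the clone edges to form an even circuit, a move no $2$-factor of $P$ can imitate. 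The remaining work is to build a base graph $K$ (itself assembled from Petersen copies) for which such an edge $e_1$ provably exists, which is done by a short case analysis on how a $2$-factor meets a maximal matching of $P$ (Lemma~\ref{petersen_prop}). If you want to salvage your write-up, you should either reproduce a construction at this level of explicitness or honestly present the statement as quoted from \cite{LM_2016} without a proof.
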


Theorem \ref{weak_oddness_vs_oddness} gives rise to infinite families of cubic graphs where oddness and weak oddness differ. Moreover, it is observed in \cite{LM_2016} that there exist cubic graphs having arbitrarily large difference between oddness and weak oddness.

Here, we improve Theorem \ref{weak_oddness_vs_oddness}, showing an example of a cubic graph having $\omega'(G)=6$ and $\omega(G)=8$.

In order to give a general approach to the problem we introduce the following definitions:

A {\it minimal} $2$-factor (even factor) of a bridgeless cubic graph $G$ is a $2$-factor (even factor) of $G$ with the minimum number of odd circuits (components).
In other words, a minimal $2$-factor has $\omega(G)$ odd circuits and a minimal even factor has $\omega'(G)$ odd components.

Here we follow the terminology introduced by Espert and Mazzuoccolo in \cite{EM_15} for a standard operation on cubic graphs: given two cubic graphs $G$ and $H$ and two edges $xy$ in G and $uv$ in H, the \emph{glueing}, or 2-cut-connection, of $(G,xy)$ and $(H,uv)$ is the graph obtained from $G$ and $H$ by removing the edges $xy$ and $uv$, and adding the new edges $xu$ and $yv$.
In the resulting graph, we call these two new edges the \emph{clone edges} of $xy$ (or $uv$). Note that if $G$ and $H$ are cubic and bridgeless, then the resulting graph is also cubic and bridgeless.

In what follows $H$ will always be the Petersen graph $P$, which is arc-transitive. Hence, the choice of $uv$ and the order of each pair $(x, y)$ and $(u, v)$ are not relevant, so that we will simply say that we glue $P$ on the edge $xy$ of $G$.

\begin{lemma}\label{oddvsweakodd}
Let $(G,xy)$ be a pair where $G$ is a bridgeless cubic graph and $xy$ is an edge of $G$.
If $xy$ belongs to an odd circuit of a minimal even factor of $G$, {\boldmath $and$} $xy$ does not belong to a minimal $2$-factor of $G$, then the graph $G^*$ obtained by glueing a copy of the Petersen graph  on the edge $xy$ of $G$ has the following properties:
\begin{itemize}
 \item[$(i)$]
 $\omega(G^*)\ge \omega(G)+2$.
 \item[$(ii)$]
 $\omega'(G^*)=\omega'(G)$.
\end{itemize}
\end{lemma}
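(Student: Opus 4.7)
The plan is to exploit the fact that $\{xu,yv\}$ is a $2$-edge-cut in $G^*$ separating $V(G)$ from $V(P)$, so that every $2$-factor or even factor of $G^*$ contains an even number of these cut edges (by the standard parity argument on degree sums in $V(P)$). The analysis therefore splits into the case where \emph{both} clone edges lie in the factor and the case where \emph{neither} does. Two standard properties of the Petersen graph $P$ will be used throughout: every $2$-factor of $P$ is a pair of disjoint $5$-cycles (since $P$ has girth $5$ and is non-Hamiltonian), and every even factor of $P$ has at least $2$ odd components, where an isolated vertex counts as an odd component of order $1$. The latter follows from a short case analysis in which the possible circuit lengths in $P$ are $5,6,8,9$, and the minimum $2$ is achieved by a $2$-factor, a single $8$-circuit, or a single $9$-circuit.

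For part $(i)$, let $F^*$ be an arbitrary $2$-factor of $G^*$. If neither clone edge lies in $F^*$, then $F^* \cap E(G)$ is a $2$-factor $F$ of $G$ avoiding $xy$ and $F^* \cap E(P)$ is a $2$-factor of $P$ consisting of two odd $5$-cycles, so $\omega(F^*) = \omega(F) + 2 \geq \omega(G) + 2$. If both clones lie in $F^*$, reattaching $xy$ on the $G$-side yields a $2$-factor $F$ of $G$ containing $xy$, and reattaching $uv$ on the $P$-side yields a $2$-factor $F_P$ of $P$ (again two $5$-cycles). Inside $F^*$ the circuit $C$ of $F$ through $xy$ merges with the $5$-cycle $C_P$ of $F_P$ through $uv$ into a single circuit of length $|C|+5$, while the other $5$-cycle $C_P'$ survives; a short parity distinction on $|C|$ gives $\omega(F^*) \geq \omega(F)$. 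Since $xy \in F$ and no minimal $2$-factor of $G$ contains $xy$, we have $\omega(F) > \omega(G)$; combined with the parity constraint that all $2$-factors of $G$ have odd-circuit counts of the same parity, this forces $\omega(F) \geq \omega(G) + 2$.

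For the upper bound in $(ii)$, I would construct an explicit even factor of $G^*$ attaining $\omega'(G)$ odd components. Take a minimal even factor $F$ of $G$ whose odd circuit $C$ contains $xy$, together with any $2$-factor of $P$ containing $uv$, consisting of a $5$-cycle $C_P$ through $uv$ and a second $5$-cycle $C_P'$, and set
\[
F^* \;=\; (F \setminus \{xy\}) \,\cup\, \{xu,yv\} \,\cup\, (C_P \setminus \{uv\}) \,\cup\, C_P'.
\]
Every vertex of $G^*$ has degree $0$ or $2$ in $F^*$, so $F^*$ is an even factor. Its components are the unchanged components of $F$ other than $C$; the merged circuit of length $|C|+5$, which is \emph{even} because $|C|$ is odd; and the odd $5$-cycle $C_P'$. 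Hence $\omega'(G^*) \leq \omega'(F^*) = (\omega'(G)-1)+1 = \omega'(G)$.

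The matching lower bound $\omega'(G^*) \geq \omega'(G)$ repeats the two-case cut argument on an arbitrary even factor $F^*$ of $G^*$, producing even factors $F$ of $G$ and $F_P$ of $P$ with odd-component counts $o_F \geq \omega'(G)$ and $o_P \geq \omega'(P) = 2$. The only sub-case in which oddness might drop below $\omega'(G)$ is when both clones are in $F^*$ \emph{and} both circuits through $xy$ (in $F$) and through $uv$ (in $F_P$) are odd: the two merge into a single even circuit, losing $2$ odd components, but the lower bound $o_P \geq 2$ exactly compensates, giving $\omega'(F^*) \geq o_F + o_P - 2 \geq \omega'(G)$. In every other sub-case the merging (if any) preserves or increases the odd count, so $\omega'(F^*) \geq o_F + o_P \geq \omega'(G)$. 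The main technical difficulty I anticipate is this parity bookkeeping together with the verification of $\omega'(P) = 2$ via the girth and non-Hamiltonicity of $P$; with these in hand, the two halves of the lemma follow from the $2$-cut decomposition.
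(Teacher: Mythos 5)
Your proof is correct and follows essentially the same route as the paper: decompose along the $2$-edge-cut formed by the clone edges, use the fact that every $2$-factor of the Petersen graph is two $5$-circuits for $(i)$, and build an explicit even factor merging the odd circuit through $xy$ with an odd circuit of $P$ for $(ii)$. The only differences are cosmetic and in your favour: you use a $2$-factor of $P$ (two $5$-cycles) where the paper uses the $9$-circuit plus an isolated vertex, and you spell out the lower bound $\omega'(G^*)\ge\omega'(G)$, which the paper leaves implicit.
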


\begin{proof}
Denote by $uv$ the edge of $P$ used to perform the glueing (as already observed, the choice of $uv$ is not relevant since $P$ is arc-transitive).
Denote by $F'$ a minimal even factor of $G$ having an odd circuit $C'$ which contains the edge $xy$.
Select one of the minimal even factor of $P$ which consists of a circuit of length $9$ passing through $uv$ and an isolated vertex.
Consider the even factor of $G^*$ obtained glueing together $F'$ and the selected minimal even factor of $P$. The glueing of $C'$  and the $9$-circuit of $P$ (see Fig. \ref{fig:petersen_factors}) produces an even circuit of $G^*$, then this even factor of $G^*$ has the same number of odd components of $F'$, that is $\omega'(G^*)=\omega'(G)$.\\
Moreover, every $2$-factor of $P$ consists of two $5$-circuits. Hence, if every minimal $2$-factor of $G$ does not contain the edge $xy$, then the graph $G^*$ has at least two new odd components respect to $G$ in every of its $2$-factors, that is $\omega(G^*)\ge \omega(G)+2$ as claimed.
\end{proof}

Now, our aim is to costruct a pair $(G,xy)$ which satisfies the assumptions of previous lemma. Moreover, we would like to find $G$ such that their oddness and weak oddness be as small as possible. In particular, we will produce an example with oddness $6$ and we leave as an open problem the existence of an example with oddness $4$.

In order to construct such an example, we will implicitly use several times the two following properties of the graph $G^*$ described in Lemma \ref{oddvsweakodd}:
\begin{itemize}
\item[{\bf P1.}] If a circuit $C$ of a $2$-factor (even factor) $F$ of $G^*$ passes through the two clone edges of $xy$, then $F$ has exactly (at least) one odd component distinct from $C$ in $P$.
\item[{\bf P2.}] If a $2$-factor (even factor) $F$ does not contain a circuit $C$ passing through the clone edges of $xy$, then $F$ has exactly (at least) two odd components in $P$.
\end{itemize}

Finally, we recall in the next lemma some well-known properties of $2$-factors and circuits of the Petersen graph which are useful in the proof of our main result (see also Figure \ref{fig:petersen_factors}).

\begin{lemma}\label{petersen_prop}
Let $M=\{e_1,e_2,e_3\}$ be a set of three edges of the Petersen graph $P$, which induce a maximal matching of $P$. Then,
\begin{enumerate}
\item[$(i)$]
 Every $2$-factor $F$ of $P$ consists of two $5$-circuits $C_1$, $C_2$. Moreover, $|F \cap M|=2$, $|C_1 \cap M|=1$ and $|C_2 \cap M|=1$. \label{5cyclesPetersen}
\item[$(ii)$]
There exists a minimal even factor $F'$ of $P$ such that $M \subset F'$.  \label{9cyclePetersen}
\end{enumerate}
\end{lemma}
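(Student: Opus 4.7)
For part $(i)$, I first note that since $P$ has girth $5$ and $10$ vertices, every $2$-factor $F$ of $P$ is a disjoint union of circuits of length at least $5$ whose total length is $10$; because $P$ is famously non-Hamiltonian, the only possibility is $F = C_{1}\cup C_{2}$ with $|C_{1}|=|C_{2}|=5$. Since the girth is $5$, neither $C_{1}$ nor $C_{2}$ admits a chord, so a degree count forces the remaining five edges of $P$ to form a perfect matching between $V(C_{1})$ and $V(C_{2})$. Writing $a = |M\cap E(C_{1})|$, $b = |M\cap E(C_{2})|$ and $c$ for the number of edges of $M$ running between $V(C_{1})$ and $V(C_{2})$, we have $a+b+c = 3$, and the number of vertices of $V(C_{i})$ unsaturated by $M$ equals $5-2a-c$ for $i=1$ and $5-2b-c$ for $i=2$.

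Since $M$ is maximal, the four $M$-unsaturated vertices form an independent set; hence no three of them can lie in a single $C_{i}$ (any three vertices of a $5$-circuit span at least one edge). A brief case analysis on $(a,b,c)$ rules out every option but $(1,1,1)$: the cases $c\in\{0,2\}$ put at least three $M$-unsaturated vertices in one of the $V(C_{i})$; the case $c=3$ forces the two cross-edges outside $M$ to connect the unsaturated vertices of $V(C_{1})$ to those of $V(C_{2})$, again violating independence; and the sub-case $(a,b,c)=(2,0,1)$ (or its mirror $(0,2,1)$) leaves four $M$-unsaturated vertices on one side. Hence $(a,b,c)=(1,1,1)$, which is precisely the claim of $(i)$.

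For part $(ii)$, I first observe that $\omega'(P) = 2$: since $P$ is non-Hamiltonian and has no circuits of length $3$, $4$, or $7$, any even factor of $P$ consists of circuits of lengths in $\{5,6,8,9\}$ together with isolated vertices, and a direct check of the decompositions of $10$ shows that no such even factor has fewer than two odd components, while any $9$-circuit of $P$ plus the remaining vertex realises this value. Next, viewing $P$ as the Kneser graph $K(5,2)$, the four $M$-unsaturated vertices form a maximum independent set of $P$; since $\Aut(P)\cong S_{5}$ acts transitively on the five maximum independent sets, all maximal matchings of size $3$ lie in a single $\Aut(P)$-orbit, and it suffices to verify the claim for one representative.

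Choose $U = \{\{1,k\} : 2 \le k \le 5\}$, so that $M = \{\{2,3\}{-}\{4,5\},\ \{2,4\}{-}\{3,5\},\ \{2,5\}{-}\{3,4\}\}$. A direct check shows that
$$
\{2,3\}-\{4,5\}-\{1,3\}-\{2,4\}-\{3,5\}-\{1,4\}-\{2,5\}-\{3,4\}-\{1,5\}-\{2,3\}
$$
is a $9$-circuit $C$ of $P$ that contains all three edges of $M$ and omits $\{1,2\}$. Therefore $F' = C \cup \{\{1,2\}\}$, viewed as a spanning subgraph with $\{1,2\}$ isolated, is a minimal even factor with $M\subset F'$. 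The main obstacle, in my view, is orchestrating the case analysis in $(i)$, especially the case $c=3$, where the structure of the cross-matching must be invoked to contradict independence; once the $\Aut(P)$-transitivity argument is in place in $(ii)$, the remainder reduces to the explicit $9$-circuit displayed above.
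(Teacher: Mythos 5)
Your proof is correct. Note, however, that the paper itself offers no proof of this lemma at all: it is stated as a collection of ``well-known properties'' of the Petersen graph, with only a pointer to Figure~\ref{fig:petersen_factors} exhibiting one $2$-factor and one even factor. So your write-up supplies an argument where the authors rely on the reader's familiarity with $P$. Your reasoning checks out in detail: for $(i)$, the reduction to two chordless $5$-circuits joined by a cross perfect matching is standard, and the parameter count $(a,b,c)$ combined with the facts that the four $M$-unsaturated vertices are independent and that $\alpha(C_5)=2$ does eliminate every case except $(1,1,1)$ (including the slightly delicate $c=3$ case, where the two non-$M$ cross edges are forced to join unsaturated vertices). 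For $(ii)$, the computation $\omega'(P)=2$ from the admissible circuit lengths $\{5,6,8,9\}$, the identification of the unsaturated set with a star of the Kneser model $K(5,2)$ (which shows the maximal $3$-matching is unique given its star, so $\Aut(P)\cong S_5$ acts transitively on such matchings), and the explicit $9$-circuit through all of $M$ avoiding $\{1,2\}$ are all verifiable and correct; the resulting even factor has exactly two odd components, hence is minimal. The only stylistic remark is that the transitivity argument in $(ii)$ is not strictly needed once you observe that the unsaturated set must be a star and relabel coordinates, but it does make the reduction to a single representative explicit. In short: a complete and correct proof of a statement the paper leaves to the reader.
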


\begin{figure}
\centering
\includegraphics[width=8cm]{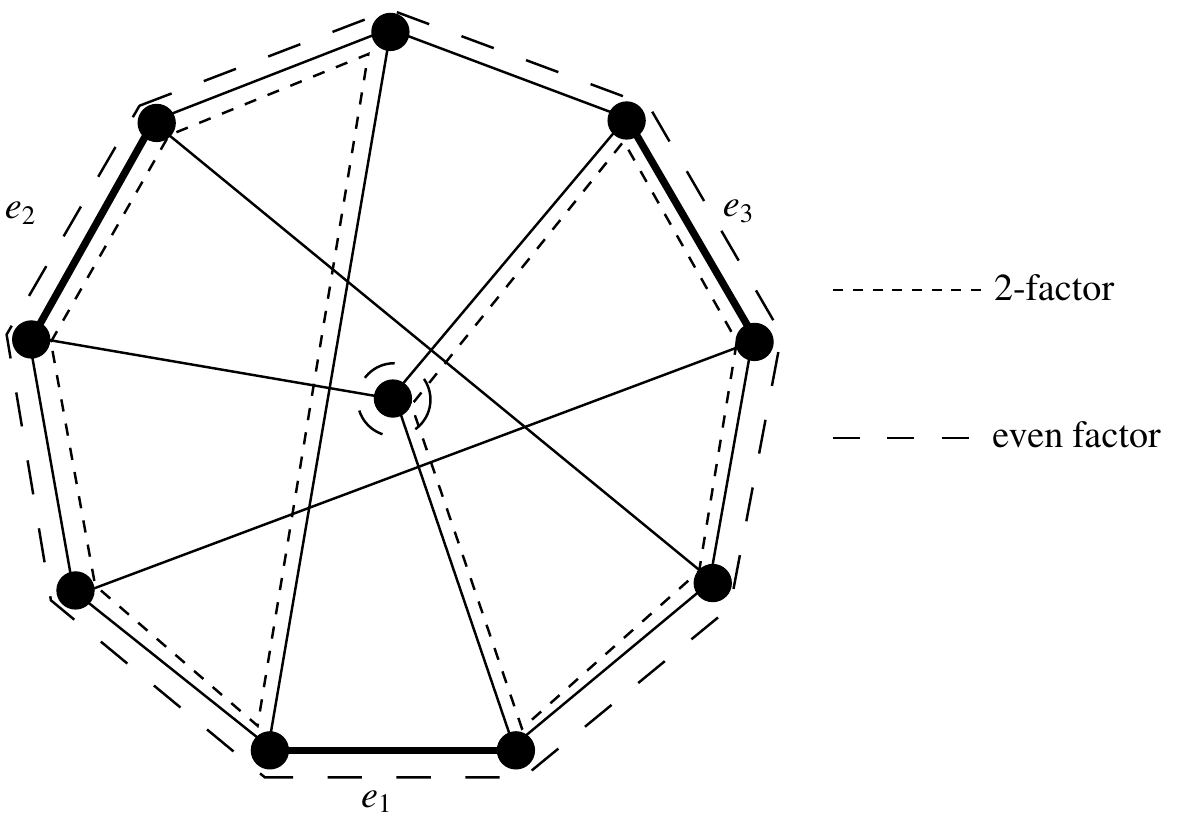}
\caption{A 2-factor $F$ and an even factor $F'$ of the Petersen graph which satisfy the conditions in Lemma \ref{petersen_prop}.}
\label{fig:petersen_factors}
\end{figure}

We denote by $K$ the graph obtained starting from the Petersen graph $P$ and glueing two further copies of the Petersen graph on each of two, say $e_2,e_3$, of the three edges of a maximal matching $M=\{e_1,e_2,e_3\}$ (see Fig.\ref{fig:K*}).
 %(This has to be understood as follows: given two copies $P_1$ and $P_2$ of the Petersen graph, \emph{glueing $P_1,P_2$ on the edge $e$ of $G$} means glueing $P_2$ on some clone edge of $e$ in the glueing of $P_1$ on the edge $e$ of $G$.)

\begin{remark}
 Every even factor and every $2$-factor either contains both edges or no edge of a given $2$-edge-cut. Since clone edges form a $2$-edge-cut, we can naturally reconstruct from each factor $F$ of $K$ the {\em underlying factor of $F$} in $P$.
 \end{remark}

\begin{figure}[h]
\centering
\vskip .4cm
\includegraphics[width=6cm]{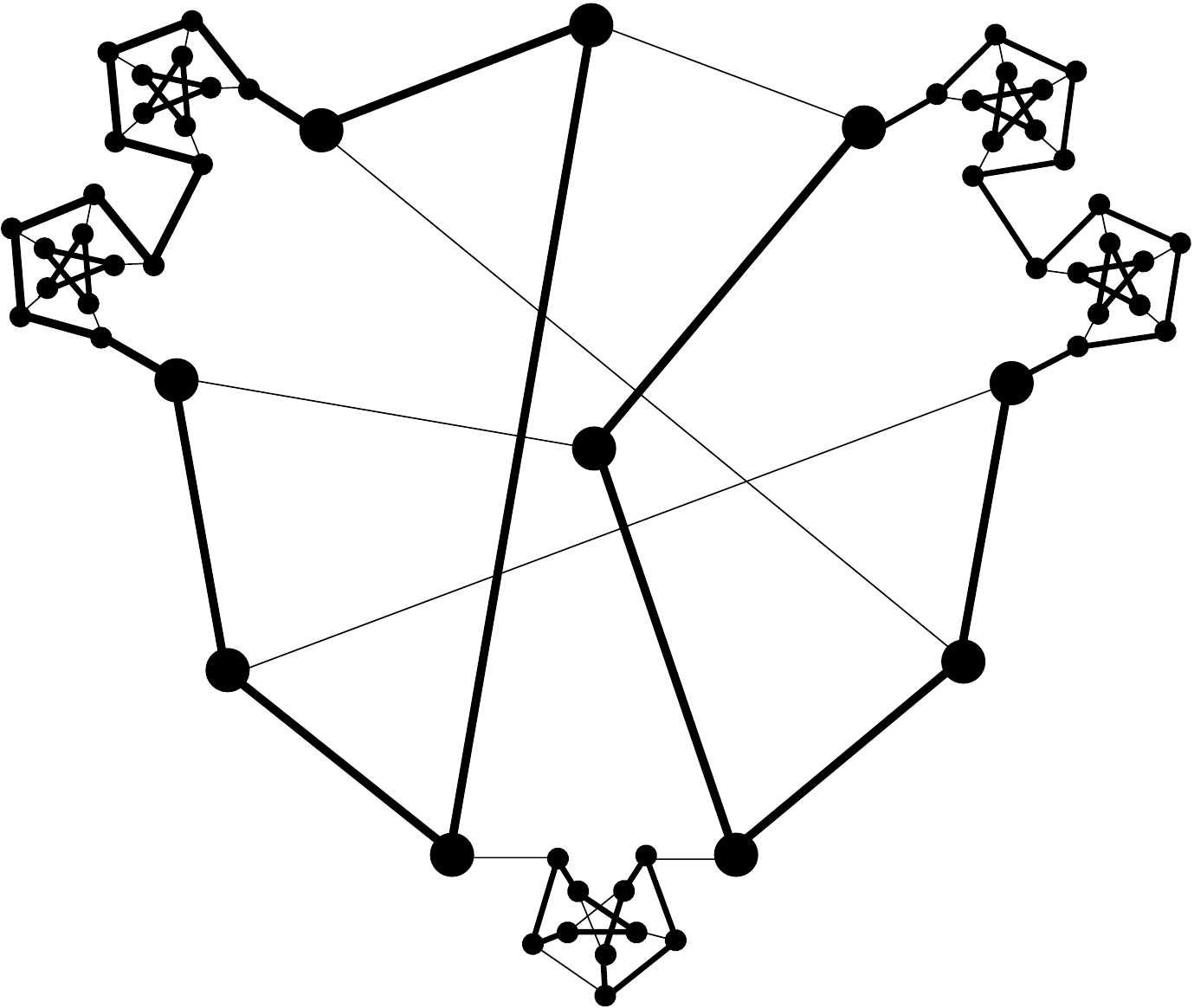}
\caption{The graph $K^*$ such that $\omega(K^*)=8$ and $\omega'(K^*)=6$. }
\label{fig:K*}
\end{figure}

Now, we are in position to prove our main result.
\begin{theorem} \label{improvement_by_GM}
Let $K^*$ be the graph obtained by glueing a copy of the Petersen graph on the edge $e_1$ of $K$ (see Fig. \ref{fig:K*}). Then, $\omega(K^*)=8$ and $\omega'(K^*)=6$.
\end{theorem}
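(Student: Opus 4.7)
The natural plan is to apply Lemma~\ref{oddvsweakodd} to the pair $(K,e_1)$. Once its two hypotheses are verified, the lemma delivers $\omega(K^*)\ge\omega(K)+2$ and the exact equality $\omega'(K^*)=\omega'(K)$, so the theorem reduces to computing $\omega(K)=\omega'(K)=6$ together with an explicit $2$-factor of $K^*$ realising the value $8$.

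To analyse factors of $K$ I would combine Lemma~\ref{petersen_prop} with the properties P1 and P2 attached to each of the two existing glueings on $e_2$ and $e_3$. For any $2$-factor $F$ of $K$, Lemma~\ref{petersen_prop}(i) forces the restriction of $F$ to the central Petersen to be a pair of $5$-circuits using exactly two edges of $M=\{e_1,e_2,e_3\}$, each $5$-circuit meeting $M$ in precisely one edge. According to which two matching edges are chosen by this central $2$-factor, property P1 applies at every attached copy whose clone pair is in $F$ (contributing exactly one standalone odd component in that copy) and P2 at every remaining attached copy (two standalone odd components). A case distinction on the active pair of matching edges, together with a careful tally of the unmerged central $5$-circuits and the attached contributions, should give $\omega(K)$ and at the same time show that every minimum $2$-factor of $K$ avoids $e_1$, verifying hypothesis~(2) of Lemma~\ref{oddvsweakodd}. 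A parallel case analysis for even factors, now also allowing the alternative minimal even factor of $P$ from Lemma~\ref{petersen_prop}(ii) (a $9$-circuit through a prescribed edge plus one isolated vertex) both in the central part and in each attached copy, yields $\omega'(K)$; exhibiting one such minimum even factor of $K$ whose central $9$-circuit passes through $e_1$ verifies hypothesis~(1).

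With both hypotheses in hand, Lemma~\ref{oddvsweakodd} gives the exact equality $\omega'(K^*)=\omega'(K)=6$ and the lower bound $\omega(K^*)\ge\omega(K)+2=8$. The matching upper bound $\omega(K^*)\le 8$ follows by explicit construction: pair a $2$-factor of the central Petersen using two of the matching edges with a $2$-factor of each of the three attached copies (containing or avoiding $uv$ according as its clone pair is used or not), and check, via the same P1/P2 book-keeping, that the resulting $2$-factor of $K^*$ has exactly $8$ odd circuits. The technical heart of the proof will be the combinatorial case analysis of the preceding step: the interplay between the Lemma~\ref{petersen_prop}(i) constraint (the central $2$-factor must select exactly two of the three matching edges) and the P1/P2 contributions coming from the three glued copies is what forces the claimed lower bounds, and the same book-keeping underpins the verification that the upper-bound constructions in $K^*$ cannot be improved.
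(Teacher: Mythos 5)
Your overall strategy coincides with the paper's: reduce Theorem~\ref{improvement_by_GM} to the two hypotheses of Lemma~\ref{oddvsweakodd} for the pair $(K,e_1)$; establish $\omega(K)=\omega'(K)=6$ by a case analysis on which edges of $M$ lie in the underlying central factor, using Lemma~\ref{petersen_prop} and properties P1/P2; note that every minimal $2$-factor of $K$ must use both $e_2$ and $e_3$ centrally and hence avoids $e_1$; exhibit a minimal even factor of $K$ whose odd big circuit contains $e_1$; and finish with an explicit $2$-factor of $K^*$ for the upper bound $\omega(K^*)\le 8$. This is exactly the proof given in the paper.

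There is, however, one concrete slip that would derail the arithmetic if the plan were carried out as written: you take $K^*$ to have \emph{three} attached Petersen copies, one per matching edge. In fact $K$ is built by glueing \emph{two} copies of the Petersen graph in series on each of $e_2$ and $e_3$, so $K$ has four attached copies and $K^*$ has five; this is what makes the counts in the paper come out (``one in each of the four copies and the two original circuits of $P$'' gives $4+2=6$, and the big odd circuit of the minimal even factor has length $9+4\cdot 5=29$). The doubling is not cosmetic. When a central $5$-circuit through $e_i$ is rerouted through the attached copies, each copy replaces one edge by a path on five new vertices, so with two copies in series the circuit grows to length $5+5+5=15$ and \emph{stays odd}, whereas with a single copy it would become a $10$-circuit and the two central odd components would vanish. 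With one copy per edge your tally would give $\omega(K)=2$ rather than $6$, and the target value $8$ for $K^*$ could not be reached (indeed it would contradict $\omega'=2\Rightarrow\omega=2$). Once the construction is read correctly, your case analysis is precisely the paper's and yields the stated values.
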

\begin{proof}
We only need to prove that $\omega(K)=\omega'(K)=6$ and $(K,e_1)$ satisfies the assumptions of Lemma \ref{oddvsweakodd}.
Consider a $2$-factor $F$ of $K$. The underlying factor of $F$ in $P$ has two $5$-circuits.
If only one between $e_2$ and $e_3$, say $e_2$, lies in the underlying factor of $F$, then $F$ consists of eight odd circuits: two in each copy of the Petersen graph on $e_3$, one in each copy of the Petersen graph on $e_2$ and the two original odd circuits of $P$ (which still correspond to odd circuits in $K$).
On the other hand, if both $e_2$ and $e_3$ are edges of the two $5$-circuits, then $F$ has only six odd circuits: one in each of the four copies of the Petersen graph and the two original circuits of $P$ (which again correspond to odd circuits in $K$). Then, $\omega(K)=6$. Moreover, every minimal $2$-factor of $G$ does not contain the edge $e_1$ since point \ref{5cyclesPetersen} of previous lemma.
Since every even factor has at least one odd component in each of the four copies of the Petersen graph, and at least two odd components arising from the odd components in $P$, then also $\omega'(K)=6$ holds. Furthermore, it follows from property $P2$ that there exists an even factor of $K^*$ having an isolated vertex, four $5$-circuits (one in each copy of the Petersen) and a circuit of length $29$ passing through $e_1$.
Hence, by  Lemma \ref{oddvsweakodd}, the graph $K^*$ has $\omega'=6$ and $\omega\ge 8$. Finally, it is routine to check that, in fact, $\omega=8$ (see Fig. \ref{fig:K*}).
\end{proof}

As already observed, $\omega'(G)=2$ implies $\omega(G)=2$, then the remaining open question is whether $\omega'(G) = 4$ if and only if $\omega(G) = 4$ (see Problem \ref{conjec-oddness-res}). The next two theorem study the relation of the $\omega'(G)$ and $r(G)$, when $r(G) \in \{3,4\}$.
%recall Eckhard's Conjecture: \omega(G) \leq 2r(G)-2

\begin{theorem} \label{r=3}
Let $G$ be a bridgeless cubic graph. If $r(G) = 3$, then $\omega'(G) = 4$.
\end{theorem}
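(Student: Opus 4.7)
Plan. I will prove $\omega'(G) = 4$ by establishing both $\omega'(G) \geq 4$ and $\omega'(G) \leq 4$ separately.

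For the lower bound, since $r(G) > 0$ the graph $G$ is class $2$, so Theorem 2.1(i) gives $\omega'(G) \geq 1$. A parity count on the components of any even factor — they are isolated vertices or circuits, and their sizes sum to $|V(G)|$, which is even in a cubic graph — forces $\omega'(G)$ to be even, so $\omega'(G) \geq 2$. It then suffices to exclude $\omega'(G) = 2$. Let $F$ be an even factor with exactly two odd components. If $F$ is a $2$-factor — two odd circuits $C_o^1, C_o^2$ plus even circuits — I colour the even circuits alternately with $\{1,2\}$, each $C_o^i$ alternately with $\{1,2\}$ except for one edge, and the complementary perfect matching $G\setminus F$ with colour $3$; this is a proper 4-edge-colouring with only two uncoloured edges, so $r(G) \leq 2$, contradicting $r(G) = 3$. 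If $F$ contains one or two isolated vertices (whose three neighbours necessarily lie on circuits of $F$), I use the same idea together with Kempe-chain recolouring around each isolated vertex — switching the two colours used on its three cycle-neighbours so that its three incident edges receive three distinct colours — to again obtain a proper 4-edge-colouring with at most two uncoloured edges, once more contradicting $r(G) = 3$.

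For the upper bound, fix a proper 4-edge-colouring achieving $r(G) = 3$ with colour classes $M_1, M_2, M_3, M_4 = \{e_1, e_2, e_3\}$, and let $V_4$ denote the six distinct endpoints of $M_4$. By the Parity Lemma (Lemma 2.3) applied to the 3-edge-coloured subcubic graph $G\setminus M_4$, together with the minimality of $|M_4|$ — which forces the two endpoints of every $e_\ell$ to miss \emph{distinct} colours, since otherwise $e_\ell$ could be absorbed into the 3-colouring and $r(G) \leq 2$ — a short combinatorial check pins the missing-colour distribution over $V_4$ to $(n_1,n_2,n_3) = (2,2,2)$. After relabelling, the endpoints of $e_1, e_2, e_3$ miss the colour-pairs $\{1,2\}$, $\{1,3\}$, $\{2,3\}$, respectively. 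For each $\ell$, the unique Kempe chain in the pair missed by $e_\ell$'s endpoints is a path $P_\ell$ in $M_{a_\ell}\cup M_{b_\ell}$ joining those endpoints, and $C_\ell := P_\ell \cup \{e_\ell\}$ is a cycle of $G$. I construct an even factor $F^{\star}$ as the symmetric difference $C_1 \triangle C_2 \triangle C_3$, treating vertices outside $V(C_1\cup C_2\cup C_3)$ as isolated in $F^{\star}$; a direct degree check at every vertex (using that any two of the $C_\ell$ share edges of at most one colour) shows that $F^{\star}$ is an even factor of $G$ in which all six vertices of $V_4$ have degree $2$.

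The hard step is bounding the number of odd components of $F^{\star}$ by $4$. The plan is: (i) the vertices outside $C_1\cup C_2\cup C_3$ appear either as isolated vertices or can be adjoined as additional even circuits taken from the 2-colour unions $M_i\cup M_j$ disjoint from the $C_\ell$, without upsetting the even-factor property; (ii) the circuits of $F^{\star}$ produced by the symmetric difference of three Kempe cycles have a controlled parity, constrained by the pairing pattern of Kempe endpoints forced by the minimality of $r(G)$. A case analysis on the overlap pattern of $C_1, C_2, C_3$ — generic (vertex-disjoint except at $V_4$) versus degenerate (sharing longer subpaths) — then shows that the odd-component count of $F^{\star}$ never exceeds $4$. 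The main obstacle is controlling this count in the degenerate overlap cases, where the strong structural rigidity imposed on the Kempe pairings by minimality of $r(G)$ must be fully exploited.
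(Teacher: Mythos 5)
The substance of this theorem is the upper bound $\omega'(G)\le 4$, and that is exactly the part your proposal does not prove: the ``hard step'' is announced as a plan (``a case analysis \dots then shows''), and the decisive difficulty is named but not resolved. Worse, the construction as described cannot work. By the minimality of the colour class $\{e_1,e_2,e_3\}$, the three Kempe chains $P_{(2,3)}(x_1,y_1)$, $P_{(1,3)}(x_2,y_2)$, $P_{(1,2)}(x_3,y_3)$ are pairwise disjoint (this is Lemma 2.4 of \cite{s98}, which the paper invokes), so your ``degenerate overlap cases'' do not occur and $F^{\star}=C_1\triangle C_2\triangle C_3$ is simply the disjoint union of three odd circuits. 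This is not a spanning even factor: every vertex of $G$ outside $C_1\cup C_2\cup C_3$ becomes an isolated, hence odd, component, and nothing bounds their number. Your step (i) does not repair this: the even circuits of $M_i\cup M_j$ for the three different colour pairs are not disjoint from the $C_\ell$ or from one another (e.g.\ a $\{1,2\}$-circuit can share $M_2$-edges with $C_1\subseteq M_2\cup M_3\cup\{e_1\}$), so adjoining them destroys the even-factor property, and no selection of them is shown to cover the remaining vertices. The paper avoids this entirely by a different device: it takes Kempe chains between endpoints of \emph{different} uncoloured edges (e.g.\ $P_{(1,3)}(x_1,y_3)$ and $P_{(2,3)}(x_2,x_3)$), and after a case analysis on whether they meet, recolours so that $\phi^{-1}(3)$ becomes a perfect matching (or all of colour $3$ concentrates at one vertex $z$); then $G-\phi^{-1}(3)$ is automatically a \emph{spanning} even factor whose odd components are bounded by the number of colour-$0$ edges, which is at most $4$. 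Some such recolouring step, which you do not have, is what makes the factor spanning with a controlled odd-component count.

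Two smaller points. First, your assertion that the Kempe chain in the missed colour pair starting at $x_\ell$ ends at $y_\ell$ needs an argument (it does follow from minimality, since otherwise a Kempe swap would let both ends of $e_\ell$ miss the same colour); as stated it is a gap. Second, your lower bound is correct in substance ($\omega'$ is even and $\omega'(G)\le 2$ would force $r(G)\le 2$; cf.\ $r\le\omega'$ in \cite{LM_2016}), but your treatment of isolated vertices is not: at an isolated vertex of the even factor all three edges lie outside the factor and at most one of them can receive colour $3$, so your recolouring would place two colour-$0$ edges at one vertex, which is not a proper $4$-edge-colouring. This part is repairable by citing the known inequality, but the upper bound is a genuine missing proof.
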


\begin{proof} Let $c$ be a minimal proper 4-edge-coloring of $G$ with $|c^{-1}(0)| = r(G)$.

Let $c^{-1}(0) = \{e_1, e_2, e_3\}$ and $e_i \in A_i$. Let $e_i = x_iy_i$. There is a $(2,3)$-chain $P_{(2,3)}(x_1,y_1)$
from $x_1$ to $y_1$ which starts with an edge of color 2 and ends with an edge of color 3.
The chains $P_{(1,3)}(x_2,y_2)$ and $P_{(1,2)}(x_3,y_3)$ are defined analogously. By Lemma 2.4 of \cite{s98} the three paths
$P_{(2,3)}(x_1,y_1)$, $P_{(1,3)}(x_2,y_2)$ and $P_{(1,2)}(x_3,y_3)$ are pairwise disjoint.

Furthermore, there are chains $P_{(1,3)}(x_1,y_3)$ and $P_{(2,3)}(x_2,x_3)$.

$(a)$ $P_{(1,3)}(x_1,y_3)$ and $P_{(2,3)}(x_2,x_3)$ are disjoint.

Interchange the colors on the chains; i.e., consider $P_{(3,1)}(x_1,y_3)$ and $P_{(3,2)}(x_2,x_3)$, to obtain a proper 4-edge-coloring $c'$ of
$G$. Then $e_3$ can be colored with color 3, and we still have a proper coloring. Hence $r(G) < 3$, a contradiction.

$(b)$ $P_{(1,3)}(x_1,y_3)$ and $P_{(2,3)}(x_2,x_3)$ are not disjoint.

Let $z$ be the first vertex of $P_{(1,3)}(x_1,y_3)$
which is incident to an edge of $P_{(2,3)}(x_2,x_3)$.
Then $P_{(1,3)}(x_1,z)$ is of odd length, i.e.,~its last edge is colored with color 1.

Let $x \in \{x_2,x_3\}$ and let $P_{(2,3)}(z,x)$ be the subpath of $P_{(2,3)}(x_2,x_3)$ which starts at $z$ with an edge of color 2.

$(b1)$ $x = x_2$. Replace $P_{(2,3)}(x_2,z)$ by $P_{(3,2)}(x_2,z)$,
and note that the coloring of the edges of $P_{(1,3)}(x_1,z)$ is unchanged. Replace  $P_{(1,3)}(x_1,z)$ by $P_{(3,1)}(x_1,z)$, to obtain a 4-edge-coloring
$\phi$ where all edges incident to $z$ are colored with color 3, for all $v \in V(G) - \{z\}$, the three edges incident to $v$ receive pairwise different colors.
Furthermore, $\phi^{-1}(0) = \{e_1, e_2, e_3\} = A_3$. Now, $G - \phi^{-1}(3)$ has (at most) 4 odd components.
Hence $\omega'(G) = 4$.

$(b2)$ $x = x_3$. Let $z'$ be the neighbor of $z$ in $P_{(1,3)}(x_1,y_3)$ such that $c(zz') = 3$, i.e.,~$zz' \in E(P_{(1,3)}(x_1,y_3)) \cap E(P_{(2,3)}(x_2,x_3))$.
Let
$P_{(2,3)}(x_2,z')$ be the subpath of $P_{(2,3)}(x_2,x_3)$. Obtain a proper 4-edge-coloring $\phi$ of $G$ by replacing
$P_{(2,3)}(x_2,z')$ by $P_{(3,2)}(x_2,z')$, and
$P_{(1,3)}(x_1,z)$ by $P_{(3,1)}(x_1,z)$, and color edge $zz'$ with color 0. Then $\phi^{-1}(0) = \{e_1,e_2,e_3, zz'\} = A_3$.
Now, $\phi^{-1}(3)$ is a 1-factor of $G$ and $G - \phi^{-1}(3)$ has (at most) 4 odd components.
Hence $\omega'(G) = 4$.
\end{proof}

\begin{theorem}
There is a bridgeless cubic graph $G$ with $r(G)=4$ and $\omega'(G)=\omega(G)=6$.
\end{theorem}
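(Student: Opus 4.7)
The plan is to take $G=K$, the 30-vertex cubic bridgeless graph defined just before Theorem \ref{improvement_by_GM}; the proof of that theorem already shows $\omega(K)=\omega'(K)=6$, so only $r(K)=4$ needs to be established. For the upper bound $r(K)\le 4$, the plan is to build an explicit proper 4-edge-coloring of $K$ with exactly four color-$0$ edges. The key observation is that the two color-$0$ edges required on the central Petersen copy $P$ (since $r(P)=2$) can be placed precisely on the edges $e_2,e_3$ themselves, which disappear under the gluing; the arc-transitivity of $P$ makes this placement possible. Restricting such an optimal 4-coloring to the core $P-\{e_2,e_3\}$ yields a proper 3-edge-coloring that contributes no color-$0$ edge to $K$ and forces the colors of the four clone edges. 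On each glued Petersen copy $P_j$ ($j=2,3$) one then chooses another optimal 4-coloring (with two color-$0$ edges) in which the removed edge $u_jv_j$ carries the color matching the prescribed clone-edge colors; the two color-$0$ edges of this local coloring lie inside $E(P_j)\setminus\{u_jv_j\}$ and survive into $K$. The total color-$0$ count is $0+2+2=4$.

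To prove the lower bound $r(K)\ge 4$, for any proper 4-edge-coloring $\phi$ of $K$ one partitions $E(K)=E_1\sqcup E_2\sqcup E_3\sqcup E_4\sqcup E_5$, where $E_1=E(P)\setminus\{e_2,e_3\}$ is the core interior, $E_2,E_3$ are the two pairs of clone edges replacing $e_2,e_3$, and $E_4,E_5$ are the interior edge sets of the two glued copies. Writing $b(F)=|\phi^{-1}(0)\cap F|$, the plan is to show that $b(E_j)+b(E_{j+2})\ge 2$ for each $j\in\{2,3\}$ by a short case analysis on the two semi-edge colors induced by $\phi$ on the 2-pole $P_j-u_jv_j$: if both semi-edges carry color $0$, then already $b(E_j)=2$; if they share a common nonzero color $c$, closing the 2-pole by the edge $u_jv_j$ of color $c$ produces a proper 4-edge-coloring of a full copy of $P_j$, and $r(P)=2$ forces $b(E_{j+2})\ge 2$; in the remaining mixed cases an appropriate closure combined with the Parity Lemma applied to the central 4-pole $P-\{e_2,e_3\}$ forces the required local contributions. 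Summing over $j=2,3$ and using the pairwise disjointness of $E_1,\dots,E_5$ gives $|\phi^{-1}(0)|\ge 4$, hence $r(K)\ge 4$.

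The main obstacle will be the lower-bound subcase in which the two semi-edges of a glued copy carry distinct nonzero colors: the natural closure is forced to assign color $0$ to the closing edge $u_jv_j$ itself, which lies outside $E(K)$, so the inequality $r(P)=2$ yields only one color-$0$ edge inside the piece rather than two. Closing this gap likely requires exploiting the Parity Lemma on the core 4-pole to show that if both glued pieces are simultaneously in this problematic configuration then the core cannot be 3-edge-colored compatibly with the forced semi-edge colors, forcing additional color-$0$ edges somewhere in $E_1\cup E_2\cup E_3$; carrying out this combinatorial step rigorously — most plausibly via an explicit inspection of the 3-edge-colorings of $P-\{e_2,e_3\}$ for the particular matching $\{e_2,e_3\}$ inherited from $P$ — is the most delicate part of the argument.
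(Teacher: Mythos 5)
Your plan founders on a misreading of the construction of $K$. The graph $K$ defined before Theorem~\ref{improvement_by_GM} is obtained by gluing \emph{two} copies of the Petersen graph in series on \emph{each} of $e_2$ and $e_3$: it has $50$ vertices and four glued copies, not $30$ vertices and two (this is explicit in the proof of that theorem, which counts ``one in each of the four copies''). The doubling is not cosmetic: it is exactly what makes $\omega(K)=\omega'(K)=6$. In your $30$-vertex graph, a $2$-factor of the central $P$ containing both $e_2$ and $e_3$ merges each $5$-circuit $C_i$ with a length-$4$ path through the attached copy via two clone edges, producing an \emph{even} circuit of length $10$; the resulting $2$-factor has only two odd components (one $5$-circuit inside each glued copy), so your graph has oddness and weak oddness $2$, and the conclusion you import from Theorem~\ref{improvement_by_GM} does not apply to it.

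The resistance computation fails as well: the ``main obstacle'' you flag is not a technical gap but a genuine counterexample to the inequality $b(E_j)+b(E_{j+2})\ge 2$. Take a minimal $4$-edge-coloring of the Petersen graph whose color class $0$ is $\{u_jv_j,f\}$ with $u_jv_j$ and $f$ in the two $5$-circuits of a $2$-factor; restricting to the $2$-pole $P_j-u_jv_j$ gives a proper coloring with exactly \emph{one} internal color-$0$ edge, whose two semiedges then necessarily carry distinct nonzero colors (equal colors would let one color $u_jv_j$ and contradict $r(P)=2$). Since the central $4$-pole $P-\{e_2,e_3\}$ admits a Tait coloring with distinct semiedge colors at the two ends of $e_2$ and of $e_3$ (again from a $2$-factor through both edges), both glued pieces can be put in this configuration simultaneously, and after permuting colors in each copy the clone edges can be colored consistently; your $30$-vertex graph therefore has $r=2$, not $4$. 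The idea of reusing $K$ is salvageable with the actual $50$-vertex graph: there $r(K)\ge 4$ is immediate because $K$ contains four pairwise disjoint copies of $P$ minus an edge, each of which is class~2, and a matching coloring with one uncolored edge per copy can be assembled from the local colorings just described. Note, finally, that the paper proves the theorem with an entirely different graph, built from six copies of $P^-$ (the Petersen graph minus a vertex) so as to contain four disjoint $3$-critical subgraphs.
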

\begin{proof}

Let $P^-$ be the Petersen graph minus a vertex and $x, y, z$ be the three divalent vertices. For $i \in \{1,2,3\}$ there is minimal proper 4-edge-coloring $c_i$ of $P^-$ such that color 1 is missing at $x, z$ and color $i$ is missing at $y$. Let 0 be the fourth color and the edges of color 0 be the minimal color class. For $i \in \{1,2,3\}$ let
$P_i$ be a copy of $P^-$ with divalent vertices $x_i, y_i, z_i$ and a minimal proper 4-edge-coloring $c_i$.
Let $v$ be a vertex and add edges $y_iv$ (for each $i \in \{1,2,3\}$), $x_1z_{2}$, $x_2z_{3}$, $x_3z_{1}$ to obtain a 3-edge-connected cubic graph $H$ with 4-edge-coloring with three edges colored with color 0.
Since $H$ contains three pairwise disjoint 3-critical graphs it follows that $r(H) = 3$. (It is easy to see that $\omega'(H)=\omega(H)=4$.) Let $H_1$ and $H_2$ be two copies of $H$. Remove an edge $e_i$ of color 0 from $H_i[V(P_3)]$. Let $e_i=v_iw_i$ and add edges $v_1v_2$ and $w_1w_2$ to $H_1[V(P_3)]-e_1$ and $H_2[V(P_3)]-e_2$, to obtain a bridgeless cubic graph $G$ and a 4-edge-coloring of $G$ with minimal color class of cardinality 4. Since $G$ contains 4 pairwise disjoint 3-critical graphs it follows that $r(G) = 4$. It is easy to see that $\omega(G), \omega'(G) \leq 6$. Suppose to the contrary that $\omega'(G) = 4$. Then $G[V(H_1)]$ or $G[V(H_2)]$ (say $G[V(H_1)]$) contains at most two odd components of an even factor of $G$. But this implies that $H_1$ has an even factor with at most three odd components, a contradiction. We similarly deduce that $\omega(G)=6$.
\end{proof}

\section{Nowhere-zero flows}
\label{sec:flows}

An orientation $D$ of a graph $G$ is an assignment of a direction to each edge, and for $v \in V(G)$, $E^-(v)$  is the set of edges of $E(v)$ with head $v$ and $E^+(v)$ is the set of edges with tail $v$. The oriented graph is denoted by $D(G)$. If there is no harm of confusion we write $D$ instead of $D(G)$.

Let $A$ be an Abelian group. An {\em $A$-flow} $(D,\phi)$ on $G$ is an orientation $D$ of $G$ together with a function
$\phi: E(G) \rightarrow A$ such that
$$
\sum_{e \in E^+(v)}\phi(e) = \sum_{e \in E^-(v)}\phi(e), \textrm{ for all } v \in V(G).
$$
The set $\{e : \phi(e) \not= 0\}$ is the {\it support of} $(D,\phi)$ and it is denoted by $supp(D,\phi)$.
Furthermore, $(D,\phi)$ is a {\it nowhere-zero} $A$-flow, if $supp(D,\phi) = E(G)$.
If $A = \mathbb{R}$, the real numbers, then we say that $(D,\phi)$ is an {\it $r$-flow} if
$1 \leq |\phi(e)| \leq r-1$ or $\phi(e)=0$ for each $e \in E(G)$, and
$\sum_{e \in E^+(v)}\phi(e) = \sum_{e \in E^-(v)}\phi(e), \textrm{ for all } v \in V(G)$. Furthermore,
if $(D,\phi)$ only uses elements of $\mathbb{Z}$, the integers, then $(D,\phi)$ is an integer flow and it is called a $k$-flow. The following theorem of Tutte relates integer flows and group flows to each other.

\begin{theorem} [\cite{t54}]
\label{Tutte_Equiv_Flows}
Let $A$ be an Abelian group. A graph has a nowhere-zero $A$-flow if and only if it has a nowhere-zero $|A|$-flow.
\end{theorem}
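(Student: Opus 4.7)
The plan is to split the argument into two essentially independent stages. The first stage shows that the existence of a nowhere-zero $A$-flow depends only on the order of $A$, reducing the theorem to the case $A = \mathbb{Z}_k$ with $k = |A|$. The second stage establishes the equivalence between nowhere-zero $\mathbb{Z}_k$-flows and nowhere-zero integer $k$-flows by a minimisation argument on the integer-flow lattice.

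For the first stage, fix any reference orientation $D$ of $G$. For an abelian group $A$, the set of all $A$-flows on $(G,D)$ without the nowhere-zero condition is the kernel of the boundary operator $\partial\colon A^{E(G)} \to A^{V(G)}$, which is a free $A$-module of rank $\beta(G) := |E(G)| - |V(G)| + c(G)$, where $c(G)$ denotes the number of connected components. The same formula counts $A$-flows on every spanning subgraph $G|_S$, $S \subseteq E(G)$. Möbius inversion over supports then gives
$$
F(G,A) \;=\; \sum_{S \subseteq E(G)} (-1)^{|E(G)|-|S|}\, |A|^{\beta(G|_S)},
$$
a polynomial in $|A|$ with coefficients depending only on $G$. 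Hence $G$ admits a nowhere-zero $A$-flow if and only if this polynomial is positive at $|A|$, and this depends only on the number $|A|$. So it suffices to treat $A = \mathbb{Z}_k$.

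For the second stage, one direction is immediate: reducing an integer $k$-flow modulo $k$ preserves the Kirchhoff conditions and stays nowhere zero, since values in $\{\pm 1,\dots,\pm(k-1)\}$ never become $0$ modulo $k$. For the converse, lift a nowhere-zero $\mathbb{Z}_k$-flow to $\bar\phi \in \{1,\dots,k-1\}^{E(G)}$. The integer defect $\partial\bar\phi(v)$ is a multiple of $k$ and sums to $0$ on each component. Let $\Psi$ be the set of all $\psi\colon E(G) \to \mathbb{Z}$ with $\psi \equiv \bar\phi \pmod k$ and $\partial \psi \equiv 0$; this is a non-empty coset of the integer flow lattice of $(G,D)$. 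Among all $\psi \in \Psi$, choose one minimising the total excess
$$
\Phi(\psi) \;=\; \sum_{e \in E(G)} \max\bigl(0,\;|\psi(e)|-(k-1)\bigr).
$$
The congruence forces $\psi(e)\not\equiv 0\pmod k$, so $|\psi(e)| \notin \{0,k,2k,\dots\}$. If $\Phi(\psi) > 0$, then some edge $e_0$ satisfies $|\psi(e_0)|\geq k+1$. Orient each edge in the direction of its nonzero $\psi$-value, turning $|\psi|$ into a nonnegative integer circulation, and take a directed cycle $C$ through $e_0$ in this auxiliary digraph. The signed indicator $\chi_C$ of $C$ (with respect to $D$) lies in the integer cycle space, so $\psi' = \psi - k\chi_C$ still belongs to $\Psi$. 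A case analysis on each edge of $C$ shows that its contribution to $\Phi$ does not increase, and strictly decreases at $e_0$, contradicting minimality. Hence $\Phi(\psi) = 0$, and $\psi$ is the required nowhere-zero integer $k$-flow.

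The main obstacle will be the case analysis in the cycle-rerouting step: one must verify that for every edge $e \in C$, whether $|\psi(e)| \geq k$ or $|\psi(e)| \leq k-1$ and regardless of the sign of $\psi(e)$, the subtraction $\psi - k\chi_C$ does not increase its contribution to $\Phi$, and that the residue-class constraint rules out $\psi'(e) = 0$. Everything else reduces either to elementary linear algebra over abelian groups (for the polynomial formula) or to routine modular arithmetic (for the easy direction).
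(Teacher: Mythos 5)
The paper does not prove this statement at all: it is quoted as a classical theorem of Tutte with a citation to \cite{t54}, so there is no in-paper argument to compare against. Your proof is correct and is, in essence, the standard two-stage argument. Stage 1 (group-independence via the flow polynomial) is fine: the kernel of the boundary map over $A$ has exactly $|A|^{\beta(G|_S)}$ elements for each spanning subgraph, fundamental cycles giving the bijection with $A^{E\setminus T}$, and inclusion--exclusion over supports yields a polynomial in $|A|$, so existence of a nowhere-zero $A$-flow depends only on $|A|$. Stage 2 is a legitimate variant of the usual lifting argument: where Tutte (and, e.g., Diestel) fix representatives in a bounded range and minimise the total vertex defect $\sum_v|\partial\psi(v)|$, you instead work inside the coset $\bar\phi+k\,(\text{flow lattice})$ (non-empty because $\partial\bar\phi/k$ is an integer vector summing to zero on each component, hence an integer boundary) and minimise the edge excess $\sum_e\max(0,|\psi(e)|-(k-1))$; your rerouting step is sound because the auxiliary orientation makes $|\psi|$ a strictly positive circulation, so a directed cycle through the offending edge exists, and subtracting $k\chi_C$ moves each value on $C$ by $k$ toward zero in the correct signed sense, never creating a multiple of $k$ because of the residue constraint. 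The only blemish is notational: in defining $\Psi$ you write $\partial\psi\equiv 0$ where you mean $\partial\psi=0$ over $\mathbb{Z}$ (your subsequent use of the flow lattice makes the intention clear, but as written it could be misread as a congruence modulo $k$, which would break the conclusion). Your approach trades the vertex-defect bookkeeping for cycle-space bookkeeping; both are equally elementary, and yours has the small advantage that every intermediate object is already a genuine integer circulation.
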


If we reverse the orientation of an edge $e$ and replace the flow value by $-\phi(e)$, then we
obtain another nowhere-zero $A$-flow on $G$. Hence, if there exist an orientation of the edges of $G$
such that $G$ has a nowhere-zero $A$-flow, then $G$ has a nowhere-zero $A$-flow for any orientation.
Thus, the question for which values $r$ ($k$) a graph has a nowhere-zero $r$-flow (nowhere-zero $k$-flow) is a question about graphs,
not directed graphs.
The {\em circular flow number} of $G$ is $\inf\{ r | G \mbox{ has a nowhere-zero $r$-flow} \}$, and it is denoted by $F_c(G)$.
It is known  that $F_c(G)$ is always a minimum and that it is a rational number (see Goddyn, Tarsi, Zhang, and Cun-Quan \cite{GTZ_1998}).
Furthermore, if $F_c(G)$ is an integer, say $k$, then $G$ has an integer nowhere-zero $k$-flow, see Thm.~1.1 in \cite{s01}.
Let $F(G)$ be the smallest number $k$ such that $G$ admits a nowhere-zero $k$-flow. Clearly, $F_c(G) \leq F(G)$.
Indeed, Tutte conjectured that every bridgeless graph has a nowhere-zero 5-flow (Conjecture \ref{5flow_conj}).

Seymour \cite{Seymour_81} proved that every bridgeless graph has a nowhere-zero 6-flow, see \cite{drs17} for alternative proofs.
Conjecture \ref{5flow_conj} is equivalent to its restriction on cubic graphs. The following result shows that it suffices to prove it for snarks.

\begin{theorem} [Tutte \cite{t49,t54}]
\label{Tutte_character}
\begin{itemize}
\item[$(i)$]
A cubic graph $G$ is bipartite if and only if $F_c(G) = 3$.
\item[$(ii)$]
A cubic graph $G$ is class 1 if and only if $F_c(G) \leq 4$.
\end{itemize}
\end{theorem}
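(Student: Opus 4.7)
For part $(i)$ I would first record the uniform lower bound $F_c(G)>2$ on every cubic $G$: any $r$-flow with $r\le 2$ takes values $\pm 1$ on every edge, but the three signed values at a vertex of degree $3$ cannot sum to $0$ by parity. Then I would argue both directions with $\mathbb{Z}_3$-flows, invoking Theorem~\ref{Tutte_Equiv_Flows} to pass between integer $3$-flows and $\mathbb{Z}_3$-flows. If $(A,B)$ is a bipartition of $G$, orient every edge from $A$ to $B$ and assign value $1\in\mathbb{Z}_3$; conservation holds at each vertex since $3\equiv 0\pmod 3$, producing a nowhere-zero $\mathbb{Z}_3$-flow, hence a nowhere-zero integer $3$-flow, so $F_c(G)\le 3$, and combined with the lower bound this forces $F_c(G)=3$. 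Conversely, starting from a nowhere-zero $\mathbb{Z}_3$-flow and reversing every edge whose value is $2=-1$, I may assume all edge-values equal $1$; local conservation at each cubic vertex then forces its three incident edges to be either all outgoing or all incoming, and the source/sink dichotomy supplies the desired bipartition.

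For part $(ii)$ the relevant group is the Klein four-group $\mathbb{Z}_2\times\mathbb{Z}_2$, whose three nonzero elements satisfy $a+b+c=0$ precisely when $a,b,c$ are pairwise distinct (this is the restriction to nonzero entries of Table~\ref{tableKlein}). Since every element is self-inverse, a nowhere-zero $\mathbb{Z}_2\times\mathbb{Z}_2$-flow is independent of the orientation, and conservation at each cubic vertex forces the three incident flow values to be pairwise distinct, yielding a proper $3$-edge-coloring after identifying $\1_1,\1_2,\1_3$ with the nonzero Klein elements. Conversely, a Tait coloring defines a nowhere-zero $\mathbb{Z}_2\times\mathbb{Z}_2$-flow under an arbitrary orientation. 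Theorem~\ref{Tutte_Equiv_Flows} then identifies nowhere-zero $\mathbb{Z}_2\times\mathbb{Z}_2$-flows with nowhere-zero integer $4$-flows, and these in turn coincide with the condition $F_c(G)\le 4$ via the attainment of the infimum defining $F_c$ recalled just before the theorem.

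The main obstacle is not the combinatorial translation in either part but the interface between the real-valued circular flow number and the integer-flow threshold: part $(i)$ demands the strict lower bound $F_c(G)>2$, and part $(ii)$ demands that the inequality $F_c(G)\le 4$ already produces an honest integer $4$-flow rather than merely an $r$-flow with $r\le 4$. Both are handled by the attainment and monotonicity properties of $F_c$ together with Theorem~\ref{Tutte_Equiv_Flows} applied respectively to $\mathbb{Z}_3$ and $\mathbb{Z}_2\times\mathbb{Z}_2$, reducing the problem in each case to the combinatorial step described above.
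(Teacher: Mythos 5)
The paper states this result as a classical theorem of Tutte and supplies no proof of its own, so there is nothing internal to compare against; your argument is the standard one via $\mathbb{Z}_3$- and $\mathbb{Z}_2\times\mathbb{Z}_2$-valued flows together with Theorem~\ref{Tutte_Equiv_Flows}, and part $(ii)$ as well as the converse direction of part $(i)$ are sound. Your handling of the real-versus-integer interface in part $(ii)$ can be made precise with facts the paper records: $F_c(G)\le 4$ combined with the absence of cubic graphs with circular flow number in $(3,4)$ forces $F_c(G)\in\{3,4\}$, and the remark that an integer circular flow number is attained by an integer nowhere-zero flow then yields the nowhere-zero integer $4$-flow your Klein-group translation requires.

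There is, however, one genuine gap, in the forward direction of part $(i)$. You establish only $F_c(G)>2$, by a parity argument that applies when $r=2$ exactly, and then conclude $F_c(G)=3$ from $F_c(G)\le 3$; but $2<F_c(G)\le 3$ does not exclude values in the open interval $(2,3)$, and the gap theorem quoted in the paper concerns $(3,4)$, not $(2,3)$. What you actually need is $F_c(G)\ge 3$ for every cubic graph, and parity does not give it, since an $r$-flow with $2<r<3$ need not take values $\pm 1$. The correct argument is quantitative: reverse orientations so that all flow values are positive; at a vertex of degree $3$, conservation forces two incident edges on one side of the vertex and one on the other, so the single edge carries the sum of the other two values and hence a value at least $1+1=2$, whence $r-1\ge 2$, i.e.\ $r\ge 3$. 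With this replacement the rest of your part $(i)$ goes through unchanged.
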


The following statement is a combination of results of the third author and Lukot'ka and \v{S}koviera.

\begin{theorem} [\cite{ls_11,s01_1}]
For every $s$ of the interval $(3,4)$, there is no cubic graph $G$ with $F_c(G) = s$, and for every $r \in \{3\} \cup [4,5]$, there is a cubic graph $H$ with $F_c(H)=r$.
\end{theorem}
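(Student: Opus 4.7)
My plan is to prove the two parts of the theorem independently. For the gap result (no $F_c(G)\in(3,4)$), Theorem~\ref{Tutte_character}$(ii)$ tells us that $F_c(G)<4$ forces $G$ to be class~$1$, and Theorem~\ref{Tutte_character}$(i)$ tells us that $F_c(G)=3$ is equivalent to bipartiteness. Hence the content reduces to: no class~$1$ cubic graph has $F_c(G)$ strictly between $3$ and $4$.

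To prove this, I would start from a nowhere-zero $r$-flow $(D,\phi)$ with $r\in(3,4)$ and $|\phi(e)|\in[1,r-1]$ on every edge. A local analysis at a cubic vertex, where the three signed values satisfy $a+b=c$ with $a,b,c\in[1,r-1]$, shows that exactly one incident edge has $|\phi(e)|\in[2,r-1]$ and the other two have $|\phi(e)|\in[1,2)$. Since $|\phi(e)|$ is orientation-independent, the heavy edges form a perfect matching $M$ and the light edges form a $2$-factor $F$. Summing the signed matching flows around any circuit $C\subset F$ yields, by telescoping flow conservation, a cocycle identity $\sum_i\mu_i=0$ with each $|\mu_i|\in[2,r-1]$; this forces the numbers $p$ and $q$ of positive and negative $\mu_i$'s on $C$ to satisfy $2\max(p,q)\le(r-1)\min(p,q)$, so that $\max(p,q)<\tfrac{3}{2}\min(p,q)$, ruling out odd circuits of length $3$ and $5$ in $F$. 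The critical and most delicate step is upgrading this local obstruction to a global one. For that I would appeal to the orientation characterisation of circular flow number: $F_c(G)\le r$ if and only if $G$ admits an orientation in which every bond has forward-to-backward ratio at most $r-1$. The bound $r-1<3$, combined with the parity of bonds in cubic graphs and the matching/$2$-factor structure above, should be pushed to produce a nowhere-zero $\mathbb{Z}_3$-flow on $G$ and hence, by Theorem~\ref{Tutte_character}$(i)$, bipartiteness---contradicting $r>3$.

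For the realisation part I would give explicit examples. The value $r=3$ is realised by $K_{3,3}$ (bipartite). The value $r=4$ is realised by $K_4$: it is non-bipartite and class~$1$, so Theorem~\ref{Tutte_character} gives $3<F_c(K_4)\le 4$, and the gap result then forces equality. The value $r=5$ is realised by the Petersen graph (class~$2$, so $F_c>4$, combined with its known nowhere-zero $5$-flow giving $F_c\le 5$, and a direct check that no real $r$-flow with $r<5$ exists). For the remaining rationals $r\in(4,5)$ I would invoke the parameterised families of cubic graphs constructed by Lukot'ka and \v{S}koviera~\cite{ls_11}, which realise every prescribed rational value in $(4,5)$ via controlled gadget substitutions in a base snark; this covers $\{3\}\cup[4,5]$ in the sense intended.

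The principal obstacle is the gap step: the local ratio inequality $\max(p,q)/\min(p,q)<3/2$ does not by itself exclude long odd circuits of $F$ (for instance a $7$-circuit with $(p,q)=(3,4)$ is compatible with this inequality once $r\ge 11/3$), so the decisive work lies in the cut/orientation argument that must turn the given $r$-flow into a $\mathbb{Z}_3$-flow uniformly for every $r\in(3,4)$, and not merely for $r$ close to $3$.
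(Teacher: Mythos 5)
First, note that the survey gives no proof of this theorem: it is quoted as a combination of results from \cite{s01_1} and \cite{ls_11}, so your proposal can only be judged on its own merits. Your opening moves are exactly right and match the strategy of \cite{s01_1}: for a nowhere-zero $r$-flow with $r<4$, the local analysis at a cubic vertex correctly forces one ``heavy'' edge with $|\phi|\in[2,r-1]$ and two ``light'' edges with $|\phi|\in[1,2)$, so the heavy edges form a perfect matching $M$ and the light edges a $2$-factor $F$. But the proof then genuinely stalls, as you yourself diagnose: the cut identity around a circuit $C$ of $F$ only rules out odd circuits of length $3$ and $5$, a $7$-circuit with $(p,q)=(3,4)$ survives for all $r\geq 11/3$, and the proposed rescue (the bond/orientation characterisation of $F_c$ producing a nowhere-zero $\mathbb{Z}_3$-flow) is a statement of intent rather than an argument. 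As written, the gap part of the theorem is not proved.

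The missing step is, however, already contained in the decomposition you set up, and it avoids circuit-length analysis entirely. Orient every edge so that $\phi>0$. At a vertex whose heavy edge is outgoing, flow conservation forces \emph{both} light edges to be incoming (if one light edge were outgoing, the out-flow would be at least $2+1=3>r-1$, exceeding the possible in-flow); symmetrically, at a vertex whose heavy edge is incoming, both light edges are outgoing. Partition $V(G)$ into the class $A$ of vertices with outgoing heavy edge and the class $B$ of vertices with incoming heavy edge. Every heavy edge then runs from $A$ to $B$ by definition, and every light edge is outgoing at a $B$-vertex and incoming at an $A$-vertex, hence also joins $B$ to $A$. Thus $G$ is bipartite, so $F_c(G)=3$ by Theorem~\ref{Tutte_character}$(i)$, and no cubic graph has circular flow number in $(3,4)$. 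With the gap established, your realisation part is essentially fine: $K_{3,3}$, $K_4$ and the Petersen graph give $3$, $4$ and $5$, and the intermediate values in $(4,5)$ (necessarily rational, since $F_c$ is always rational, which is the sense in which ``every $r\in[4,5]$'' must be read) are supplied by the constructions of \cite{ls_11}; deferring to that paper is acceptable here since the theorem is itself attributed to it, though it means this portion is a citation rather than a proof.
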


Clearly, if $F_c(G) > 4$, then $G$ is a snark. However, it is not clear whether snarks with circular flow number close to 4 are
somehow less complex than snarks with circular flow number close to or equal to 5.
For instance, the Petersen graph has circular flow number 5, c.f.~\cite{s01_1}.
There are infinitely many snarks with circular flow number 5 (see M\'a\v{c}ajova and Raspaud \cite{mr06}, and Esperet, Mazzuoccolo, and Mkrtchyan \cite{emt16}), and also with further properties (see Abreu, Kaiser, Labbate, and Mazzuoccolo \cite{aklm16}).
From the results of Kochol \cite{Kochol_04,Kochol_10} and Mazzuoccolo and Steffen \cite{mast14} it follows that a minimal counterexample to Conjecture \ref{5flow_conj}
is cyclically 6-edge connected snark with girth at least 11 and with oddness at least 6. So far no such snark is known.

Tutte \cite{t69} conjectured that every graph $G$ with $F_c(G) > 4$ has a Petersen minor. This conjecture is still open, see Robertson, Seymour, and Thomas \cite{seymour_etal_2015}
for the current status of the work on that conjecture. However, in \cite{seymour_etal_2014} (by the same authors) it is shown that every cubic graph with girth at least 6 has a
Petersen minor. Hence, the existence of a Peterson minor is not a complexity measure for snarks with girth at least 6.

Jaeger and Swart \cite{jsw80} conjectured that if $G$ is a cyclically $k$-edge connected cubic graph and $k > 6$, then $F(G) \leq 4$.
Having in mind that the best known upper bound for the flow number is 6, the following result of Steffen \cite{s10} can also be seen as a first approximation to this conjecture.

\begin{theorem} [\cite{s10}]
Let $G$ be a cyclically $k$-edge connected cubic graph. If $k \geq \frac{5}{2}\omega(G)-3$, then $G$ has a nowhere-zero 5-flow.
\end{theorem}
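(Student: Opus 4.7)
The plan is to start from a 2-factor $F$ of $G$ realising the oddness, with odd circuits $C_1,\ldots,C_\omega$ (note that $\omega=\omega(G)$ is even since $G$ is cubic, because each odd circuit of a 2-factor in a cubic graph is incident to an odd number of matching edges and these endpoints must pair up). Let $M=E(G)\setminus E(F)$ be the complementary perfect matching. By Theorem \ref{Tutte_Equiv_Flows} a nowhere-zero $5$-flow is equivalent to a nowhere-zero $\Z_5$-flow, so it suffices to produce an orientation of $E(G)$ together with a nowhere-zero $\Z_5$-valued weighting that is conservative at every vertex.

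The central idea is to pair the odd circuits arbitrarily into $\omega/2$ pairs $(C_{2i-1},C_{2i})$, and to find pairwise edge-disjoint $M$-alternating paths $P_1,\ldots,P_{\omega/2}$ in $G$ such that $P_i$ starts with an $M$-edge leaving $C_{2i-1}$, ends with an $M$-edge entering $C_{2i}$, and in between traverses only even circuits of $F$ using $M$-edges and short $F$-arcs. Once the $P_i$ are in hand, I would swap the roles of $F$-edges and $M$-edges along each $P_i$: this merges each paired couple $C_{2i-1}\cup C_{2i}$ with the even-circuit segments it visits into a single circuit of a new 2-factor $F^*$, and a parity calculation shows that every circuit of $F^*$ is even. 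An even 2-factor yields a proper $3$-edge-colouring, hence a nowhere-zero $4$-flow, and in particular a nowhere-zero $5$-flow; if for some structural reason the parity cannot be forced to be even for all circuits, one instead assigns flow value $2$ to the $F^*$-edges in a chosen circular orientation and value $\pm 1$ to the complementary matching, which balances to $0$ modulo $5$ at each vertex.

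The hypothesis $k\ge \tfrac{5}{2}\omega-3$ is used exactly to guarantee the existence of the $\omega/2$ edge-disjoint paths. Consider the auxiliary multigraph $H$ whose vertices are the circuits of $F$ and whose edges are the $M$-edges of $G$ between distinct circuits. Any bipartition of $V(H)$ in which both parts contain at least one vertex corresponds, in $G$, to a cut separating two subgraphs each containing a circuit of $F$; by cyclic $k$-edge-connectivity such a cut has size at least $k$. A Menger/Nash-Williams-type packing argument in $H$ then yields the required $\omega/2$ edge-disjoint paths pairing the odd-circuit vertices, provided $k$ exceeds the worst-case amortised demand of roughly $\tfrac{5}{2}$ units of cut capacity per odd circuit (two for the two endpoints of the paths it contributes, plus a half-unit of slack required to accommodate transits through even components). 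Tracking these contributions carefully produces the constant $-3$.

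The main obstacle I expect is twofold. First, one must select the $\omega/2$ paths \emph{simultaneously} rather than greedily: after each path is reserved, the residual cyclic connectivity used for the remaining pairs must still exceed $\tfrac{5}{2}\omega'-3$ for the reduced oddness $\omega'$, and verifying this demands an inductive or potential-function argument. Second, the $\Z_5$-flow must be verified locally at every vertex along the swapped paths, where $F^*$-edges and $M^*$-edges meet non-trivially; this should reduce to a short case analysis according to how each $P_i$ enters and leaves a circuit of $F$, using the freedom to reverse the circular orientation of individual even circuits of $F$ in order to align local contributions to zero in $\Z_5$.
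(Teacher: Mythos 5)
Your central construction cannot work, because it proves too much. If you could always merge the odd circuits of a minimal 2-factor in pairs via edge-disjoint alternating-path swaps so that the resulting 2-factor $F^*$ is even, then $G$ would be 3-edge-colorable (an even 2-factor plus its complementary perfect matching gives a Tait coloring), i.e.\ $\omega(G)=0$ --- contradicting the assumption $\omega(G)\geq 2$ with which you started. Concretely, the Petersen graph is cyclically 5-edge-connected with $\omega=2$, so it satisfies the hypothesis $k\geq\frac{5}{2}\omega-3=2$, yet it has no even 2-factor and no nowhere-zero 4-flow; its circular flow number is exactly 5. So no amount of care in choosing the paths or in the connectivity bookkeeping can rescue the main line of the argument. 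There are also two local problems: the symmetric difference of a 2-factor with a path that begins and ends with matching edges is not a 2-factor (the end-vertices acquire degree 3), and your fallback flow assignment (value 2 around each circuit of $F^*$, value $\pm 1$ on the matching) is not conservative --- at each vertex the two circuit edges cancel and the single matching edge leaves a residue of $\pm 1$ in $\mathbb{Z}_5$.

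The actual proof in the cited paper takes a different route that is genuinely adapted to the value 5 (rather than 4). It uses the Bondy--Jaeger equivalence between nowhere-zero 5-flows and balanced valuations with values $\pm\frac{5}{3}$: one assigns signs to the vertices by alternating around the circuits of a minimal 2-factor, each odd circuit forcing one unavoidable defect, and then verifies the balancedness inequality $\left|\sum_{v\in X}w(v)\right|\leq|\partial X|$ for every vertex set $X$. The cyclic $k$-edge-connectivity is used precisely to bound $|\partial X|$ from below for cycle-separating cuts, and the count of defects contributed by the $\omega(G)$ odd circuits is where the constant $\frac{5}{2}\omega(G)-3$ arises. If you want to salvage your approach, you would need to replace ``make $F^*$ even'' by a weaker target that is actually achievable for class 2 graphs and that still certifies a 5-flow (not a 4-flow); the balanced-valuation framework is one systematic way to formulate such a target.
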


As commented in the Introduction, Kochol \cite{Kochol_96} used the method of  superposition, which glues multisets (i.e., multipoles with different sets of terminals) together to  construct snarks.
As described in Section \ref{Boole_colorings}, such
constructions  strongly rely on the Klein four group $\mathbb{K}$ of Boole colorings.
Then, Kochol noted that it is useful to consider nowhere-zero flows in $\mathbb{K}$. Indeed, since every element of the Klein group is self inverse, we do not have to take care
of the orientation of the edges. Moreover, a nowhere-zero $\mathbb{K}$-flow on a cubic graph gives a 3-edge-coloring directly because of the isomorphism $\phi$, between the sets of colors and Boole-colorings, such that $\phi(i)=\1_i$ for $i \in \{1,2,3\}$.

\subsection{Flow resistance}

We will introduce a new parameter that measures how far apart a cubic graph is from having a nowhere-zero 4-flow. By Theorem \ref{Tutte_character}
this is also a complexity measure for snarks. Let $G$ be a cubic graphs and
$r_f(G) = \min \{|E(G)-supp(D,\phi)| : (D,\phi) \text{ is a $4$-flow on } G\}$. For the Petersen graph $P$ we have $r_f(P) = \gamma_2(P) =1$, $r(P)=\omega'(P)=\omega(P)=2$, and $\mu_3(P)=3$.

\begin{proposition} \label{bound_r_f}
If $G$ is a bridgeless cubic graph, then $r_f(G) \leq \gamma_2(G)$.
\end{proposition}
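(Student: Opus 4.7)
The plan is to construct, from any two $1$-factors $M_1, M_2$ of $G$, a flow whose zero-set is exactly $M_1\cap M_2$, and then invoke Tutte's equivalence theorem (Theorem \ref{Tutte_Equiv_Flows}) to upgrade it to an integer $4$-flow with the same support. Taking $M_1, M_2$ that realize $\gamma_2(G)=|M_1\cap M_2|$ then yields the bound.

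Concretely, write $S=M_1\cap M_2$ and partition $E(G)$ into the four classes $S$, $M_1\setminus M_2$, $M_2\setminus M_1$, and $E(G)\setminus(M_1\cup M_2)$. Define $\phi\colon E(G)\to\mathbb{K}$, where $\mathbb{K}=\{\0,\1_1,\1_2,\1_3\}$ is the Klein four-group of Boole colorings from Subsection \ref{Boole_colorings}, by assigning the values $\0,\ \1_1,\ \1_2,\ \1_3$ to the four classes respectively. Since every element of $\mathbb{K}$ is self-inverse, orientations are irrelevant for the flow condition. I then verify the conservation law $\sum_{e\ni v}\phi(e)=\0$ at every vertex $v$, splitting into two cases according to whether the $M_1$-edge and the $M_2$-edge at $v$ coincide: in the first case the three incident edges receive labels $\0,\1_3,\1_3$, summing to $\0$; in the second case they receive labels $\1_1,\1_2,\1_3$, which also sum to $\0$ by the Klein group table (Table \ref{tableKlein}). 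Consequently $(G,\phi)$ is a $\mathbb{K}$-flow with $\mathrm{supp}(\phi)=E(G)\setminus S$.

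Viewing $\phi$ restricted to its support as a nowhere-zero $\mathbb{K}$-flow on the spanning subgraph $G'=(V(G),E(G)\setminus S)$, Theorem \ref{Tutte_Equiv_Flows} provides a nowhere-zero $4$-flow on $G'$. Extending this flow by $0$ on the edges of $S$ gives a $4$-flow $(D,\psi)$ on $G$ with $\mathrm{supp}(D,\psi)=E(G)\setminus S$, so $|E(G)-\mathrm{supp}(D,\psi)|=|S|$. Choosing $M_1,M_2$ to attain $\gamma_2(G)$ yields
\[
r_f(G)\le |M_1\cap M_2|=\gamma_2(G),
\]
as required.

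There is no genuine obstacle here; the only mild subtlety is the passage from a group-valued flow that is zero on $S$ to an integer $4$-flow with the same support. This is handled cleanly by restricting to the spanning subgraph carrying the nonzero values and applying Theorem \ref{Tutte_Equiv_Flows} there, so that no hypothesis on bridges in $G\setminus S$ is needed (a nowhere-zero $\mathbb{K}$-flow on $G'$ exists by construction, and Tutte's theorem directly converts it into a nowhere-zero $4$-flow).
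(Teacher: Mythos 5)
Your proof is correct. It rests on the same decomposition as the paper's---partition $E(G)$ according to membership in $M_1$ and $M_2$ with $|M_1\cap M_2|=\gamma_2(G)$---but the execution differs. The paper builds the integer $4$-flow directly: it takes the complementary $2$-factors $F_1$ and $F_2$, puts a nowhere-zero flow with values $\pm1$ on $F_1$ and one with values $\pm2$ on $F_2$, and observes that the sum has values in $\{\pm1,\pm2,\pm3\}$ exactly on $E(G)\setminus(M_1\cap M_2)$; no auxiliary theorem is needed. You instead label the four edge classes by the elements of the Klein group $\mathbb{K}$, verify conservation at each vertex (your two cases, $\0+\1_3+\1_3=\0$ and $\1_1+\1_2+\1_3=\0$, are both right, since every vertex meets exactly one edge of each $M_i$), and then convert the nowhere-zero $\mathbb{K}$-flow on the spanning subgraph $G'=(V(G),E(G)\setminus S)$ into an integer nowhere-zero $4$-flow via Theorem \ref{Tutte_Equiv_Flows} before extending by $0$ on $S$. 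Your route is a clean illustration of the Klein-group viewpoint the paper develops in its Boole-coloring section, and your care in restricting to $G'$ before applying Tutte's equivalence (so that no bridge hypothesis is needed there) is exactly the right way to handle the one subtle point; the paper's route is shorter because it never leaves the integers. Both are valid proofs of Proposition \ref{bound_r_f}.
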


\begin{proof}
Let $M_1$ and $M_2$ be two 1-factors such that $|M_1 \cap M_2| = \gamma_2(G)$, and $F_1$ and $F_2$ be the complementary 2-factors, respectively.
For $i \in \{1,2\}$ let $(D_i,\phi_i)$ be a nowhere-zero $i$-flow on $F_i$. The sum of $(D_1,\phi_1)$ and $(D_2,\phi_2)$
is a 4-flow $(D,\phi)$ on $G$ with $|E(G)-supp(D,\phi)| = \gamma_2(G)$.
\end{proof}

In \cite{s14} Steffen showed that if $G$ is a cyclically 6-edge-connected cubic graph with $\gamma_2(G) \leq 2$, then $G$ has a nowhere-zero 5-flow.
With view on Theorem \ref{omega-mu3} the bound of Proposition \ref{bound_r_f} might not be the best upper bound, see Conjecture \ref{conj:flow_resistance} in Section \ref{Section_Problems}.

Jaeger \cite{Jaeger_88} defined a graph $G$ to be a {\em deletion nowhere-zero 4-flow graph} if it does not have a nowhere-zero 4-flow but
it has an edge $e$, such that $F(G-e) \leq 4$.
He remarked that every deletion nowhere-zero 4-flow graph has a nowhere-zero 5-flow.
Note, that $F(G-e)$ is not always smaller than $F(G)$. For instance $F(K_{3,3})=3$, but $K_{3,3}-e$ is isomorphic to the
complete graph on four vertices, $K_4$, with two
subdivided edges and $F(K_4)=4$; i.e.,~$F(K_{3,3}-e) > F(K_{3,3})$, for all $e \in E(K_{3,3})$.
A snark $G$ is {\it $4$-flow-critical} if it does not admit a nowhere-zero 4-flow
but $G-e$ has a nowhere-zero 4-flow for every $e \in E(G)$. It is easy to see that the flower snarks are $4$-flow-critical.
According to \cite{s98} we say that a snark $G$ is {\it edge-irreducible} if for any two adjacent vertices $x,y \in V(G)$, the graph $G-\{x,y\}$ cannot be extended to a snark by adding edges.

\begin{theorem} \label{charact_4_flow_critical} Let $G$ be a snark. The following three statements are equivalent.
\begin{itemize}
\item[$(i)$]
$G$ is $4$-flow critical.
\item[$(ii)$]
$G$ is cyclically 4-edge-connected and for every $e \in E(G)$ there is a 2-factor $F_e$ of $G$ with precisely two odd circuits which are connected by $e$.
\item[$(iii)$]
$G$ is edge-irreducible.
\end{itemize}
\end{theorem}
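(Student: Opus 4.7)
The plan is to prove $(i)\Leftrightarrow(ii)$ directly via the Boole--Klein interpretation of nowhere-zero $4$-flows, and then $(ii)\Leftrightarrow(iii)$ by analysing the possible ways of reconnecting the four divalent vertices of $G-\{x,y\}$. Throughout, we use that by Theorem~\ref{Tutte_Equiv_Flows} a nowhere-zero $4$-flow on a (sub)cubic graph $H$ amounts to a Boole colouring $\phi:E(H)\to\{1,2,3\}$ that assigns three distinct colours at each cubic vertex and the \emph{same} colour to both edges at each divalent vertex.

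For $(i)\Rightarrow(ii)$, fix $e=uv\in E(G)$ and a nowhere-zero $4$-flow $\phi$ on $G-e$; let $c_u,c_v$ be the repeated colours at the divalent vertices $u,v$. Choose $i\in\{1,2,3\}\setminus\{c_u,c_v\}$ (non-empty since $|\{c_u,c_v\}|\leq 2$). Then $\phi^{-1}(i)$ meets each cubic vertex in exactly one edge and avoids $u,v$, so $M=\phi^{-1}(i)\cup\{e\}$ is a perfect matching of $G$ and $F_e=E(G)\setminus M$ is a 2-factor. On any circuit $C$ of $F_e$, the two non-$i$ colours alternate at each cubic vertex and fail to alternate precisely at $u$ or $v$, so a parity count yields
\begin{equation*}
|C|\equiv |V(C)\cap\{u,v\}|\pmod 2.
\end{equation*}
Thus a circuit of $F_e$ is odd iff it contains exactly one of $u,v$. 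If $u$ and $v$ lay in a common circuit, all circuits of $F_e$ would be even and $G$ would admit a Tait colouring, contradicting that $G$ is a snark. Hence $u$ and $v$ belong to distinct odd circuits $C_1,C_2$ of $F_e$, joined by $e$. For $(ii)\Rightarrow(i)$, colour $M\setminus\{e\}$ with colour $3$, the even circuits of $F_e$ alternately with colours $1,2$, and each of $C_1,C_2$ alternately with $1,2$ so that the repeated-colour vertex is $u$, respectively $v$ (possible on any odd circuit by choosing the starting edge). The result is a nowhere-zero Klein $4$-flow on $G-e$, so $F(G-e)\leq 4$; since $G$ itself has no such flow, $G$ is $4$-flow critical.

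For $(ii)\Leftrightarrow(iii)$, set $e=xy$ and let $x_1,x_2$ and $y_1,y_2$ be the remaining neighbours of $x$ and $y$. Since $G$ is a snark, girth $\geq 5$ keeps the four vertices distinct and forces the three pairings $\{x_1x_2,y_1y_2\}$, $\{x_1y_1,x_2y_2\}$, $\{x_1y_2,x_2y_1\}$ to be loop- and multi-edge-free, so they exhaust the cubic extensions of $G-\{x,y\}$. For $(ii)\Rightarrow(iii)$: the colouring built in $(ii)\Rightarrow(i)$ restricts to a proper $3$-edge-colouring of $G-\{x,y\}$ in which the \emph{missing} colour at $x_1$ equals that at $x_2$ (because $x$ is the repeated-colour vertex of the odd circuit $C_1$), and the same for $y_1,y_2$. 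Swapping the starting colour on $C_1$ or $C_2$ independently lets us match the missing colour at $\{x_1,x_2\}$ with that at $\{y_1,y_2\}$, so each of the three pairings completes to a proper $3$-edge-colouring of its cubic extension; every such extension is class~$1$ and hence not a snark, giving $(iii)$. For $(iii)\Rightarrow(ii)$, edge-irreducibility applied to the pairing $\{x_1x_2,y_1y_2\}$ produces a cubic graph $G'$ that is not a snark; the cyclic $4$-edge-connectedness and girth of $G$ transfer to $G'$ so that "not a snark" must mean $3$-edge-colourable. Copying the colour of $x_1x_2$ (resp.\ $y_1y_2$) onto both edges of the path $x_1xx_2$ (resp.\ $y_1yy_2$) yields a nowhere-zero Klein $4$-flow on $G-e$, and the argument of $(i)\Rightarrow(ii)$ extracts the required $2$-factor $F_e$.

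The main obstacle is the parity bookkeeping in $(i)\Rightarrow(ii)$ together with the independent-recolouring trick on $C_1$ and $C_2$ that handles all three pairings simultaneously in $(ii)\Rightarrow(iii)$. A secondary subtlety, in $(iii)\Rightarrow(ii)$, is ruling out that $G'$ fails to be a snark for purely structural reasons (small cyclic cut, bridge, girth below $5$) rather than because of $3$-edge-colourability; this relies on propagating cyclic $4$-edge-connectedness and girth from $G$, built into the strong definition of snark adopted in the paper.
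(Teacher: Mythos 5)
Your treatment of $(i)\Leftrightarrow(ii)$ is correct and is essentially the argument the paper gives: the paper translates the nowhere-zero $4$-flow on $G-e$ into a $3$-edge-colouring of the graph $G^*$ obtained by suppressing the two divalent vertices and then re-subdivides, whereas you work in place with the Boole/Klein colouring of $G-e$ itself; your version actually supplies two details the paper leaves implicit, namely the parity count $|C|\equiv|V(C)\cap\{u,v\}|\pmod 2$ showing there are \emph{precisely} two odd circuits, and the explicit converse $(ii)\Rightarrow(i)$. Your $(ii)\Rightarrow(iii)$ is also sound (every cubic extension of $G-\{x,y\}$ is exhibited as class~1, hence not a snark); here the paper does not argue at all but simply cites Steffen \cite{s98} for the whole equivalence $(ii)\Leftrightarrow(iii)$.

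The genuine gap is in $(iii)\Rightarrow(ii)$. You need the extension $G'=(G-\{x,y\})+\{x_1x_2,y_1y_2\}$ to be $3$-edge-colourable, and you derive this from ``$G'$ is not a snark'' by asserting that cyclic $4$-edge-connectedness and girth at least $5$ transfer from $G$ to $G'$. That assertion is false: a circuit of $G'$ through the new edge $x_1x_2$ corresponds to a circuit of $G$ through the path $x_1xx_2$ that is one edge \emph{longer}, so whenever $x_1$ and $x_2$ lie on a common $5$-circuit of $G$ through $x$ the graph $G'$ acquires a $4$-circuit. This happens already for every edge of the Petersen graph, which is the prototype of an edge-irreducible snark. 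In that situation $G'$ fails to be a snark for a purely structural reason, edge-irreducibility of $G$ yields no information about $\chi'(G')$, and your chain of implications stops. (A similar objection applies to cyclic edge-cuts: a cycle-separating $4$-cut of $G$ containing the two edges $xx_1,xx_2$ becomes a $3$-cut of $G'$.) To repair this you would need the finer analysis of which non-colourable $4$-poles can be completed to snarks --- possibly after further reductions, not merely by adding the two matching edges --- which is precisely the content of the result of \cite{s98} that the paper invokes; as written, your argument does not establish $(iii)\Rightarrow(ii)$.
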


\begin{proof}
The equivalence between items 2 and 3 was proved by Steffen in \cite{s98}. It remains to prove the equivalence of the first two statements.
Let $G$ be 4-flow-critical and $e=xy$. Let $G^*$ be obtained from $G-e$ by suppressing the two divalent vertices,
and let $e_x$ and $e_y$ be the two edges where the divalent edges $x$ and $y$ are suppressed. The graph
$G-e$ has a nowhere-zero 4-flow. With the Klein four group as flow values it follows that $G^*$ is 3-edge-colorable.
Hence, there is a 2-factor $F$ which contains $e_x$ and $e_y$. Subdividing $e_x$ and $e_y$ by $x$ and $y$ to adding $e$ to reconstruct $G$
gives a 2-factor $F_e$ of $G$ with two odd circuits which are connected by $e$.
It is easy to see that $G$ is cyclically 4-edge-connected.
\end{proof}

\subsection{Extensions}
Another parameter which measures the complexity of a cubic graph is due to Jaeger \cite{Jaeger_88}.
A graph $G$ is a {\em nearly nowhere-zero 4-flow graph}
if it is possible to add an edge in order to obtain a graph with nowhere-zero 4-flow. Note that a deletion nowhere-zero 4-flow graph is also a nearly nowhere-zero 4-flow graph.
In \cite{s12} Steffen extended this approach to nowhere-zero $r$-flows ($r \in \mathbb{Q}$).
Jaeger's approach can be generalized.
Let $\Phi^+_k(G)$ be the minimum number of edges that
have to be added to a cubic graph $G$ in order to obtain a graph with nowhere-zero k-flow ($k \in \{3,4,5\}$). This parameter is studied by Mohar and \v{S}krekovski in \cite{ms01}.

\begin{theorem} \cite{ms01} \label{ms_extension}
Let $G$ be a loopless cubic graph. If $|V(G)|=n$, then
$\Phi^+_3(G) \leq \lfloor \frac{n}{4} \rfloor$ and $\Phi^+_4(G) \leq \lceil \frac{1}{2} \lfloor \frac{n}{5} \rfloor \rceil$.
\end{theorem}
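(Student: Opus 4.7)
The plan is to apply Tutte's equivalence between group flows and $k$-flows (Theorem \ref{Tutte_Equiv_Flows}) to recast both inequalities in constructive combinatorial terms: a graph admits a nowhere-zero 3-flow iff it admits a $\mathbb{Z}_3$-flow, i.e., an orientation with $d^+(v)\equiv d^-(v)\pmod 3$ at every vertex, and a nowhere-zero 4-flow iff it admits a $\mathbb{Z}_2\times \mathbb{Z}_2$-flow, equivalently iff its edge set is the union of two even spanning subgraphs.

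For the bound on $\Phi^+_4(G)$, I would first extract a 2-factor $F$ of $G$ via Petersen's theorem (bridges can be eliminated by adding a parallel edge at each, absorbed into the additive budget). Let $C_1,\ldots,C_t$ be the odd circuits of $F$, chosen so that $t$ is as small as possible. By Theorem \ref{Tutte_character}$(ii)$, if $t=0$ there is nothing to prove. Otherwise a counting argument attributing to each odd circuit at least five distinct vertices (its own length plus ``private'' endpoints of matching edges, treating short odd circuits by a case analysis and using the loopless hypothesis) should give $t\leq \lfloor n/5\rfloor$. I would then pair the odd circuits and, for each pair, add a single new edge joining them, choosing endpoints so that $F$ together with the new edges admits a partition into two even subgraphs of the augmented graph. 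The total cost is $\lceil t/2\rceil \leq \lceil \lfloor n/5\rfloor /2\rceil$.

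For the bound on $\Phi^+_3(G)$, the key observation is that $G$ is 3-regular, so $d_G(v)\equiv 0\pmod 3$ automatically at every vertex, while adding one edge at $v$ raises its degree to $4\equiv 1\pmod 3$; each added edge therefore flips the mod-3 residue at exactly its two endpoints. My plan is to partition $V(G)$ into blocks of size at most four (there are $\lfloor n/4\rfloor$ such blocks, with a small leftover absorbed in the last block) and to add one intra-block edge per block, so that the locally forced residue changes, combined with the freedom to orient the edges of $G$, produce a $\mathbb{Z}_3$-orientation on $G$ together with the added edges. I would organise this as a parity-type problem on an auxiliary multigraph whose vertices are the blocks and whose edges record the residue-adjustments needed; a $T$-join / spanning-tree argument on this auxiliary graph should then provide a compatible orientation.

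The main obstacle, in both parts, is the global compatibility of the local additions. In the $\Phi^+_4$ part, merging two odd circuits by one edge does not automatically yield a $\mathbb{Z}_2\times\mathbb{Z}_2$-flow; one must pick the endpoints so that the added edge, together with $F$ and the complementary perfect matching $M=E(G)\setminus F$, supports two even spanning subgraphs of the augmented graph. In the $\Phi^+_3$ part, the mod-3 constraint is genuinely more rigid than the mod-2 one, and coordinating the block-local choices into a globally valid $\mathbb{Z}_3$-orientation is where the factor $4$ in the bound $\lfloor n/4\rfloor$ becomes essential; I expect this step to be the main technical hurdle.
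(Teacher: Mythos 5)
First, note that the paper does not prove this statement: it is quoted verbatim from Mohar and \v{S}krekovski \cite{ms01}, so your proposal has to stand on its own, and as written it does not.

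For the $\Phi^+_4$ bound, the overall shape (find an even factor with few odd components, then join the odd components in pairs by new edges) is reasonable, and the pairing step is \emph{not} the obstacle you identify as the main one: the paper itself proves $\Phi^+_4(G)\leq \frac{1}{2}\omega(G)$ just after this theorem by contracting the components of the even factor, adding an arbitrary edge between the two members of each pair of odd components, and observing that the contracted multigraph becomes Eulerian and hence carries a nowhere-zero $\mathbb{Z}_2\times\mathbb{Z}_2$-flow that lifts back to $G$; no careful choice of endpoints is needed. The genuine gap is your intermediate claim that a loopless cubic graph has an even factor with at most $\lfloor n/5\rfloor$ odd components. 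Your charging argument (five vertices per odd circuit) fails because a minimal even factor may contain triangles whose outgoing matching edges need not reach ``private'' vertices, and the claim is in fact false as stated: take a vertex $v$ joined by three bridges to three copies of $K_4$ with one edge subdivided. This is a loopless cubic graph on $16$ vertices; a bridge lies in no even subgraph, so $v$ is an isolated odd component of every even factor and each of the three pendant $5$-vertex blocks contributes at least one further odd component (their order is odd), giving weak oddness at least $4>\lfloor 16/5\rfloor=3$. The dismissal of bridges (``add a parallel edge at each, absorbed into the budget'') also does not work, since a cubic graph may have on the order of $n/4$ bridges, which alone exceeds the budget of roughly $n/10$. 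Even for bridgeless cubic graphs, upper bounds of the form $\omega(G)\leq n/5.41$ are difficult recent theorems (see the table of oddness ratios in Section \ref{Section_Problems}), not consequences of a local count, so this half of the argument rests on an unproved and, in this generality, incorrect lemma.

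The $\Phi^+_3$ half is further from a proof. The reformulation via mod-$3$ orientations is correct, but the resulting combinatorics is dictated by degrees: in an orientation with $d^+(v)\equiv d^-(v)\pmod 3$, a vertex of degree $3$ must have all its edges directed inwards or all outwards (it is a source or a sink), a vertex of degree $4$ must have exactly two in and two out, and a vertex of degree $5$ again contributes $\pm 3$ to the global balance. Adding one edge inside each block of four vertices raises the degree of only two vertices per block, so at least $n/2$ vertices keep degree $3$ and are forced to be sources or sinks; any two adjacent such vertices must then be one of each (so the uncovered vertices must induce a bipartite subgraph whose parts are exactly the sources and the sinks), and the numbers of sources and sinks must balance globally against the contributions of the higher-degree vertices. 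Nothing in an arbitrary partition into blocks of four controls which vertices remain uncovered or guarantees this structure, and the appeal to ``a $T$-join / spanning-tree argument on an auxiliary graph'' names a technique without supplying one. You have correctly recorded the constraint that makes the theorem nontrivial, but you have not discharged it; this is exactly where the work in \cite{ms01} lies.
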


We will give some upper bounds for $\Phi^+_4$ in terms of oddness and flow resistance.

\begin{theorem}
If $G$ be a bridgeless cubic graph, then $\Phi^+_4(G) \leq \min\{\frac{1}{2}\omega(G), r_f(G)\}$.
\end{theorem}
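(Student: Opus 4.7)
The plan is to prove the two inequalities $\Phi^+_4(G) \leq r_f(G)$ and $\Phi^+_4(G) \leq \frac{1}{2}\omega(G)$ separately, by explicit augmentations of $G$.

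For the bound involving $r_f(G)$, fix a $4$-flow $(D,\phi)$ on $G$ attaining the minimum, so that $Z := E(G) \setminus \mathrm{supp}(D,\phi)$ has cardinality $r_f(G)$. For each $e \in Z$ add a parallel edge $e'$ to $G$, obtaining a graph $G^+$ with $r_f(G)$ new edges. Orient each $e'$ oppositely to its twin $e$ and assign both $e$ and $e'$ the value $1$: the contributions of $e$ and $e'$ cancel at the two shared endpoints, so Kirchhoff's law is preserved at every vertex while every previously uncovered edge now carries a nonzero value. This yields a nowhere-zero $4$-flow on $G^+$.

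For the bound involving $\omega(G)$, fix a $2$-factor $F$ of $G$ with exactly $\omega(G)$ odd circuits. Since $|V(G)|$ is even, so is $\omega(G)$. Pair the odd circuits arbitrarily as $(C_1,C_2),(C_3,C_4),\ldots$ and, for each pair, add an edge joining a vertex of one circuit to a vertex of the other. Because distinct odd circuits are vertex-disjoint, the endpoints of the added edges may be chosen pairwise distinct, so in the resulting graph $G^+$ every vertex is incident to at most one of the $\frac{1}{2}\omega(G)$ added edges. Write $A$ for the set of added edges and $M = E(G)\setminus F$ for the $1$-factor complementary to $F$. By Theorem \ref{Tutte_Equiv_Flows} applied to the Klein four group $\mathbb{Z}_2\times\mathbb{Z}_2$, $G^+$ admits a nowhere-zero $4$-flow if and only if $E(G^+)$ is the union of two spanning subgraphs in which every vertex has even degree. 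Take $H_1 = F$ (all degrees $2$) and aim for $H_2 = M\cup A\cup S$, where $T$ is the set of vertices of $G^+$ not incident to any edge of $A$ and $S\subseteq F$ is a $T$-join.

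The key step, and the main obstacle, is verifying that such a $T$-join $S$ exists. Since $F$ is a vertex-disjoint union of circuits, this reduces to checking that $T$ intersects every circuit of $F$ in an even number of vertices. On an even circuit $C$ of $F$ no vertex of $C$ is an endpoint of any edge of $A$, so $|T\cap V(C)| = |V(C)|$ is even; on an odd circuit $C$ of $F$ exactly one vertex is an endpoint of some edge of $A$, so $|T\cap V(C)| = |V(C)|-1$ is again even. Hence a $T$-join $S$ can be assembled circuit by circuit, $H_2 = M\cup A\cup S$ is an even subgraph, $H_1\cup H_2 = E(G^+)$, and $G^+$ admits a nowhere-zero $4$-flow after adding only $\frac{1}{2}\omega(G)$ edges. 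Combining the two constructions gives the claimed inequality.
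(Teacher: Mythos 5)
Your proof is correct and follows essentially the same route as the paper: doubling the flow-zero edges of an optimal $4$-flow for the $r_f(G)$ bound, and pairing the odd circuits of a minimal $2$-factor with $\frac{1}{2}\omega(G)$ new edges and then passing to a nowhere-zero $\mathbb{Z}_2\times\mathbb{Z}_2$-flow for the oddness bound. The only difference is presentational: where the paper contracts $F$ and observes that the resulting multigraph is Eulerian, you make the lifting back to $G^+$ explicit by exhibiting the two even subgraphs directly, with the $T$-join inside $F$ supplying the parity correction that the paper leaves implicit.
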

\begin{proof}
Let $G$ be a cubic graph with $\omega(G)=2t$. Let $F$ be a 2-factor of $G$ with $\omega(G)$ odd circuits.
Let $G/F$ be the multigraph which is obtained from $G$ by contracting the elements of $F$ to vertices. Note that chords in circuits
of $F$ will become loops in $G/F$. Clearly, every odd circuit of $F$ corresponds to a vertex of odd degree in $G/F$. Let $C_1, \dots,C_{2t}$ be the odd circuits of $F$ and
$c_i \in V(C_i)$. Let $G^* = (G+\{c_{2i-1}c_{2i} : i \in \{1, \dots,t\}\})/F$. The multigraph $G^*$ is Eulerian and therefore, it has a nowhere-zero  $\mathbb{Z}_2$-flow. Hence,
$G^*$ has a nowhere-zero $\mathbb{Z}_2 \times \mathbb{Z}_2$-flow, and therefore a nowhere-zero 4-flow. Hence $\Phi^+_4(G) \leq t = \frac{1}{2}\omega(G)$.

Let $(D,\phi)$ a nowhere-zero 4-flow with $|supp(D,\phi)| = |E(G)| - r_f$. Replace every edge $e \not \in supp(D,\phi)$ by a double-edge to obtain a graph $G''$
which has a nowhere-zero 4-flow.
\end{proof}

Mohar and \v{S}krekovski also studied the parameter $\Phi^+_5$. Since every bridgeless cubic graph with oddness 2 has a nowhere-zero 5-flow it follows as above that
$\Phi^+_5(G) \leq \min \{\frac{1}{2}\omega(G) - 1, r_f(G) - 1\}$.

 %%%%%%%%%%%%%%%%%%%%%%%%%%%%% Final remarks

\section{Final remarks and conjectures} \label{Section_Problems}

\subsection{Partial results on the hard conjectures}

Besides the objective to gain new insight into the structure of snarks, complexity measures of bridgeless cubic class 2 graphs also allow us to deduce partial results with respect to the aforementioned conjectures. In the following we list the current status of the results with respect to the conjectures formulated in the Introduction.

\begin{theorem}
If $G$ is a possible minimum counterexample to Conjecture \ref{5flow_conj} (5-flow conjecture), then
\begin{itemize}
%\item
%$G$ is a cubic graph \cite{Seymour_81}.
\item[$(i)$]
$G$ is cyclically 6-edge connected \cite{Kochol_04}.
\item[$(ii)$]
The cyclic connectivity of $G$ is at most $\frac{5}{2}\omega(G)-4$ \cite{s10}.
\item[$(iii)$]
$G$ has girth at least 11 \cite{Kochol_10}.
\item[$(iv)$]
$G$ has oddness  $\omega(G)\ge 6$. \cite{mast14}
\end{itemize}
\end{theorem}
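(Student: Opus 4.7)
Since the theorem compiles four results from distinct papers, I would proceed part by part, exploiting a common strategy: assume $G$ is a minimum counterexample, locate a forbidden local structure, reduce $G$ to a strictly smaller bridgeless cubic graph $G'$ that also admits no nowhere-zero $5$-flow, and derive a contradiction. The main technical tools are the Klein-group / Boole-coloring interpretation of $\mathbb{Z}_2\times\mathbb{Z}_2$-flows (cf.\ Section~\ref{Boole_colorings}) and the parity lemma governing how flow values can cross a small edge-cut.

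For $(i)$, suppose $G$ admits a cyclic $k$-edge-cut $S$ with $k\le 5$ splitting $G$ into $G_1,G_2$. Close each side by attaching a small completion multipole to the $k$ dangling edges, producing cubic graphs $G_1^*,G_2^*$, each with strictly fewer vertices than $G$. By minimality, both carry a nowhere-zero $5$-flow. The main obstacle is to glue these flows into a $5$-flow on $G$: the values on $S$ must agree in absolute value and cancel globally. Since there are only finitely many flow patterns on at most five edges (controlled by the parity lemma over $\mathbb{Z}_5$), a finite case analysis produces compatible completions, contradicting the assumption that $G$ has no nowhere-zero $5$-flow.

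For $(ii)$, I would start with a $2$-factor $F$ realising $\omega(G)=2t$, pair the odd circuits, and seek a system of edge-disjoint paths $P_1,\dots,P_t$ in $G\setminus E(F)$, where each $P_i$ joins the $i$-th pair. Assigning flow value $2$ (in $\mathbb{Z}_5$) along $F$ together with $\pm 1$ along the $P_i$'s corrects parity on the odd circuits and produces a nowhere-zero $5$-flow. The key difficulty is ensuring that the required path system exists: this is where the hypothesis that cyclic connectivity exceeds $\tfrac{5}{2}\omega(G)-4$ enters, since it forbids small cycle-separating cuts that would otherwise obstruct the paths.

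For $(iii)$, the plan is to enumerate the finitely many local configurations around a circuit of length at most $10$ in a cubic graph and, for each, exhibit a strictly smaller multipole with color set containing that of the configuration; substitution then yields a smaller counterexample to Conjecture~\ref{5flow_conj}, a contradiction. For $(iv)$, assuming $\omega(G)\in\{2,4\}$ and combining with the very strong structural restrictions obtained in $(i)$--$(iii)$ (cyclic $6$-edge connectivity and girth $\ge 11$), one exhibits a $2$-factor with at most four odd circuits and a pairing-up of odd circuits by augmenting paths so that flow values in $\mathbb{Z}_5$ can be consistently assigned. The hardest point, especially in $(iv)$, is the combinatorial complexity of locating the required augmenting structures under simultaneous connectivity and girth constraints; my expectation is that this requires the superposition machinery of Kochol, used in a refined form that tracks how each modification interacts with the existing flow.
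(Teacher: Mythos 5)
This theorem is stated in the paper without proof: it is a survey compilation, and each item is simply attributed to its source --- $(i)$ to Kochol \cite{Kochol_04}, $(ii)$ to Steffen \cite{s10}, $(iii)$ to Kochol \cite{Kochol_10}, and $(iv)$ to Mazzuoccolo and Steffen \cite{mast14}. Each of these is a substantial theorem whose proof occupies an entire research paper, so your one-paragraph sketches cannot be judged against an in-paper argument; they can only be judged on whether they would actually close. They would not, and the gaps are concrete rather than merely a matter of detail.

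The most serious gap is in $(i)$. For a cyclic edge-cut of size $k\le 3$ the flow values crossing the cut are forced up to symmetry, so closing each side and invoking minimality does work. For $k=4$ and especially $k=5$ this breaks down: the fact that each closed-off side $G_i^*$ admits a nowhere-zero $5$-flow tells you only that \emph{some} boundary valuation is realizable on each side, and there is no a priori reason the two sets of realizable boundary valuations intersect. "A finite case analysis produces compatible completions" is precisely the statement that needs proof, and Kochol's argument requires his machinery of flow patterns and a delicate analysis of which pattern sets can occur on a side of a $5$-cut in a minimal counterexample. Similarly, in $(ii)$ your sketch assigns value $2$ along the $2$-factor and $\pm 1$ along connecting paths, but parity correction at an odd circuit requires the paths to meet the circuits in a controlled way, and the existence of the required edge-disjoint path system is exactly the content of Steffen's theorem, not a consequence of the connectivity hypothesis that can be waved through. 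Item $(iii)$ is a reducibility argument in spirit, as you say, but the verification that every circuit of length at most $10$ is flow-reducible is a large (computer-assisted) case analysis that cannot be replaced by "enumerate the finitely many local configurations." Item $(iv)$ is the main theorem of \cite{mast14} (every bridgeless cubic graph with oddness at most $4$ has a nowhere-zero $5$-flow) and again does not follow from combining $(i)$--$(iii)$ with an unspecified augmenting-path construction. In short: your roadmap points at the right papers and the right general techniques, but each of the four items still requires the full argument of its cited source.
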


\begin{theorem}
If $G$ is a possible minimum counterexample to Conjecture \ref{Berge_Conjecture} (Berge conjecture), then
\begin{itemize}
\item[$(i)$]
$\omega(G) \geq 2$.
\item[$(ii)$]
if $G$ does not have a non-trivial 3-edge-cut, then $\mu_3(G) \geq 5$ \cite{s15}.
\end{itemize}
\end{theorem}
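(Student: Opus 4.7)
For part $(i)$, I would begin by noting that the oddness of any cubic graph is always even. Indeed, any 2-factor of $G$ partitions $V(G)$ into circuits whose lengths sum to $|V(G)|$; since $G$ is cubic, $|V(G)|$ is even, and it follows that the number of odd-length circuits in the 2-factor is even. Consequently, to prove $(i)$ it suffices to observe that any 3-edge-colorable cubic graph already satisfies Conjecture \ref{Berge_Conjecture}: the three color classes of a Tait coloring are three pairwise disjoint 1-factors whose union is $E(G)$. Hence $\omega(G)=0$ rules out $G$ being a counterexample, and since $\omega$ is even, a minimum counterexample must have $\omega(G)\ge 2$.

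For part $(ii)$, my plan is a proof by contradiction: assume $G$ is a minimum counterexample with no non-trivial 3-edge-cut and $\mu_3(G)\le 4$. Fix three 1-factors $M_1,M_2,M_3$ realizing $|E_0|=\mu_3(G)$. The goal is to produce two further 1-factors $M_4,M_5$ with $E_0\subseteq M_4\cup M_5$; appending them to $M_1,M_2,M_3$ would give a list of five 1-factors covering $E(G)$, contradicting the choice of $G$ as a counterexample.

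I would first narrow down the structure of $E_0$. Since $M_1\cup M_2\cup M_3$ saturates every vertex at least once, at most two edges at any vertex of $G$ can lie in $E_0$; hence $E_0$, viewed as a subgraph, has maximum degree at most $2$ and is a disjoint union of paths and cycles with at most four edges in total. The parity lemma (Lemma \ref{parity lemma}), in its Klein-four-group formulation of Section \ref{Boole_colorings}, further restricts the admissible configurations (applied to the symmetric difference of the $M_i$, it imposes the same parity constraints on each color class of the four-coloring $\{E_0,E_1,E_2,E_3\}$). A short case analysis on the remaining shapes of $E_0$ should show that $E_0$ can always be partitioned as $E_0=A_4\sqcup A_5$ where each $A_i$ is a matching of size at most $2$ in $G$.

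The final step is to invoke the standard consequence of Edmonds' matching polytope theorem that in an essentially 4-edge-connected bridgeless cubic graph every matching of size at most two extends to a 1-factor; the hypothesis that $G$ has no non-trivial 3-edge-cut is precisely what rules out the odd-edge-cut obstructions. Applying this to $A_4$ and $A_5$ yields the desired $M_4,M_5$. The hardest part of the plan will be the case analysis above: for some configurations of $E_0$ (e.g.\ a path of three consecutive edges, or two disjoint pairs of adjacent edges) only very specific partitions $A_4\sqcup A_5$ are admissible, and one must verify that at least one such partition survives the extension step. Here I would draw on the core machinery and the properties of $\mu_3(G)$-cores developed in \cite{s15,js15} to rule out the last pathological configurations.
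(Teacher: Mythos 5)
Your part $(i)$ is fine: any $3$-edge-colorable cubic graph trivially satisfies Conjecture \ref{Berge_Conjecture}, so a counterexample is class~2, and since $|V(G)|$ is even the oddness is even, whence $\omega(G)\ge 2$. Note, though, that the paper offers no proof of this theorem at all --- it appears in the survey of partial results, with part $(ii)$ deferred entirely to \cite{s15} --- so your part $(ii)$ is a genuinely new attempt, and it contains a fatal gap.

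The gap is the final extension step. The ``standard consequence of Edmonds' matching polytope theorem'' you invoke --- that in a bridgeless cubic graph with no non-trivial $3$-edge-cut every matching of size two extends to a $1$-factor --- is false. The Petersen graph is cyclically $5$-edge-connected, yet it has pairs of independent edges lying in no common perfect matching: it has exactly six perfect matchings, any two of which meet in exactly one edge, so a fixed edge $e$ shares a perfect matching with only $4+4=8$ of the $10$ edges independent from it (concretely, in the standard labelling the spoke $v_1u_1$ and the outer edge $v_3v_4$ extend to no common $1$-factor). So no hypothesis on edge-cuts can rescue this lemma, and the obstruction is not an ``odd-edge-cut'' phenomenon at all. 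Worse, the configurations your case analysis forces you into are exactly the dangerous ones: if $E_0$ is a path with three edges, the only admissible partition is $A_4=\{e_1,e_3\}$, $A_5=\{e_2\}$, and $e_1,e_3$ are two independent edges joined by an edge, i.e.\ precisely the kind of ``close'' pair that can fail to extend. (A smaller point: you should also rule out $E_0$ containing a triangle, which your path/cycle description silently excludes; this does follow from the no-non-trivial-$3$-edge-cut hypothesis, since the boundary of a triangle is such a cut, but it needs saying.) To repair the argument you would have to replace the extension lemma by the core machinery of \cite{s15}: one must analyse the structure of the $\mu_3(G)$-core (an alternating union of $E_0$- and $E_2$-edges together with the $E_3$-edges) and build the fourth and fifth $1$-factors from that structure rather than by free extension of prescribed $2$-matchings. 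As written, the proof of $(ii)$ does not go through.
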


As far as we know, there are no partial results for Conjecture \ref{Fulkerson} (Berge-Fulkerson conjecture), besides the trivial ones that a possible minimum counterexample is cyclically 4-edge connected and it has girth at least 5.

\begin{theorem}
If $G$ is a possible minimum counterexample to Conjecture \ref{Fan_Raspaud} (Fan-Raspaud conjecture), then
\begin{itemize}
\item[$(i)$]
$\omega(G) \geq 4$ \cite{ms14}.
\item[$(ii)$]
$\mu_3(G) \geq 7$ \cite{s15}.
\item[$(iii)$]
$\chi_e(G) \geq 5$.
\end{itemize}
\end{theorem}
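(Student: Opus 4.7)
The plan is to handle the three items in turn, relying on the existing literature for $(i)$ and $(ii)$ and giving a short self-contained argument for $(iii)$. For $(i)$, I would cite the result of M\'a\v{c}ajov\'a and \v{S}koviera \cite{ms14}, which shows that a bridgeless cubic graph with $\omega(G)\le 2$ satisfies the Fan-Raspaud conjecture; recalling that $\omega(G)$ is always even (the lengths of the circuits of any 2-factor of $G$ sum to the even integer $|V(G)|$), a counterexample must therefore have $\omega(G)\ge 4$. For $(ii)$, I would appeal to the analysis of cores of bridgeless cubic graphs in Steffen \cite{s15}, which yields the bound $\mu_3(G)\ge 7$ for any counterexample.

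For $(iii)$, the goal is to prove that $\chi_e(G)\le 4$ implies the Fan-Raspaud conclusion for $G$. If $\chi_e(G)=3$, three pairwise disjoint $1$-factors exist and we are done, so assume $\chi_e(G)=4$ and let $M_1,M_2,M_3,M_4$ be $1$-factors with $M_1\cup M_2\cup M_3\cup M_4=E(G)$. The heart of the argument is a local pigeonhole computation at each vertex $v$: each $M_i$ contains exactly one edge incident to $v$, so the multiset of contributions of the $M_i$ at $v$ has cardinality $4$ distributed over the $3$ edges incident to $v$, and by the covering hypothesis each of these three edges is hit at least once. The only way to split $4$ into three positive parts is $2+1+1$, so at $v$ no edge lies in three or more of the $M_i$. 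Applying this at either endpoint of an arbitrary edge $e$ shows that $e$ belongs to at most two of $M_1,M_2,M_3,M_4$; consequently $M_{i_1}\cap M_{i_2}\cap M_{i_3}=\emptyset$ for every triple of distinct indices, and any such triple is the required Fan-Raspaud witness.

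The main obstacle is essentially bookkeeping: the pigeonhole argument in $(iii)$ is short and self-contained, while $(i)$ and $(ii)$ are contrapositive readings of the cited theorems and require only careful alignment of notation. The conceptually attractive feature is that any cover of $E(G)$ by four $1$-factors is so restricted locally, at every vertex, that each edge lies in at most two of them, which is precisely what makes Fan-Raspaud automatic in that regime and hence forces $\chi_e(G)\ge 5$ on any minimal counterexample.
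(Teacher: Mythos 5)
Your proposal is correct. For context: the paper states this theorem as a survey item and gives no proof at all; items $(i)$ and $(ii)$ are simply attributed to \cite{ms14} and \cite{s15}, exactly as you do (and your remark that $\omega(G)$ is even, so ``not $\le 2$'' upgrades to ``$\ge 4$'', is the right way to read \cite{ms14}). Item $(iii)$ carries no citation in the paper and no argument, so your pigeonhole proof supplies something the paper omits. The argument is sound: if $\chi'_e(G)=4$, then at each vertex the four perfect matchings contribute four incidences to the three incident edges, each edge receives at least one because the union is $E(G)$, the only partition of $4$ into three positive parts is $2+1+1$, hence no edge lies in three of the matchings and any three of them form a Fan--Raspaud triple; the case $\chi'_e(G)=3$ forces three pairwise disjoint $1$-factors by an edge count. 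Note that your argument actually proves the stronger statement that \emph{every} counterexample to Fan--Raspaud, minimal or not, has $\chi'_e\ge 5$, which is consistent with the paper's related remark that a bridgeless cubic graph with $\chi'_e(G)=4$ behaves in many respects like a $3$-edge-colorable one.
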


\begin{theorem}
If $G$ is a possible minimum counterexample to Conjecture \ref{5-cdcc} (5-cycle-double-cover conjecture), then
\begin{itemize}
\item[$(i)$]
$\omega'(G) \geq 6$ \cite{huck01}.
\item[$(ii)$]
$\chi_e(G) \geq 5$ \cite{s15}.
\end{itemize}
\end{theorem}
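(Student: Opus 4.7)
My plan is to prove both parts by contrapositive: assuming the respective parameter is small, I would construct a 5-cycle double cover (5-CDC) of $G$, contradicting the assumption that $G$ is a counterexample to Conjecture \ref{5-cdcc}. The overall strategy exploits the fact that, for cubic graphs, a 5-CDC can be built from a collection of 2-regular subgraphs whose characteristic vectors sum to $2\chi_{E(G)}$ over $\mathbb{Z}$, and that small uncolorability parameters provide enough structure to assemble such subgraphs.

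For part $(i)$, following Huck \cite{huck01}, I would take an even factor $F$ of $G$ realizing $\omega'(G)\le 4$, with odd components $C_1,\ldots,C_k$ (where $k\in\{0,2,4\}$ by a parity argument on the cubic graph). The case $k=0$ gives a proper $3$-edge-coloring and hence a trivial $3$-CDC. For $k\in\{2,4\}$, the plan is to pair the odd components and connect each pair by a trail through $E(G)\setminus E(F)$, then form auxiliary even subgraphs by taking symmetric differences with $F$; iterating with alternative choices of pairings and of $F$ produces five $2$-regular subgraphs that cover every edge exactly twice. The base case is the classical fact that every bridgeless cubic graph with $\omega(G)\le 2$ admits a $5$-CDC, to which the construction with $\omega'\le 4$ reduces after handling the additional isolated vertices.

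For part $(ii)$, following Steffen \cite{s15}, I would invoke Theorem~3.5 of Hou, Lai, and Zhang \cite{Hou_etal_2012}: $\chi'_e(G)\le 4$ if and only if $G$ admits a circuit cover $\mathcal{C}$ where each edge lies in one or two circuits and the doubly-covered edges form a $1$-factor $M$. The singly-covered edges then form the $2$-factor $F_M=G-M$; adding $F_M$ as an additional $2$-regular cycle produces a cycle double cover of $G$. The remaining task is to aggregate the circuits of $\mathcal{C}$ into exactly four edge-disjoint $2$-regular subgraphs, so that together with $F_M$ one obtains a $5$-CDC. This aggregation is driven by choosing $\mathcal{C}$ to be compatible with a covering by four perfect matchings $M_1,\ldots,M_4$ of $E(G)$: the four complementary $2$-factors $G-M_i$ supply a natural grouping of circuits, and one verifies that under this grouping each edge of $M$ appears in exactly two of the four aggregated subgraphs.

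The main obstacle in $(i)$ is the handling of isolated vertices of $F$: unlike the case of a genuine $2$-factor, their three incident edges must be distributed among the cycles of the CDC without creating new odd circuits, which requires a local adjustment at each such vertex and a careful global bookkeeping on how the augmenting trails interact with these stars. The main obstacle in $(ii)$ is showing that the aggregation of circuits into four $2$-regular subgraphs yields the correct multiplicities; in particular, the edges of $M$ (each lying in two circuits of $\mathcal{C}$) must end up in exactly two of the four aggregated cycles. This balance follows from the parity structure imposed by the four-matching cover, but establishing it rigorously requires a careful case analysis of how the circuits of $\mathcal{C}$ distribute across the matchings $M_1,\ldots,M_4$.
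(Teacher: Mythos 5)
The paper does not actually prove this theorem: it appears in the closing survey section as a summary of known partial results, with part $(i)$ cited to Huck \cite{huck01} and part $(ii)$ to Steffen \cite{s15} (resting on Hou, Lai, and Zhang \cite{Hou_etal_2012}). You are therefore attempting to reprove two substantial external theorems, and your sketch closes neither. For $(i)$, the entire difficulty of Huck's result is hidden in the sentence ``iterating with alternative choices of pairings and of $F$ produces five $2$-regular subgraphs that cover every edge exactly twice.'' Huck's proof that small (weak) oddness implies a $5$-CDC is a long and delicate case analysis built on the Huck--Kochol base case $\omega(G)\le 2$ \cite{hk95}; there is no routine reduction from an even factor with four odd components to that base case, and the trail-pairing/symmetric-difference device you describe is never shown to terminate with exactly five even subgraphs of the correct multiplicities. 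Explicitly deferring the ``handling of isolated vertices'' and the ``global bookkeeping'' as the main obstacle, without resolving them, leaves part $(i)$ essentially unstarted.

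For $(ii)$ the gap is sharper and the step as described would fail. Starting from the Hou--Lai--Zhang circuit cover $\mathcal{C}$ (each edge in one or two circuits, the doubly covered edges forming a $1$-factor $M$), adding the $2$-factor $G-M$ does yield a cycle double cover, but converting it into a $5$-CDC requires partitioning the circuits of $\mathcal{C}$ into four classes of pairwise vertex-disjoint circuits, i.e.\ properly $4$-colouring the intersection graph of $\mathcal{C}$ (each vertex of $G$ lies on exactly two circuits of $\mathcal{C}$ and forces them into different classes). Nothing in your argument guarantees such a $4$-colouring exists: the circuits of $\mathcal{C}$ are not in general subgraphs of the complementary $2$-factors $G-M_i$, so the ``natural grouping'' you invoke is not available as stated, and the sentence ``one verifies that under this grouping each edge of $M$ appears in exactly two of the four aggregated subgraphs'' is precisely the missing content rather than a verification. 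The published arguments build the five even subgraphs directly from algebraic combinations of the four perfect matchings rather than by aggregating a pre-existing circuit cover; without that, or a proof that the required partition of $\mathcal{C}$ exists, part $(ii)$ is not established.
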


\subsection{Conjectures and problems for bridgeless cubic class 2 graphs}

\subsubsection*{Petersen graph}

We start with some conjectures and problems which are related to the Petersen graph.

\begin{problem} [\cite{Hagglund_2012}]
\label{p_exc_index}
Is the Petersen graph the only cyclically 5-edge-connected snark with excessive index 5?
\end{problem}

The Petersen graph has circular flow number 5, see e.g.~\cite{s01_1}. All other
known snarks with circular flow number 5 have cyclic connectivity 4 (see Esperet, Mazzuoccolo, and Tarsi \cite{emt16}, and M\'a\v{c}ajov\'a and  Raspaud \cite{mr06}).

\begin{problem}
\label{p_circ_flow}
Is the Petersen graph the only cyclically 5-edge-connected snark with circular flow number 5?
\end{problem}

A bridgeless cubic class 2 graph $G$ is {\it vertex-irreducible}, if for any two vertices $x,y \in V(G)$, the graph $G-\{x,y\}$ cannot be extended to a bridgeless cubic class 2 graph by adding edges.
Notice that, according to the definitions in Section \ref{sec:defi}, if $G$ has two disjoint conflicting zones, then it cannot be neither edge- nor vertex-irreducible.

\begin{conjecture} [\cite{s98}] \label{conj:P_vertex_red}
The Petersen graph is the only vertex-irreducible bridgeless cubic class 2 graph.
\end{conjecture}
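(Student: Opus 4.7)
The plan is to attack the conjecture in three stages. First, exploit the fact that vertex-irreducibility is strictly stronger than edge-irreducibility (removing two adjacent vertices is a special case of the pair-removal operation). By Theorem \ref{charact_4_flow_critical}, this forces $G$ to be $4$-flow-critical and cyclically $4$-edge-connected, and for every edge $e=uv$ of $G$ there must exist a $2$-factor containing exactly two odd circuits joined by $e$. Moreover, as noted in Section \ref{sec:defi}, $G$ cannot contain two disjoint conflicting zones; otherwise one could pick a vertex in each zone, remove the pair, and complete to a smaller graph that still has a conflicting zone and is therefore class~$2$.

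Second, I would push the structural constraints as far as possible to bound girth and diameter. For the girth: if $G$ contains a $4$-cycle $xuyv$, the pair $\{x,y\}$ has the two common neighbors $u,v$, so its removal leaves $u,v$ of degree $1$ and two further degree-$2$ vertices, and the few possible completions should be ruled out using Theorem \ref{propo-f} together with the fact that $G$ itself is class~$2$. A similar but subtler analysis for $5$-cycles and longer short cycles should yield girth at least~$5$. For the diameter: if some pair $\{x,y\}$ is at distance $\ge 3$, then $G-\{x,y\}$ has six pairwise non-adjacent degree-$2$ vertices and a very rich family of cubic completions; the goal is to show, via a Boole-coloring/Parity-Lemma argument in the style of Subsection~\ref{Boole_colorings}, that at least one such completion must be class~$2$ and bridgeless (equivalently, that the resulting multipole is color-disjoint from its complement). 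Combining girth~$\ge 5$ with diameter~$\le 2$ and the Moore bound forces $|V(G)|\le 10$, and the Petersen graph is the unique cubic graph attaining equality.

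Third, I would verify directly that the Petersen graph $P$ is vertex-irreducible. Since $P$ is vertex-transitive and has diameter $2$, one only needs to check one adjacent pair and one non-adjacent pair. In the adjacent case, $P-\{x,y\}$ leaves four degree-$2$ vertices, and each of the three possible completions produces either a $3$-edge-colorable cubic graph or one containing a bridge or multi-edge. In the non-adjacent case, the unique common neighbor becomes a degree-$1$ vertex and the local structure forces every cubic completion to be $3$-edge-colorable.

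The main obstacle is the diameter step. For edge-irreducibility there is essentially a single reduction to analyze (the edge-deletion/suppression), and this is what powers Theorem~\ref{charact_4_flow_critical}; in contrast, pair-removal at distance $\ge 3$ requires inserting \emph{three} edges among six degree-$2$ vertices, giving many inequivalent completions, and one must either exhibit one successful completion uniformly across all non-Petersen snarks, or produce a universal obstruction that does so implicitly. A realistic approach is to couple the local Boole-coloring calculus of Fiol~\cite{f79,f91} with Kochol's superposition viewpoint of Section~\ref{sec:flows}, using the fact that $G$ has no two disjoint conflicting zones to localize the obstruction near $x$ and $y$, and then complement this structural step with computer-assisted verification for small orders (starting from the catalogue of Brinkmann, Goedgebeur, H\"agglund and Markstr\"om~\cite{Gunnar_2011_paper}) to serve as the base case of an induction on $|V(G)|$.
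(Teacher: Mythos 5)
The statement you are trying to prove is an open conjecture: the paper offers no proof of it, and records only two facts about it, namely that Steffen \cite{s98} proved every vertex-irreducible cubic graph is cyclically $5$-edge-connected, and that the conjecture has been verified by computer for all bridgeless cubic graphs on at most $36$ vertices \cite{Jan_2016}. So there is no proof in the paper to compare yours against, and your proposal must stand on its own. It does not: it is a research programme whose decisive step is missing, as you yourself concede.

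The genuine gap is the diameter bound. Everything in your argument funnels into the claim that a vertex-irreducible graph has diameter at most $2$, because only then does the Moore bound (together with girth $\ge 5$) force $|V(G)|\le 10$ and hence $G=P$. But to prove that claim you must show that for \emph{every} non-Petersen snark $G$ and \emph{some} pair $x,y$ at distance $\ge 3$, the $6$-pole $G-\{x,y\}$ admits a cubic, bridgeless, class~$2$ completion. This is essentially equivalent to controlling $\Col(M)$ for $6$-poles, which the paper explicitly flags as unresolved territory: the reduction value $v(6)$ is unknown (only $v(6)\ge 12$ is known \cite{kmn13}), and the Jaeger--Swart conjecture singles out $6$-edge-cuts as the critical open case. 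Invoking Boole colorings or superposition names a language in which one might attack the problem, but supplies no obstruction and no construction; "exhibit one successful completion uniformly across all non-Petersen snarks" is a restatement of the conjecture, not a step towards it. Secondary weaknesses compound this: the girth-$\ge 5$ step is asserted via "the few possible completions should be ruled out" without doing so (though this part is at least recoverable from the cyclic $5$-edge-connectivity result of \cite{s98}); the verification that $P$ itself is vertex-irreducible is asserted rather than carried out; and the proposed "computer-assisted base case" cannot anchor an induction because no inductive mechanism relating $G$ to a smaller vertex-irreducible graph is ever described. In short, the proposal correctly identifies where the difficulty lies but does not overcome it, and the statement remains a conjecture.
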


In \cite{s98} it is proved that a vertex-irreducible cubic graph is cyclically 5-edge connected.
Conjecture \ref{conj:P_vertex_red} is true for all cubic bridgeless graphs with at most 36 vertices \cite{Jan_2016}.

\subsubsection*{Bridgeless cubic class 2 graphs}

Conjectures on general properties of bridgeless cubic class 2 graphs are the following.

\begin{conjecture} [\cite{jsw80}]
If $G$ is a snark, then its cyclic connectivity is at most 6.
\end{conjecture}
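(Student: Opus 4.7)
The plan is to attempt a proof by contradiction using the multipole framework developed in Section~\ref{sec:defi}. Suppose $G$ is a snark of minimum order with cyclic edge-connectivity $k \geq 7$. By minimality, every edge-cut of size at most $6$ in $G$ is non-cycle-separating, so there is a cycle-separating edge-cut $C$ with $|C| = k$, splitting $G$ into two complementary cubic multipoles $M_1$ and $M_2$, each with $k$ semiedges and each containing a circuit. Since $G$ is a snark, the complementary multipole criterion forces $\Col(M_1) \cap \Col(M_2) = \emptyset$, and by the Parity Lemma both $\Col(M_i)$ live in the set of coloring vectors $(m_1,m_2,m_3)$ with $m_1 \equiv m_2 \equiv m_3 \equiv k \pmod 2$.

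The first step is to push the reducibility bounds $v(m)$ one step further. Extending the Cameron--Chetwynd--Watkins technique (which yields $v(5)=5$) and the Karab\'a\v{s}--M\'a\v{c}ajov\'a--Nedela analysis of $v(6)$, I would attempt to classify, up to color-equivalence, all irreducible $7$-poles. Then the minimality of $G$ would force both $M_1$ and $M_2$ to have order at most $v(7)$, giving an absolute upper bound on $|V(G)|$. The goal would be to enumerate the finitely many resulting candidate graphs and verify, case by case, that none is in fact a snark---the contradiction sought.

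A complementary angle is to exploit the flow results. By Steffen's theorem \cite{s10}, cyclic edge-connectivity $k \geq \tfrac{5}{2}\omega(G) - 3$ implies a nowhere-zero 5-flow, and a sharper version aimed at Tait-colorability rather than 5-flows would immediately yield the conclusion. The plan is to replace the 5-flow target by the 4-flow target (equivalently, 3-edge-colorability via Theorem~\ref{Tutte_character}$(ii)$), and try to show that when $k \geq 7$ the Klein-group flow values on any 2-factor with few odd circuits can be globally adjusted into a nowhere-zero $\mathbb{K}$-flow.

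The main obstacle is that $v(m)$ is unknown already for $m = 6$, and the combinatorial explosion of coloring patterns on $k$ semiedges makes a direct enumerative attack hopeless as $k$ grows. Moreover, Kochol's superposition method has produced snarks with arbitrarily large girth, and adapting superposition to control cyclic edge-connectivity rather than girth is plausible; this strongly suggests that the conjecture may in fact be \emph{false} for sufficiently large $k$, and that the real work should be devoted to engineering a superposed family of cyclically $k$-edge-connected snarks with $k \geq 7$ via carefully chosen irreducible multisets in place of a proof. Absent such a construction, the irreducibility route via sharper $v(m)$ bounds appears to be the only realistic path to a positive result.
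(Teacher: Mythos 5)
This statement is the Jaeger--Swart conjecture, which the paper records as an \emph{open conjecture}; the paper contains no proof of it, and your text does not contain one either. What you have written is a research programme, and each of its branches is, by your own admission, blocked. The reducibility route needs the value $v(7)$ (or at least a finite upper bound for it together with a classification of irreducible $7$-poles), but $v(m)$ is unknown already for $m=6$; moreover, even granting a bound $|V(M_i)|\le v(7)$, your minimality argument does not close: reducing $M_i$ to a smaller multipole $N_i$ with $\Col(N_i)\subseteq\Col(M_i)$ yields a smaller class 2 graph, but nothing guarantees that this smaller graph is again cyclically $7$-edge-connected (or even a snark in the strong sense), so it need not contradict the choice of $G$ as a minimum counterexample. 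The flow route asks for a strengthening of Steffen's theorem from the 5-flow conclusion to a 4-flow (i.e.\ Tait-colorability) conclusion; no such strengthening is known, and you give no argument for the ``global adjustment'' of Klein-group flow values that it would require.

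Your final paragraph effectively concedes all of this and pivots to speculating that the conjecture is false, proposing to build cyclically $k$-edge-connected snarks for $k\ge 7$ by superposition. That is a legitimate research direction (and indeed the reason the problem is hard: superposition controls girth well but cyclic connectivity poorly), but it is the opposite of a proof of the statement. In short: there is no proof here to compare with the paper's, because neither you nor the paper proves the statement; the honest conclusion is that the conjecture remains open, and your text should be presented as a discussion of possible attacks rather than as a proof attempt.
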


The following problem relates problems \ref{p_exc_index} and \ref{p_circ_flow} to each other

\begin{problem}[\cite{aklm16}]
Let $G$ be a bridgeless cubic class 2 graph. Is it true that if $\chi_e(G) = 5$, then $G$ has circular flow number 5?
\end{problem}

We propose the following conjecture:

\begin{conjecture}
There is $\epsilon > 0$, such that $\chi_e(G) \leq 4$ for every cubic graph $G$ with circular flow number smaller than $4 + \epsilon$.
\end{conjecture}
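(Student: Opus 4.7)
The strategy is to interpolate between circular flow numbers and perfect matching covers through an intermediate structural parameter, most naturally the flow resistance $r_f(G)$ introduced in the present paper. The guiding intuition is that by Lukot'ka--\v{S}koviera's theorem the circular flow numbers of cubic graphs lie in $\{3\}\cup[4,5]$, so values just above $4$ should reflect graphs whose obstruction to $3$-edge-colorability is genuinely small. The target is therefore a chain of implications of the form
$F_c(G)\le 4+\epsilon \;\Rightarrow\; r_f(G)\le K \;\Rightarrow\; \chi_e(G)\le 4$
for appropriate constants $\epsilon,K$, independent of $G$.

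The first step would be to prove a quantitative bound $r_f(G)\le g(F_c(G)-4)$ for some function $g$ with $g(0)=0$. Given a nowhere-zero $(4+\delta)$-flow $(D,\phi)$, I would write each flow value as an integer in $\{\pm 1,\pm 2,\pm 3\}$ plus a small perturbation and then attempt to absorb the perturbations along a minimal spanning structure (a $T$-join across the set of vertices with violated Kirchhoff condition after rounding). The hope is that the edges whose flow cannot be rounded consistently form a set whose size is controlled by $\delta$; such edges would be deleted to produce a $4$-flow, witnessing a small value of $r_f(G)$. The discreteness of admissible circular flow numbers on cubic graphs, together with the fact that the infimum in the definition of $F_c$ is attained at a rational number (Goddyn--Tarsi--Zhang), should help confine the rounding error.

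The second step would then prove that small flow resistance forces $\chi_e(G)\le 4$. A nowhere-zero $4$-flow on $G$ minus a small edge set decomposes, via the Klein four group interpretation discussed in Subsection~\ref{Boole_colorings}, into two nearly disjoint $1$-factors; augmenting these with two further $1$-factors chosen to cover the residual edges should yield the desired cover. A reasonable intermediate target, already nontrivial, is to show that $r_f(G)\le 1$ implies $\chi_e(G)\le 4$, which would complement Proposition~\ref{bound_r_f} in the opposite direction.

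The main obstacle lies in the first step: existing results point the other way, for instance the theorem in \cite{s14} that $\gamma_2(G)\le 2$ together with cyclic $6$-edge-connectivity implies $F_c(G)\le 5$. Moreover, the Petersen graph shows that conditions like small oddness alone cannot bound $\chi_e$ from above, so the argument must genuinely exploit the near-integer flow hypothesis rather than any of its immediate structural consequences. Empirically, as noted after Problem~\ref{p_circ_flow} and in \cite{aklm16}, every known snark with $\chi_e=5$ has $F_c\ge 5$, which at least suggests that the conjectured $\epsilon$ may be taken as large as $1$; this is where I would focus computational experiments first to calibrate the hoped-for function $g$.
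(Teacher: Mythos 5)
This statement is one of the paper's own open conjectures; the paper offers no proof of it, so there is nothing to compare your argument against except its correctness on its own terms. What you have written is a research plan rather than a proof, and unfortunately its central reduction is already refuted by data stated in the paper. Your chain is $F_c(G)\le 4+\epsilon \Rightarrow r_f(G)\le K \Rightarrow \chi_e(G)\le 4$, and your ``reasonable intermediate target'' is that $r_f(G)\le 1$ implies $\chi_e(G)\le 4$. The Petersen graph $P$ kills this: the paper records $r_f(P)=\gamma_2(P)=1$, while $\chi_e(P)=5$ (the union of any four $1$-factors of $P$ misses an edge). Hence no implication of the form $r_f(G)\le K \Rightarrow \chi_e(G)\le 4$ can hold for any $K\ge 1$, and $K=0$ is useless because $r_f(G)=0$ already means $G$ has a nowhere-zero $4$-flow, i.e.\ $F_c(G)\le 4$, which reduces the conjecture to its trivial boundary case. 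So the intermediate parameter $r_f$ discards exactly the information the conjecture is about; you even remark yourself that the argument ``must genuinely exploit the near-integer flow hypothesis rather than any of its immediate structural consequences,'' but the proposed factorization through $r_f$ does precisely the opposite.

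The first step is also unsupported. There is no known quantitative bound $r_f(G)\le g(F_c(G)-4)$ with $g(0)=0$, and your rounding/$T$-join sketch does not address the main difficulty: a nowhere-zero $(4+\delta)$-flow need not be close, in any edge-wise sense, to an integer $4$-flow, and the set of edges where rounding fails is not obviously controlled by $\delta$ uniformly over all cubic graphs. (Known results in this direction, e.g.\ the structure theory of graphs with $F_c\ge 5$ in \cite{emt16}, suggest that relating $F_c$ near $4$ to colorability measures is itself a hard open problem.) If you want to pursue the conjecture, the empirical observation you cite from \cite{aklm16} --- that all known snarks with $\chi_e=5$ have $F_c\ge 5$ --- is the right starting point, but a proof will need an intermediate invariant that the Petersen graph does not already show to be too weak.
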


\subsubsection*{Measures of edge-uncolorability}

We start with three specific problems of Lukot'ka and Maz\'{a}k \cite{LM_2016}.

\begin{problem} [\cite{LM_2016}]
\label{conjec-oddness-res}
Does there exist a cubic graph with weak oddness 4 and oddness at least 6?
\end{problem}

\begin{problem} [\cite{LM_2016}]
For which integers $k \geq 3$ does there exist a cyclically $k$-edge-connected cubic class 2 graph $G$ with $\omega(G) \not= \omega'(G)$.
\end{problem}

\begin{problem} [\cite{LM_2016}]
In a 3-edge-connected cubic class 2 graph, can the expansion of a vertex into a triangle decrease the oddness?
\end{problem}

A graph $G$ is {\em hypohamiltonian} if it is not hamiltonian but $G-v$ is hamiltonian for every vertex $v \in V(G)$.
For instance, the Petersen graph and the flower snarks are hypohamiltonian. M\'a\v{c}ajov\'a and \v{S}koviera \cite{M_Skoviera_2007} showed that
cubic hypohamiltonian class 2 graphs are cyclically 4-edge-connected and have girth at least 5 (i.e.~they are snarks),
and that there are cyclically 6-edge-connected hypohamiltonian snarks with girth 6.
If $G$ is a hypohamiltonian snark, then $r(G) = \omega(G) = 2$, and $G$ satisfies Conjecture \ref{Fan_Raspaud}.
If the following conjecture is true, then hypohamiltonian snarks have excessive index 5, i.e.~they satisfy Conjecture \ref{Berge_Conjecture}.

\begin{conjecture} [\cite{s15}]
\label{hypo}
Let $G$ be a cubic class 2 graph. If $G$ is hypohamiltonian, then $\mu_3(G) = 3$.
\end{conjecture}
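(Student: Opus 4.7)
Since every cubic class~$2$ graph $G$ satisfies $\mu_3(G)\ge 3$, the task is to produce three $1$-factors $M_1,M_2,M_3$ of $G$ whose union misses at most three edges; equivalently, to drive $|E_2|+2|E_3|$ down to $3$. My plan combines two structural inputs: hypohamiltonicity (so $G-v$ has a Hamiltonian cycle for every vertex $v$, and by M\'a\v{c}ajov\'a--\v{S}koviera $G$ has girth at least $5$), and the quoted fact $\omega(G)=r(G)=2$.

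The first step is a base construction from a Hamiltonian cycle. Pick any vertex $v$ with neighbours $x_1,x_2,x_3$ and a Hamiltonian cycle $H$ of $G-v$; since $G$ has girth $\ge 5$ and is class~$2$, no two $x_i$ are consecutive on the odd cycle $H$ of length $n-1$. Let $N_{x_i}$ be the unique near-perfect matching of $H$ missing $x_i$ (the alternate edges of the odd cycle starting one step from $x_i$) and set $M_i = N_{x_i}\cup\{vx_i\}$. A direct check, which I verified in detail for $G=P$, shows that each $M_i$ is a $1$-factor of $G$, the three edges $vx_j$ are each in exactly one $M_i$, and the uncovered set equals precisely the chord matching $M_0$ of $H$ in $G-v$, of size $(n-4)/2$. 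For the Petersen graph $|M_0|=3$ and the proof is already complete.

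For hypohamiltonian snarks with $n>10$ (for instance the flower snarks $J_k$, $k\ge 5$), the base triple leaves too many chords uncovered, and the real work is to absorb all but three of them into the $M_i$. Given an uncovered chord $c=pq$ whose endpoints are matched in $M_i$ by the $H$-edges $pp^\ast,qq^\ast$, the local swap $M_i\leftarrow (M_i\setminus\{pp^\ast,qq^\ast\})\cup\{pq,p^\ast q^\ast\}$ preserves the $1$-factor property whenever $p^\ast q^\ast\in E(G)$; when this is blocked one uses an alternating-path augmentation in $(G-v,M_i)$, and hypohamiltonicity supplies alternative Hamiltonian cycles of $G-v$ along which to reroute. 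I would synchronise this absorption with the second witness, a proper $4$-edge-colouring of $G$ with uncoloured class $\{f_1,f_2\}$ realising $r(G)=2$. A Klein four-group parity computation forces the missing colours at the four endpoints of $f_1,f_2$ into a $2+2$ ``cross'' pattern (the all-equal and within-$f_i$ alternatives give respectively a $3$-edge-colouring of $G$ or $r(G)\le 1$, both absurd), so the third colour class is automatically a $1$-factor of $G$ and pins one of the $M_i$ down.

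The main obstacle is to show that the combined procedure terminates at exactly $|E_0|=3$ rather than undershooting. Strict undershoot is forbidden by $\mu_3(G)\ge 3$, and this inequality is precisely the termination certificate: any absorption step that would drive $|E_0|$ below $3$ must be obstructed, and the form of the obstruction can be fed back into the case analysis. I expect the technical heart to be a finite case analysis on how $\{vx_1,vx_2,vx_3\}$ meets the two odd cycles of a $2$-factor of $G$ with $\omega=2$ (guaranteed by $\omega(G)=2$), combined with the freedom provided by hypohamiltonicity to reroute $H$ whenever a swap or augmentation is blocked.
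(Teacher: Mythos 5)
This statement is an open conjecture, not a theorem of the paper: it is attributed to Steffen \cite{s15}, and the paper's only evidence for it is the computational verification for all hypohamiltonian class~2 graphs with at most 36 vertices by Goedgebeur and Zamfirescu \cite{Jan_Carol_2016}. There is no proof in the paper to compare against, so the only question is whether your argument closes the conjecture. It does not: it is a programme with the decisive steps left open, and you say so yourself (``the real work is to absorb all but three of them,'' ``I expect the technical heart to be a finite case analysis''). The central gap is termination of the absorption procedure at $|E_0|=3$. The inequality $\mu_3(G)\ge 3$ only forbids undershooting; it gives no guarantee that the swaps and alternating-path augmentations can always be continued until only three edges remain uncovered, and ``the form of the obstruction can be fed back into the case analysis'' is not an argument --- it is precisely the missing proof. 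Hypohamiltonicity supplies many Hamiltonian cycles of the various $G-v$, but you give no mechanism for choosing among them, and no invariant that decreases.

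There is also a concrete error earlier in the construction. For the base triple $M_i=N_{x_i}\cup\{vx_i\}$ you claim the uncovered set is exactly the set of chords of $H$. But whether $N_{x_1}\cup N_{x_2}\cup N_{x_3}$ covers every edge of the odd cycle $H$ depends on the positions of $x_1,x_2,x_3$ on $H$: an edge $e$ of $H$ lies in $N_{x}$ if and only if the distance from $x$ to $e$ (in a fixed orientation) has the right parity, so for $n>10$ an $H$-edge can perfectly well be missed by all three near-perfect matchings. Your verification for the Petersen graph does not propagate. Finally, the Klein-group parity argument attached to the $4$-edge-colouring with $r(G)=2$ produces one $1$-factor (the third colour class), but you never explain how ``pinning one of the $M_i$ down'' interacts with, or constrains, the absorption process on the other two. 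In short: the conjecture remains open, and this proposal does not settle it.
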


Conjecture \ref{hypo} is verified for all hypohamiltonian class 2 graphs with at most 36 vertices by Goedgebeur and Zamfirescu \cite{Jan_Carol_2016}.

\begin{conjecture}
\label{conj:flow_resistance}
If $G$ is a bridgeless cubic graph, then $r_f(G) \leq r(G)$.
\end{conjecture}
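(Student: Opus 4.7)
The plan is to translate a minimum proper $4$-edge-coloring of $G$ into a $4$-flow via the standard $\mathbb{Z}_2\times\mathbb{Z}_2$-decomposition, using that a $4$-flow on $G$ is exactly a pair $(E_1,E_2)$ of even subgraphs of $G$ whose support is $E_1\cup E_2$. Thus $r_f(G)\le r(G)$ will follow once I produce even subgraphs $E_1,E_2\subseteq E(G)$ with $E_1\cup E_2\supseteq M_1\cup M_2\cup M_3$, where $M_k=c^{-1}(k)$ are the classes of some minimum proper $4$-edge-coloring $c$ of $G$ with $|M_0|=r(G)$. For every vertex $v$, denote by $\mathrm{miss}(v)$ the unique colour not present at $v$ and put $V_k=\{v:\mathrm{miss}(v)=k\}$; since each $M_k$ is a matching, $|V_k|=|V(G)|-2|M_k|$ is even.

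First I would take the natural candidates $E_1^0:=M_1\cup M_2$ and $E_2^0:=M_1\cup M_3$, whose union is already $E(G)\setminus M_0$. A short degree count at each $v\in V_k$ shows that the odd-degree set of $E_1^0$ is $T_1:=V_1\cup V_2$ and that of $E_2^0$ is $T_2:=V_1\cup V_3$, both of even cardinality, so the bridgeless graph $G$ contains a $T_1$- and a $T_2$-join. The crucial observation is that if the joins $J_i$ can be chosen inside $M_0$, then $E_i:=E_i^0\cup J_i$ is a disjoint union, it is even by construction, and it still contains $M_1\cup M_2\cup M_3$, providing the desired pair. Since $M_0$ is a matching on $U=V_1\cup V_2\cup V_3$, such $J_1,J_2\subseteq M_0$ exist simultaneously iff every $M_0$-edge $uv$ satisfies the \emph{compatibility condition} $\mathrm{miss}(u)=\mathrm{miss}(v)$.

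The main step, and what I expect to be the principal obstacle, is to show that some minimum $4$-edge-coloring of $G$ can be chosen compatible in this sense. The plan is a Kempe-chain normalisation: an incompatible edge $uv\in M_0$ with $\mathrm{miss}(u)=a\ne b=\mathrm{miss}(v)$ is handled either by swapping colours along the $(a,b)$-Kempe chain rooted at $u$ -- which forces $\mathrm{miss}(u)=b$ whenever this chain avoids $v$ -- or, when the $(a,b)$-chain from $u$ happens to end at $v$, by combining an $(a,c)$-swap at $u$ with a $(b,c)$-swap at $v$, where $c$ is the third element of $\{1,2,3\}$; both missing colours are then pushed to $c$. Each operation preserves $|M_0|$, and a suitable potential (for instance the number of incompatible $M_0$-edges, refined lexicographically by $\sum_k|V_k|^2$) should strictly decrease, so that the process terminates in a compatible coloring and completes the proof.

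Should the Kempe normalisation fail to terminate in some degenerate configurations, the fallback is to allow $J_1$ and $J_2$ to borrow a few edges of $M_1\cup M_2\cup M_3$ selected along these same Kempe chains and use the XOR correction $E_i:=E_i^0\triangle J_i$; some edges of $M_1\cup M_2\cup M_3$ then drop out of $E_1\cup E_2$, but the resulting loss can be charged to the set of incompatible $M_0$-edges along bounded-length chains. Controlling this exchange tightly so that the total number of edges outside $E_1\cup E_2$ stays at most $|M_0|=r(G)$ is the main technical difficulty of this fallback route.
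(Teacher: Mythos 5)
First, a point of context: in the paper this statement is not a theorem but Conjecture~\ref{conj:flow_resistance}, posed by the authors as an open problem; there is no proof in the paper to compare against, so your argument would have to stand entirely on its own. It does not, because the main route is self-defeating. Your reduction of $r_f(G)\le r(G)$ to finding two even subgraphs $E_1,E_2$ with $E_1\cup E_2\supseteq M_1\cup M_2\cup M_3$ is fine, and your parity computation (odd-degree sets $V_1\cup V_2$ and $V_1\cup V_3$) is correct. The fatal problem is the ``compatibility condition'': you correctly observe that $J_1,J_2\subseteq M_0$ exist iff every edge $uv\in M_0$ satisfies $\mathrm{miss}(u)=\mathrm{miss}(v)$, and you then propose to normalise a \emph{minimum} proper $4$-edge-coloring into this form by Kempe chains. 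But if $\mathrm{miss}(u)=\mathrm{miss}(v)=k\in\{1,2,3\}$ for an edge $uv\in M_0$, then $uv$ can simply be recoloured with colour $k$, producing a proper $4$-edge-coloring with a strictly smaller colour class $0$. Hence no coloring with $|M_0|=r(G)>0$ can contain even a single compatible $M_0$-edge, let alone consist entirely of them; this is exactly the normalisation fact (Fiol \cite{f95}, Steffen \cite{s98}) that in a minimal coloring every uncoloured edge joins two vertices missing \emph{different} colours. Your Kempe-chain process is therefore trying to reach a configuration that provably does not exist for any class 2 graph, and no choice of potential function can rescue it.

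That leaves only your fallback, in which $J_1,J_2$ are general $T$-joins and $E_i:=E_i^0\triangle J_i$, so that edges of $M_1\cup M_2\cup M_3$ may fall out of $E_1\cup E_2$. This is indeed the only viable shape of the argument, but it is precisely where the whole difficulty of the conjecture lives: you must bound the number of dropped edges by $|M_0|$, and you explicitly leave that step open. As written, the proposal proves nothing beyond what is already in the paper; note that the unconditional bound currently known is Proposition~\ref{bound_r_f}, $r_f(G)\le\gamma_2(G)$, obtained by a different (perfect-matching) construction, and $\gamma_2(G)$ is not known to be at most $r(G)$ in general.
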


Let ${\cal{S}}$ be the set of snarks, and $\tau$ be one of the complexity measures which are discussed in this paper. Let
$$
q(\tau,k) = \max \left\{\frac{k}{|V(G)|} : G \in {\cal{S}} \text{ and } \tau(G) \geq k \right\}.
$$

\begin{problem}
Let $k \geq 1$ be an integer. Determine $q(\tau,k)$.
\end{problem}

\begin{problem}
What is the larges value $c$ such that $q(\tau,k) \geq c$ for all $k > 0$?
\end{problem}

These two questions were asked by H\"agglund \cite{Hagglund_2012} for the oddness.
Clearly, for the oddness it suffices to consider even numbers.
The Petersen graph is the smallest snark.
Hence, $q(r_f,1) = q(\gamma_2,1) = \frac{1}{10}$ and $q(r,2) = q(\omega,2) = q(\omega',2) = \frac{1}{5}$,
and $q(\mu_3,3) = \frac{3}{10}$.

The smallest snark with oddness 4 has at least 38 vertices (see Brinkmann, Goedgebeur, H\"agglund, and Markstr\"om \cite{Gunnar_2011_paper}) and at most 44 vertices (see Lukot'ka, M\'{a}\v{c}ajov\'{a}, Maz\'{a}k, and \v{S}koviera \cite{lmms_13}).
Hence, $\frac{2}{19} \leq q(\omega,4) \leq \frac{1}{11}$. H\"agglund \cite{Hagglund_2012} proved
that $q(\omega,k) \geq \frac{1}{18}$ and for multiples of 6 he improved this result to $q(\omega,6k) \geq \frac{1}{15}$.

In \cite{lmms_13} the reciprocal parameter
oddness ratio $|V(G)|/\omega(G)$ and the resistance ratio $|V(G)|/r(G)$ are also studied.
They adopt the asymptotical approach of Steffen \cite{s04} for a sophisticated analysis of these parameters in the case of cyclically $k$-edge connected snarks,
$k \in \{2,\dots,6\}$. More precisely, they study the parameters
$$
A_{\omega}=\lim_{|V(G)| \rightarrow \infty} \inf \frac{|V(G)|}{\omega(G)}\qquad \mbox{and}\qquad
A_{r} = \lim_{|V(G)| \rightarrow \infty} \inf \frac{|V(G)|}{r(G)}.
$$
%xxxx
%include table of \cite{lmms_13}
%xxxx
\begin{table}
\begin{center}
\begin{tabular}{|c||c|c|}
\hline
connectivity & lower bound & current upper bound\\
\hline \hline
2 &  5.41 & 7.5\\
\hline
3 & 5.52 & 9\\
\hline
4 & 5.52 & 13\\
\hline
5 & 5.83 & 25\\
\hline
6 & 7 & 99\\
\hline
\end{tabular}
\caption{Lower and upper bounds for the oddness ratio \cite{lmms_13}}
\end{center}
\end{table}

\begin{conjecture} [\cite{lmms_13}]
\label{conj:lmms_13_44}
Let $G$ be a snark. If $\omega(G) = 4$, then $|V(G)| \geq 44$.
\end{conjecture}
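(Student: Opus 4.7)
The plan is to close the gap between the currently known lower bound $|V(G)| \geq 38$ of Brinkmann, Goedgebeur, H\"agglund, and Markstr\"om \cite{Gunnar_2011_paper} and the conjectured $44$, by ruling out snarks with oddness $4$ on $n \in \{38,40,42\}$ vertices. The first step is to extract structural restrictions. Since $\omega(G)=4$, Theorem 2.1(iv)--(v) forces $d(G)=r(G)\geq 3$, while $r(G) \leq \omega(G) = 4$ by definition, so $r(G)\in\{3,4\}$. By Theorem 3.4, if $r(G)=3$ then $\omega'(G)=4$, and in the case $r(G)=4$ one has $\omega'(G)\leq \omega(G) = 4$, so in every case there is an even factor $F$ with exactly four odd components $C_1,\dots,C_4$, each of length at least $5$ (by the snark definition), contributing at least $20$ vertices. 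The complementary $1$-factor $M=E(G)\setminus E(F)$ is strongly constrained by the parity lemma applied to every edge-cut separating the $C_i$.

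Next I would exploit cyclic connectivity. A minimum counterexample may be assumed cyclically $4$-edge-connected; I would check whether the standard reductions across cycle-separating $4$-edge-cuts can be carried out while preserving $\omega = 4$, which would reduce the analysis to cyclically $5$-edge-connected snarks. Combining this with the asymptotic oddness-ratio bounds of Lukot'ka, M\'{a}\v{c}ajov\'{a}, Maz\'{a}k, and \v{S}koviera \cite{lmms_13} already forces a reasonable lower bound in each connectivity class. For the finer count I would argue via Kempe-chain switches: given $r(G) \in \{3,4\}$ together with a minimal proper $4$-edge-coloring as in the proof of Theorem 3.4, any pair of $(i,j)$-chains connecting uncolored edges and forming a disjoint union yields a $3$-edge-coloring after recoloring, contradiction; tracing the forced intersections among chains and the forced incidences on the $C_i$ should yield a quantitative lower bound on the number of edges in $M$, hence on $|V(G)|$.

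The final ingredient is computational verification. Using the snark-generation pipeline of \cite{Gunnar_2011_paper}, one would enumerate cyclically $4$-edge-connected cubic graphs on $38$, $40$, and $42$ vertices of girth at least $5$ and test $\omega(G)$. The structural pruning from the first two paragraphs, in particular the requirement $r(G)\in\{3,4\}$ and the forced distribution of $M$ among the $C_i$, should cut the search space enough to make $n=40$ and $n=42$ tractable. The main obstacle is precisely this combinatorial explosion: the naive enumeration for $n=42$ is well beyond current capacity, so a successful attack hinges on sharpening the structural step to the point where only a very restricted family of candidates remains. A fully structural proof, avoiding enumeration, would require a statement of the form ``every cyclically $4$-edge-connected snark of girth at least $5$ with $\omega(G)=4$ has $|V(G)|\geq 44$'', and in my view this is the hardest step, since no general lower bound linear in $\omega$ with leading constant larger than $10$ is presently known.
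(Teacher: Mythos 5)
The statement you were asked about is not a theorem of the paper: it is an open conjecture of Lukot'ka, M\'{a}\v{c}ajov\'{a}, Maz\'{a}k, and \v{S}koviera, reproduced here without proof. The paper only records that the bound $44$ would be sharp (a snark with $44$ vertices and oddness $4$ is constructed in \cite{lmms_13}) and that the current known lower bound is $38$ \cite{Gunnar_2011_paper}. So there is no proof in the paper to compare yours against, and your text is not a proof either --- it is a research programme whose two decisive steps you yourself flag as not carried out: the exhaustive test of oddness on cyclically $4$-edge-connected girth-$5$ cubic graphs with $42$ vertices (which you concede is beyond current computational reach), and the purely structural lower bound ``$\omega(G)=4$ implies $|V(G)|\geq 44$'' (for which, as you note, no linear bound in $\omega$ with leading constant above $10$ is known --- indeed Theorem 2.2$(iv)$ of the paper only gives $|V(S)|\geq 10\lfloor(d+1)/2\rfloor$, i.e.\ roughly $20$ vertices here).

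Two smaller points in the part you do argue. First, your deduction $r(G)\in\{3,4\}$ from Theorem 2.1$(iv)$--$(v)$ and $r(G)\leq\omega(G)$ is fine, but it buys very little: it localizes $G$ among graphs with resistance $3$ or $4$, and the paper itself exhibits (Theorem 3.6) a graph with $r=4$ and $\omega=\omega'=6$, showing how loosely these parameters control each other. Second, when you pass from the $2$-factor to an even factor $F$ with four odd components and claim each has length at least $5$, note that odd components of an \emph{even factor} may be isolated vertices (the paper's definition of weak oddness explicitly allows degree-$0$ vertices), so the ``at least $20$ vertices'' count only stands for the $2$-factor case --- and in any event $20$ is nowhere near $44$. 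The conjecture remains open, and your proposal does not close it.
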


If Conjecture \ref{conj:lmms_13_44} is true, then it is best possible. Lukot'ka, M\'{a}\v{c}ajov\'{a}, Maz\'{a}k, and \v{S}koviera
\cite{lmms_13} constructed a snark with 44 vertices and oddness 4.

\begin{conjecture} [\cite{lmms_13}]
If $G$ is a snark, then $|V(G)| \geq \left(7+\frac{1}{2}\right)\omega(G)$.
\end{conjecture}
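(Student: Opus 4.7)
The plan is to analyse a minimum 2-factor of $G$ and to carry out a global discharging argument that assigns at least $7\frac{1}{2}$ vertices of $V(G)$ to each of the $\omega(G)$ odd circuits. Let $F$ be a 2-factor of $G$ with odd circuits $C_1,\dots,C_\omega$ and even circuits $D_1,\dots,D_\ell$, and let $M=E(G)\setminus E(F)$ be the complementary perfect matching. Because $G$ is a snark the girth is at least $5$, so $|V(C_i)|\ge 5$ for every $i$ and the trivial bound is already $|V(G)|\ge 5\,\omega(G)$. The conjecture demands the extra $2.5$ vertices per odd circuit on average; this surplus must come from even circuits, from odd circuits of length exceeding $5$, and from the rigidity imposed by cyclic $4$-edge-connectivity together with the class-$2$ hypothesis.

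First I would establish a local rigidity lemma for short odd circuits. If $C_i$ has length exactly $5$ and is connected to a single other circuit $C'$ of $F$ by three or more edges of $M$, then cutting those matching edges yields a small multipole to which the Parity Lemma (Lemma \ref{parity lemma}) applies; combined with the prohibition on cycle-separating $2$- and $3$-edge-cuts coming from cyclic $4$-edge-connectivity of a snark, one deduces that the external structure must either produce a conflict contradicting class $2$ or contain a forbidden small cut. The conclusion is that the five $M$-edges at a short $5$-circuit must spread over several distinct circuits of $F$, so every such $C_i$ has a substantial neighbourhood in $F$.

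Next I would convert this local spread into a global vertex surplus via a fractional discharging scheme. Form the auxiliary multigraph $H$ on $\{C_1,\dots,C_\omega,D_1,\dots,D_\ell\}$ whose multi-edges are the edges of $M$. Assign initial charge $|V(C_i)|$ to each odd circuit and $|V(D_j)|$ to each even one; then discharge the vertices of every $D_j$ evenly among the odd circuits adjacent to $D_j$ in $H$, and transfer excess mass from any odd circuit of length $\ge 7$ to its short neighbours. Using the local rigidity lemma to bound below the neighbourhood size of every $5$-circuit, together with a multiplicity-bounding lemma between pairs of circuits, one arranges that every $C_i$ ends with final charge at least $7\frac{1}{2}$; summing over $i$ then gives $|V(G)|\ge 7\frac{1}{2}\,\omega(G)$.

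The main obstacle is twofold. First, the Petersen graph has $\omega(P)=2$ and $|V(P)|=10=5\,\omega(P)$, so the stated bound literally fails for $P$; the proof must therefore interpret the conjecture either as holding for snarks other than $P$, or only once $\omega(G)\ge 4$, with the finitely many small exceptions verified against the catalogues of \cite{Gunnar_2011_paper} and \cite{lmms_13}. Second, the paper's table of lower bounds on $A_\omega$ gives values of only $5.41$--$5.83$ depending on cyclic connectivity, and sharpening the slope all the way to $7\frac{1}{2}$ is precisely the quantitative gap that has resisted prior approaches; the discharging argument therefore has no option but to exploit class-$2$ rigidity globally, since girth-plus-connectivity reasoning alone cannot push the constant much above $6$. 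Locating the right global structural lemma to justify the factor $7\frac{1}{2}$ is the step where the plan is most likely to fail.
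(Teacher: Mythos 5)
This statement is a conjecture quoted from \cite{lmms_13}; the paper offers no proof of it, so there is nothing of the paper's to compare your argument against --- it is an open problem. Your text is in any case a research plan rather than a proof: the two ingredients that would carry all the weight, the ``local rigidity lemma'' for $5$-circuits and the ``multiplicity-bounding lemma'' between pairs of circuits of the $2$-factor, are only named, never proved, and you concede yourself that the step justifying the constant $7+\frac{1}{2}$ ``is most likely to fail.'' The best rigorous bounds of this kind in \cite{lmms_13} (reproduced in the paper's table on the oddness ratio) give slopes between $5.41$ and $7$ depending on cyclic connectivity; closing the gap to $7+\frac{1}{2}$ is exactly the open content of the conjecture, and a discharging outline that does not exhibit the structural lemma forcing the extra charge onto each odd circuit does not advance on it. Note also that the ``trivial bound'' $|V(G)|\ge 5\,\omega(G)$ only counts vertices lying on odd circuits, so the surplus you need must indeed come from somewhere, but nothing in the sketch quantifies it.

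A separate, concrete defect is the one you flag only in passing: the Petersen graph has $|V(P)|=10$ and $\omega(P)=2$, and $10<15=\left(7+\frac{1}{2}\right)\omega(P)$, so the inequality as literally stated fails for $P$, which is a snark under the survey's strong definition. Hence no proof of the literal statement can exist; the intended reading (asymptotic in $|V(G)|$, in the spirit of the parameter $A_{\omega}$, or restricted to $\omega(G)\ge 4$, where the companion conjecture that such snarks have at least $44$ vertices gives ratio $11$) must be pinned down before any argument can even be attempted, and your proposal defers rather than resolves that choice. In short: the statement is open, its literal form is false for $P$, and the proposal does not contain a proof.
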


\begin{problem}
Let $\tau_1$ and $\tau_2$ be two complexity measures of snarks such that $\tau_1(G) \leq \tau_2(G)$ for all snarks $G$.
Does there exist a function $f$ such that $\tau_2(G) \leq f(\tau_1(G))$.
\end{problem}

That question was studied in \cite{s04}, where it is asked whether there is a function $f$ such that $\omega(G) \leq f(r(G))$
for each bridgeless cubic graph. Furthermore there is shown that there is no constant $c$ such that $1 \leq c < 2$ and $\omega(G) \leq cr(G)$,
for every bridgeless cubic class 2 graph $G$. We conjecture the following two statements to be true.

\begin{conjecture}
If $G$ is a bridgeless cubic graph, then $\omega'(G) \leq 2r(G)$.
\end{conjecture}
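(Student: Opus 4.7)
The plan is to fix a minimum proper $4$-edge-coloring $c$ of $G$ with color classes $M_0, M_1, M_2, M_3$, $|M_0| = r(G) = r$, and to construct an even factor $F^{\ast}$ of $G$ with at most $2r$ odd components. Since each vertex of $G$ misses exactly one of the four colors, $V(G)$ partitions as $A_0 \sqcup A_1 \sqcup A_2 \sqcup A_3$ by missing color; here $A_0 = V(G) \setminus V(M_0)$ and $A_1 \cup A_2 \cup A_3 = V(M_0)$, so $|A_1| + |A_2| + |A_3| = 2r$. A preliminary structural observation, to be verified directly, is that by the minimality of $|M_0|$, every edge $xy \in M_0$ has its two endpoints in distinct $A_i$'s with $i \in \{1,2,3\}$: if both endpoints missed the same color $i$, we could recolor $xy$ with color $i$ and obtain a proper $4$-edge-coloring with fewer edges in $M_0$. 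This partitions $M_0 = U_{12} \sqcup U_{13} \sqcup U_{23}$ according to the unordered pair of missing colors at the endpoints, with $|U_{12}| + |U_{13}| + |U_{23}| = r$.

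Next, I would choose the pair $\{j,k\}$ maximizing $|U_{jk}|$ (so $|U_{jk}| \geq r/3$ by pigeonhole), let $i$ be the remaining color, and form the subgraph $H := M_j \cup M_k \cup U_{jk}$. A direct degree calculation shows that $H$ has maximum degree $2$, with degree-$1$ vertices being exactly those endpoints of $U_{ij} \cup U_{ik}$ that lie in $A_j \cup A_k$; these number $|U_{ij}| + |U_{ik}| = r - |U_{jk}|$. The components of $H$ are therefore cycles and at most $(r-|U_{jk}|)/2$ paths. Moreover, every odd cycle of $H$ must contain at least one $U_{jk}$-edge, since cycles in $M_j \cup M_k$ alone alternate the two colors and hence have even length; so the number of odd cycles of $H$ is at most $|U_{jk}|$.

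The remaining and most delicate step is to extend $H$ to a proper even factor $F^{\ast}$ while keeping the number of odd components bounded by $2r$. For each path of $H$, one would apply a Kempe chain swap in the color pair $\{i,j\}$ or $\{i,k\}$ (of the $3$-edge-coloring of $G \setminus M_0$) that either closes the path into a cycle or merges two paths into one cycle. Each swap reassigns some edges between the two involved colors and thereby increases an auxiliary ``uncolored'' set by at most one edge per path processed. The desired bound would then follow from a charging argument in which each odd component of $F^{\ast}$ is assigned to a distinct edge of $U_{jk}$ or of this auxiliary set, giving a total of at most $|U_{jk}| + (r-|U_{jk}|) + (r-|U_{jk}|) \leq 2r$ odd components. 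The main obstacle is to verify that successive Kempe swaps at distinct paths do not destructively interfere --- for example, by splitting a previously even cycle into two odd ones, or by undoing an earlier path-closure --- and that the resulting charging is indeed injective. This generalizes the case analysis of Theorem~\ref{r=3} (the $r=3$ case) and is expected to be the technical heart of the proof; an alternative approach would be to exploit the Klein four-group structure of the Boole colorings developed in Section~\ref{Boole_colorings} in order to treat the three pair-choices $\{j,k\}$ uniformly, rather than singling out one via pigeonhole.
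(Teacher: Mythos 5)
There is no proof of this statement in the paper to compare against: it appears in Section~\ref{Section_Problems} as one of the authors' own open conjectures (``We conjecture the following two statements to be true''), and the surrounding discussion makes clear that even the existence of \emph{any} function $f$ with $\omega(G)\leq f(r(G))$ is open, while the fact that no constant $c<2$ works shows the factor $2$ would be sharp. So what you are attempting is a new result, and your proposal does not establish it.

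Your setup is sound as far as it goes: in a minimal proper $4$-edge-coloring every edge of the small color class $M_0$ does join vertices missing two distinct colors from $\{1,2,3\}$ (the recoloring argument is correct, and this is the classification of uncolored edges from \cite{s98} alluded to after Lemma~\ref{parity lemma}), and your degree count for $H=M_j\cup M_k\cup U_{jk}$ and the bound of $|U_{jk}|$ on its odd cycles are both right. The genuine gap is exactly where you flag it: turning the $(r-|U_{jk}|)/2$ paths of $H$ into an even factor. A Kempe $(i,j)$- or $(i,k)$-swap is a \emph{global} recoloring along a chain; it can pass through cycles of $H$ already accounted for, change their parity or break them, and two swaps performed for different paths can share edges, so the ``at most one new chargeable edge per path'' claim and the injectivity of your charging are unsupported assertions, not consequences of anything you have shown. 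This is not a technicality to be routinely verified: Theorem~\ref{r=3} needs a full case analysis just to control the interaction of chains arising from \emph{three} uncolored edges, and no argument in the paper (or, to the authors' knowledge, elsewhere) controls it for general $r$. Until you can specify an order of swaps and prove non-interference --- or replace the swap procedure by something monotone, e.g.\ a potential-function or Boole-coloring argument as you suggest --- the proposal is a plausible plan of attack on an open problem, not a proof.
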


\begin{conjecture}
If $G$ is a snark, then $\omega(G) \leq 2r(G)$.
\end{conjecture}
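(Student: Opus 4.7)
The plan is to prove $\omega(G)\le 2r(G)$ by starting from a minimal proper 4-edge-coloring of $G$ and constructing a 2-factor with at most $2r(G)$ odd circuits. Let $c$ be such a coloring, with color classes $A_0,A_1,A_2,A_3$ and $|A_0|=r(G)$; set $H=G-A_0$, whose restricted coloring $M_i:=A_i\cap E(H)$ (for $i\in\{1,2,3\}$) is a proper 3-edge-coloring. Let $D$ be the set of endpoints of $A_0$; since $A_0$ is a matching, $|D|=2r(G)$. Each $v\in D$ has exactly two $H$-edges, coloured by a 2-subset of $\{1,2,3\}$ missing a unique color $\gamma(v)$, and I write $D_i:=\gamma^{-1}(i)$. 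Applying Lemma~\ref{parity lemma} to the 3-edge-coloring of $H$ yields that each $|D_i|$ is even, and $|D_1|+|D_2|+|D_3|=2r(G)$.

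For each $i\in\{1,2,3\}$, form the subgraph $F_i^{\ast}:=M_j\cup M_k$ of $H$, with $\{j,k\}=\{1,2,3\}\setminus\{i\}$. Every non-deficient vertex and every vertex of $D_i$ has degree $2$ in $F_i^{\ast}$, while every vertex of $D_j\cup D_k$ has degree $1$. Thus $F_i^{\ast}$ decomposes into $p_i:=\tfrac{1}{2}(|D_j|+|D_k|)=r(G)-|D_i|/2$ paths with endpoints in $D_j\cup D_k$, together with some even circuits; note $\sum_{i} p_i=2r(G)$. The plan is to close the paths of $F_i^{\ast}$ into a 2-factor $F_i$ of $G$ by adjoining edges of $A_0$ joining path endpoints in $D_j\cup D_k$ while avoiding $D_i$. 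Since $A_0$ is a matching and its added edges induce a 2-regular multigraph on the $p_i$ paths, the circuits of $F_i$ built from these paths number at most $p_i$; the even circuits of $F_i^{\ast}$ contribute only even circuits to $F_i$. Hence $\omega(G)\le p_i\le\sum_{j} p_j=2r(G)$.

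The main obstacle is that the closing step is feasible only when every $A_0$-edge has both endpoints in the same side of the partition $D_i$ versus $D_j\cup D_k$, and this may fail for all three choices of $i$ simultaneously. To surmount this I would modify the 4-edge-coloring via Kempe-chain swaps: given a \emph{crossing} $A_0$-edge joining $D_i$ and $D_{i'}$ with $i\ne i'$, a suitable Kempe swap involving the colors $0$ and $i$ (or $i$ and $i'$) reassigns one of its endpoints to another $D$-block. Minimality of $|A_0|=r(G)$ forces any such Kempe chain to have even length, since an odd-length chain would strictly decrease $|A_0|$; hence the value $r(G)$ is preserved throughout. The technical crux is to identify a potential function on the partition $D_1\sqcup D_2\sqcup D_3$ --- for instance, the number of crossing $A_0$-edges --- that strictly decreases under carefully chosen swaps, so the process terminates in a coloring admitting the direct extension for at least one $i$. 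The snark hypothesis (cyclic 4-edge-connectivity and girth $\ge 5$) is invoked here to rule out the short degenerate Kempe chains that could otherwise block the reduction; once termination is reached, the counting above yields $\omega(G)\le p_i\le 2r(G)$, while averaging over the three choices of $i$ in fact gives the stronger bound $\omega(G)\le\lfloor 2r(G)/3\rfloor$.
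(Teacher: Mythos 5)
The statement you are addressing is posed in the paper as an open conjecture; no proof is offered there, so the only question is whether your argument settles it, and it does not. Your setup is sound as far as it goes: the minimal $4$-edge-coloring, the partition $D_1\sqcup D_2\sqcup D_3$ of the $2r(G)$ endpoints of $A_0$ according to the missing color in $H=G-A_0$, the evenness of each $|D_i|$ via Lemma~\ref{parity lemma}, and the decomposition of $F_i^{\ast}=M_j\cup M_k$ into $p_i=r(G)-|D_i|/2$ paths plus even circuits are all correct. But the entire second half is a placeholder: you never exhibit the potential function or the specific Kempe swaps that would eliminate crossing $A_0$-edges, and the assertion that minimality of $|A_0|$ forces the relevant $(0,i)$- or $(i,i')$-chains to behave (even length, $r(G)$ preserved, controlled movement of endpoints between the blocks $D_\ell$) is unsubstantiated.

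More seriously, the target state of your reduction is provably unreachable in general, so no choice of potential function can rescue the plan. If the closing step succeeded for even one index $i$, the resulting $2$-factor would give $\omega(G)\le p_i=r(G)-|D_i|/2\le r(G)$ (note that $p_i\le r(G)$ always, which is far stronger than your stated bound $p_i\le\sum_j p_j=2r(G)$). But $\omega(G)\le r(G)$ is false in general: the paper quotes Steffen's result that no constant $c<2$ satisfies $\omega(G)\le c\,r(G)$ for all bridgeless cubic class~2 graphs, and Theorem~\ref{weak_oddness_vs_oddness} records a concrete graph with $r(G)=12$ and $\omega(G)=16$. Your concluding ``stronger bound'' $\omega(G)\le\lfloor 2r(G)/3\rfloor$ already fails for the Petersen graph, where $r=2$ and $\omega=2>\lfloor 4/3\rfloor=1$ (indeed it would force $\omega=0$ by parity, i.e.\ that the Petersen graph is class~1). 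The conclusion is that for such graphs every minimal coloring must retain crossing $A_0$-edges for every index $i$ with $p_i$ too small; the crossing edges are not a removable technicality but the essential obstruction, and the conjecture remains open.
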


It might be interest to extend the definition of Mohar and \v{S}krekovski in \cite{ms01}.
Let $\Phi^+_r(G)$ be the minimum number of edges that
have to be added to a cubic graph $G$ in order to obtain a graph with nowhere-zero $r$-flow ($r < 6$).

\begin{problem}
Does there exist $s$ with $4 < s < 5$ such that $\Phi^+_r(G) = \Phi^+_4(G)$ for all $r$ with $4 < r < s$?
\end{problem}

\subsection*{Acknowledgements} Research supported by the
{\em Ministerio de Ciencia e Innovaci\'on}, Spain, and the
{\em European Regional Development Fund} under project MTM2014-60127-P,
and the {\em Catalan Research Council} under project 2014SGR1147.
The research of the third author on this project was supported by Deutsche Forschungsgemeinschaft (DFG) grant STE 792/2-1.
%The authors wish to thank ....

\end{document}